\newcolumntype{P}[1]{>{\centering\arraybackslash}p{#1}}
\newcolumntype{M}[1]{>{\centering\arraybackslash}m{#1}}
\newcommand\blfootnote[1]{
    \begingroup
    \renewcommand\thefootnote{}\footnote{#1}
    \addtocounter{footnote}{-1}
    \endgroup
}
\definecolor{mygray}{rgb}{0.9,0.9,0.9}
\newtcolorbox{mybox}{
	arc=0pt,
	boxrule=0pt,
	colback=mygray,
	width=1\textwidth,   % this option controls the width of the box
	colupper=black,
	fontupper=\bfseries
}
\newcommand{\xdownarrow}[1]{%
	{\left\downarrow\vbox to #1{}\right.\kern-\nulldelimiterspace}
}
\newcommand{\xuparrow}[1]{%
	{\left\uparrow\vbox to #1{}\right.\kern-\nulldelimiterspace}
}
\newtheorem{theorem}{Theorem}[section]
\newtheorem{corollary}[theorem]{Corollary}
\newtheorem{definition}[theorem]{Definition}
\newtheorem{example}[theorem]{Example}
\newtheorem{lemma}[theorem]{Lemma}
\newtheorem{proposition}[theorem]{Proposition}
\newtheorem{remark}[theorem]{Remark}
\newenvironment{proof}[1][Proof]{\noindent\textbf{#1.} }{\ \rule{0.5em}{0.5em}}
\title{\textbf{On reflection maps from the n-space to the n+1-space}}
\author{ \ \ \ \\{Gama, M.B. $ \ \ $ and  $ \ \ $ Silva, O.N. $ \ \ $}}
\date{}
\begin{document}
	
	\maketitle
	
	\begin{abstract}
		
		In this work we consider some problems about a reflected graph map germ $f$ from $(\mathbb{C}^n,0)$ to $(\mathbb{C}^{n+1},0)$. A reflected graph map is a particular case of a reflection map, which is defined using an embedding of $\mathbb{C}^n$ in $\mathbb{C}^{p}$ and then applying the action of a reflection group $G$ on $\mathbb{C}^{p}$. In this work, we present a description of the presentation matrix of $f_*{\cal O}_n$ as an ${\cal O}_{n+1}$-module via $f$ in terms of the action of the associated reflection group $G$. We also give a description for a defining equation of the image of $f$ in terms of the action of $G$. Finally, we provide an upper (and also a lower) bound for the multiplicity of the image of $f$ and some applications.
	\end{abstract}

\textbf{Keywords}: \textit{Singularity, Reflection map, Reflection group, Presentation Matrix.}
	
	\section{Introduction}
	
	$ \ \ \ \ $ In this work we consider some problems on reflection maps from $\mathbb{C}^n$ to $\mathbb{C}^{n+1}$. Reflection maps have recently emerged in the literature and have proven to be a very interesting subject for singularity theory, with interesting properties and challenging problems to be explored. These kind of maps were introduced by Peñafort-Sanchis in \cite{[5]} where several interesting  problems are considered such as Lê's Conjecture, normal crossings and $\mathcal{A}$-finite determinacy. Reflection maps were also used to produce the first known counterexample of Ruas's conjecture (\cite{[6b]} see also \cite{[10]}). In order to explain what a reflection map is, let us first introduce some ingredients and notation. \blfootnote{2020 Mathematics Subject Classification. Primary MSC 32S05 - MSC 14JS17.}
	
	Let $GL(\mathbb{C}^{p})$ be the group of all invertible linear maps from $\mathbb{C}^{p}$ to $\mathbb{C}^p$ (the general linear group), and $U(\mathbb{C}^p)$ the group of unitary automorphisms of $\mathbb{C}^p$. A \textit{reflection} in $\mathbb{C}^p$ is a linear map $g:\mathbb{C}^p \longrightarrow \mathbb{C}^p$ which is unitary, has finite order (as an element of $GL(\mathbb{C}^p)$) and the set of points fixed by the action of $g$ has dimension $p-1$. A finite subgroup $G$ of $U(\mathbb{C}^p)$ is a (unitary) reflection group if it is generated by reflections. The cyclic group $\mathbb{Z}_d$ and the dihedral ${D}_{2m}$ are typical examples of reflection groups (as long as they are considered with convenient representations in $GL(\mathbb{C}^p)$, see Remark \ref{obs2}). There is a vast theory regarding reflection groups, and we will only explain in this work what we need to get our results. A general reference for the topic is \cite{[2]} (see also \cite{[2b]}) where one can find a detailed description of reflection groups and also the classification of irreducible reflection groups obtained by Shephard-Todd in 1954 (see \cite{[6]}). 
	
	\begin{figure}[H]
		\centering
		\includegraphics[width=0.7\linewidth]{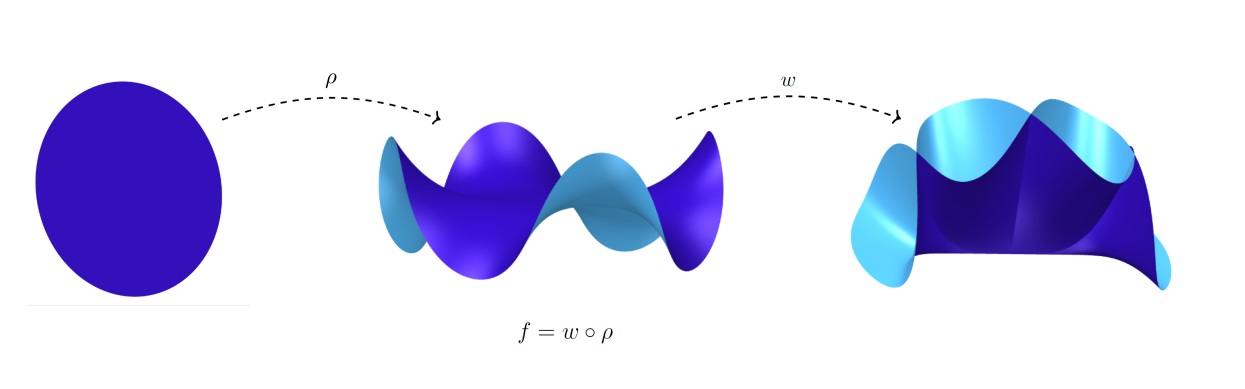}
		\caption{The $C_5$ singularity viewed as a reflection map}
		\label{fig:exe2}
	\end{figure}
	
In few words, a \textit{reflection map} $f:\mathbb{C}^n \longrightarrow \mathbb{C}^p$ is defined by Peñafort-Sanchis in \cite{[5]} simply as the composition of the orbit map  $w:\mathbb{C}^p\longrightarrow \mathbb{C}^p$ of the group $G$ (see \cite[Def. 9.2]{[2]}) with an embedding $\rho:\mathbb{C}^n \hookrightarrow \mathbb{C}^p$, i. e., $f=w\circ \rho$.	
	
	We remark that according to Peñafort-Sanchis, the formulation of a precise definition of a reflection map is attributed to Fernández de Bobadilla.  A reflected graph is a particular case of a reflection map. These kind of maps were also introduced by Peñafort-Sanchis in \cite{[5]} and play a very important role in the theory of reflection maps. For instance, Peñafort-Sanchis shows that with a certain hypothesis any reflection map germ is $\mathcal{A}$-equivalent to a reflected graph map germ (see \cite[Prop. 3.2]{[5]}). Let $h: \mathbb{C}^p \longrightarrow \mathbb{C}^r$ be any holomorphic map and $G$ a reflection group acting on $\mathbb{C}^p$. In few words, we can say that a reflected graph map $(w,h): \mathbb{C}^p\longrightarrow \mathbb{C}^{p+r}$ is the reflection map obtained by taking the embedding of $h$ given by $x \longmapsto(x, h(x))$, and letting $G$ act on $\mathbb {C}^p \times \mathbb{C}^r$, trivially on the second factor. 
	
	The objective of this work is to study some problems about reflected graph map germs from $(\mathbb{C}^n,0)$ to $(\mathbb{C}^{n+1},0)$, where $h$ is a map germ from $(\mathbb{C}^n,0)$ to $(\mathbb{C},0)$. Before we describe the problems we will deal with, let us look at some historical cases.
	
	The simplest class of examples of reflected graph maps are the fold maps. For instance, the $C_5$-singularity of Mond's list \rm(\cite[p.378]{[8]}) defined by $f(x,y)=(x,y^2,xy^3-x^5y)$ (see Figure \ref{fig:exe2}). Historically, fold maps are the first examples of reflection maps that we find in the literature. The first people to study the subject were Bruce in \cite{bruce} and Mond in \cite{[9]}. A fold map germ $f:(\mathbb{C}^2,0) \longrightarrow (\mathbb{C}^3,0)$ is a reflected graph map germ where $\rho: (\mathbb{C}^2,0) \hookrightarrow (\mathbb{C}^3,0)$ is defined by $\rho(x,y)=(x,y,h(x,y))$ and $w: (\mathbb{C}^3,0) \longrightarrow (\mathbb{C}^3,0)$ is the orbit map $w(X,Y,Z)=(X,Y^2,Z)$ of the group $\mathbb{Z}_2 \times G_{id}$, where $G_{id}$ is trivial group.  
	
	Later, in 2008, Marar and Nuño-Ballesteros introduced in \cite{[3]} the double fold maps. These maps are similar to folds maps, but for this class, $w$ is the orbit map of the group $\mathbb{Z}_2 \times \mathbb{Z}_2$ and $f=w \circ \rho$ takes the form $f(x,y)=(x^2,y^2,h(x,y))$. A detailed study of double fold maps is given in \cite{guille}. A typical example of a double fold map germ is the one given by $f(x,y)=(x^2,y^2,x^3+y^3+xy)$ (see Figure \ref{fig:hh}).
	
	\begin{figure}[H]
		\centering
		\includegraphics[width=0.7\linewidth]{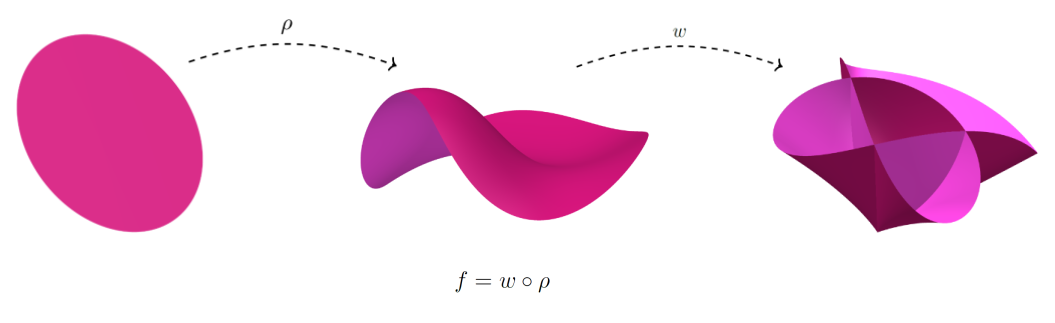}
		\caption{The reflection map $f(x,y)=(x^2,y^2,x^3+y^3+xy)$ (real points).}
		\label{fig:hh}
	\end{figure}
	
	Note that the image of $f$ above is a singular surface in $\mathbb{C}^3$. If $(X,Y,Z)$ denotes a system of coordinates in $\mathbb{C}^3$ then a defining equation of the image of $f$ is 
	\begin{equation}\label{eq7}
		X^2Y^2-2XYZ^2+Z^4-2X^4Y-2XY^4-8X^2Y^2Z-2X^3Z^2-2Y^3Z^2+X^6-2X^3Y^3+Y^6=0.
	\end{equation}
	
	We remark that determining a defining equation of the image of a map germ in general is not an easy task. In this way, let $G$ be a reflection group acting in $\mathbb{C}^n \times \mathbb{C}$, trivially on the second factor, through an orbit map $w=(w_1,\cdots,w_n,Z)$. Let $h:(\mathbb{C}^n,0)\rightarrow (\mathbb{C},0)$ be a holomorphic function. Consider a reflected graph map germ $f:(\mathbb{C}^n,0)\rightarrow (\mathbb{C}^{n+1},0)$, $f(\textit{\textbf{x}})=(w_1(\textit{\textbf{x}}),\cdots,w_n(\textit{\textbf{x}}),h(\textit{\textbf{x}}))$, where $\textit{\textbf{x}}=(x_1,x_2,...,x_n)$. We have that the action of $G$ on $\mathbb{C}^n \times \mathbb{C}$ induces an action on the ring of analytic functions ${\cal O}_{n+1}$ (which is isomorphic to $\mathbb{C}\{X_1,X_2,...,X_n,Z\}$, the ring of convergent series in a neighborhood of the origin). Thus a natural question in this setting is: 
	
	\begin{mybox}
		\textbf{Question 1:}  How can we describe a defining equation of the image of a reflected graph map germ $f:(\mathbb{C}^n,0) \longrightarrow (\mathbb{C}^{n+1},0)$ in terms of the action of $G$ on $h$?
	\end{mybox}
	
	When $n=2$, a defining equation of the image of a reflection map is described in \cite{[1]} in the case of the cyclic groups $\mathbb{Z}_2$, $\mathbb{Z}_3$, $\mathbb{Z}_4$ and the group $\mathbb{Z}_2 \times \mathbb{Z}_2$. In \cite[Ch. 4]{[10]} the case of the groups $\mathbb{Z}_2 \times \mathbb{Z}_3$, $\mathbb{Z}_3 \times \mathbb{Z}_5$ and $\mathbb{Z}_4$ is considered. However, none of these works describe a defining equation of the image in terms of the action of $G$ on $h$.\\

	Also in the case where $n=2$, Marar and Nuño-Ballesteros present a defining equation of the image of $f$ in the case of the group $\mathbb{Z}_2 \times \mathbb{Z}_2$ (see the proof of \cite[Prop. 3.1]{[3]}). The technique they use is to consider the fact that one can see the pushforward $f_{\ast}\mathcal{O}_2$ as an $\mathcal{O}_3$-module, via composition with $f$. In the sequence, using Mond-Pellikaan algorithm (see \cite{[12]})  they determine a presentation matrix of $f_{\ast}\mathcal{O}_2$ and then they find a defining equation of the image as the $0$-Fitting ideal of $f_{\ast}\mathcal{O}_2$, which is simply the determinant of the presentation matrix of $f_{\ast}\mathcal{O}_2$. For instance, a presentation matrix of $f=(x^2,y^2,x^3+y^3+xy)$ (with relation to the basis $1,x,y,xy$) is given by 
	
	$$M= \begin{bmatrix}
		-Z & X & Y & 1 \\
		X^2 & -Z & X & Y \\
		Y^2 & Y & -Z & X \\
		XY & Y^2 & X^2 & -Z \
	\end{bmatrix}.$$
	
	Note that the defining equation of $f=(x^2,y^2,x^3+y^3+xy)$ described in (\ref{eq7}) is precisely the determinant of the matrix $M$ above. Thus a natural question in this setting is: 
	
	\begin{mybox}
		\textbf{Question 2:} Let  $f(\textit{\textbf{x}})=(w(\textit{\textbf{x}}),h(\textit{\textbf{x}}))$ be a reflected graph map germ as in Question $1$. How can we describe the presentation matrix of the pushforward $f_{\ast}\mathcal{O}_n$ in terms of the action of $G$ on $h$?
	\end{mybox}
	
	Note that finding the presentation matrix of $f_{\ast}\mathcal{O}_n$ may not be an easy task. For the computations one can use the software {\sc Singular} \rm\cite{singular} and the implementation of Mond-Pellikaan's algorithm given by Hernandes, Miranda, and Pe\~{n}afort-Sanchis in \rm\cite{aldicio}. However, depending on the complexity of $f$, even with the help of a regular computer, the calculations can take days and in some cases cannot be completed due to lack of computer memory. This is the case, for example, of maps whose coordinate functions have high multiplicities. 
	
	In this work we answer both questions. For Question $2$, in a few words, we describe the presentation matrix of $f_{\ast}\mathcal{O}_n$ as a product of three matrices, whose entries depend on the action of $G$ on $h$ (see Theorem \ref{c3}). We note that one importance consequence of obtaining a presentation matrix of $f_*{\cal O}_n$ is the fact that the Fitting ideals give a convenient analytic structure not only for the image of $f$ but also for the image $M_k(f)$ of the (source) multiple points $D^k(f)$ of $f$ (see \cite{[12]}). 
	
	For Question $1$, we consider a reflection group $G$ of order $d$ and a reflected graph map $f(\textit{\textbf{x}})=(w(\textit{\textbf{x}}),h(\textit{\textbf{x}}))$ as in Question $1$. We present the following result (see Theorem \ref{teeeo}).
	
	\begin{theorem}\label{teeeo1} A defining equation of the image of $f(\textit{\textbf{x}})=(w(\textit{\textbf{x}}),h(\textit{\textbf{x}}))$ (given by the $0$-Fitting ideal of the presentation matrix \textcolor{blue}{of} $f_*{\cal O}_n$) is given by the following alternating sum 
		\begin{equation}\label{ima}
			F(X_1,...,X_n,Z)= Z^d-Q_{d-1}Z^{d-1}+Q_{d-2}Z^{d-2}+\cdots+(-1)^{d-1}Q_1Z+(-1)^dQ_0.
		\end{equation}
	\end{theorem}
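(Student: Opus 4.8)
The plan is to read off the defining equation as the determinant of the presentation matrix of $f_*\mathcal{O}_n$ and then identify that determinant with the claimed alternating sum by exploiting the cyclic/regular structure that the group action imposes on the matrix. First I would invoke the description of the presentation matrix obtained in Theorem \ref{c3}: since $G$ has order $d$, the module $f_*\mathcal{O}_n$ is generated as an $\mathcal{O}_{n+1}$-module by $d$ elements coming from a basis of $\mathbb{C}[\textit{\textbf{x}}]$ over the ring of invariants $\mathbb{C}[w_1,\dots,w_n]$, and multiplication by the last coordinate $Z$ acts on these generators via $h$. Concretely, $Z$ acts as multiplication by $h$, and since the $G$-orbit of $h$ consists of the conjugates $h=g_1\cdot h,\ g_2\cdot h,\dots,g_d\cdot h$, the operator "multiplication by $h$" on $f_*\mathcal{O}_n$ satisfies the characteristic polynomial $\prod_{i=1}^d (T - g_i\cdot h)$, whose coefficients are the elementary symmetric functions of the orbit; these are $G$-invariant, hence expressible in $X_1,\dots,X_n$, and I would define $Q_{d-j}$ to be (up to sign) the $j$-th elementary symmetric function $e_j(g_1\cdot h,\dots,g_d\cdot h)$ written in the invariant coordinates, with $Q_d=1$.

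Next I would argue that the $0$-th Fitting ideal of $f_*\mathcal{O}_n$ is generated by $\det(Z\cdot \mathrm{Id} - [\text{mult. by } h])$, i.e. by the characteristic polynomial of the $Z$-multiplication operator. This is the standard fact (used in \cite{[3]} and \cite{[12]}) that for a map germ of this type the presentation matrix can be taken to be $Z\cdot\mathrm{Id}$ minus the matrix of multiplication by the target coordinate, so its determinant is exactly $\det(Z\cdot\mathrm{Id}-M_h)$. Expanding this determinant gives $Z^d - e_1 Z^{d-1} + e_2 Z^{d-2} - \cdots + (-1)^d e_d$ where $e_j$ are the elementary symmetric functions of the eigenvalues; identifying the eigenvalues with the orbit $\{g_i\cdot h\}$ (which holds because $\mathbb{C}[\textit{\textbf{x}}]$ is free over $\mathbb{C}[w]$ and the orbit map realizes the regular representation up to the embedding) yields precisely \eqref{ima} with the stated signs and the $Q_j$ as above.

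The main obstacle, and the step I would spend the most care on, is justifying that the multiplication-by-$h$ operator on $f_*\mathcal{O}_n$ really has characteristic polynomial equal to the orbit polynomial $\prod_i(T-g_i\cdot h)$ — equivalently, that no "extra" factors or multiplicities appear and that the coefficients genuinely descend to polynomials in $X_1,\dots,X_n$ rather than merely lying in the fraction field. For this I would lean on the structure of reflected graph maps: the embedding $\rho$ is a graph over $\mathbb{C}^n$, so $f$ is finite of degree $d$, $f_*\mathcal{O}_n$ is a free $\mathcal{O}_{n+1}$-module away from the branch locus of rank $d$, and the trace form / resultant computation identifies $F$ with the norm $N_{\mathbb{C}(\textit{\textbf{x}})/\mathbb{C}(w)}(Z-h)$, which is a monic polynomial in $Z$ with invariant coefficients and hence polynomial in the $X_i$ by the Chevalley–Shephard–Todd theorem. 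Once this identification is in place, comparing it with the generic determinant expansion of the presentation matrix from Theorem \ref{c3} finishes the proof; the remaining bookkeeping (matching signs $(-1)^j$ and naming the $Q_j$) is routine.
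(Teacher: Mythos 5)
Your proposal is correct and follows the same overall skeleton as the paper's argument: the defining equation is the determinant of the Mond--Pellikaan presentation matrix, which (up to sign) is the characteristic polynomial of the multiplication-by-$h$ operator on the coinvariant basis $r_1,\dots,r_d$; after pulling back by $w$ this factors as $\prod_{k}(Z-{g_k}_{\bullet}h)$, and the coefficients, being symmetric functions of the $G$-orbit of $h$, are $G$-invariant and hence descend to polynomials $Q_{d-k}$ in the target variables (this is the content of Lemma \ref{LEMMA} and Theorem \ref{teeeo}). The one place where you genuinely diverge is the justification of the factorization itself, which is the technical heart of the paper (Lemma \ref{prim}): there the authors exhibit, for each $g_k\in G$, the explicit eigenvector $({g_k}_{\bullet}r_1,\dots,{g_k}_{\bullet}r_d)$ of the pulled-back matrix $\alpha[\textbf{w}]$ with eigenvalue ${g_k}_{\bullet}h$, and invoke Gutkin's theorem to show the eigenvector matrix $E$ is nonsingular, so that all $d$ eigenvalues are accounted for. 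You instead identify $F(\textbf{w},Z)$ with the norm $N_{\mathbb{C}(\textbf{x})/\mathbb{C}(\textbf{w})}(Z-h)=\prod_{g\in G}\left(Z-g_{\bullet}h\right)$, using that $\mathbb{C}[\textbf{x}]$ is free of rank $d=|G|$ over $\mathbb{C}[\textbf{w}]$ and that the extension of fraction fields is Galois with group $G$. Both routes are valid; yours is shorter and bypasses Gutkin's theorem, whereas the paper's eigenvector computation is reused to obtain the explicit diagonalization $\lambda[\textbf{X},Z]=w^{\ast}(E\cdot A_Z\cdot E^{-1})$ of Theorem \ref{c3}. One small caution: in the paper Theorem \ref{c3} is itself a consequence of Lemma \ref{prim}, so you should not cite Theorem \ref{c3} for the shape of the presentation matrix and then use that shape to reprove the factorization; what you actually need is only the Mond--Pellikaan form $\alpha[\textbf{X}]-Z\,Id$, which is available independently (as you note later), so the circularity is avoidable with a slight rewording. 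Your direct observation that the invariance of the $q_{d-k}$ follows because $G$ permutes the orbit $\{h_1,\dots,h_d\}$ is also cleaner than the paper's detour through power sums and the Reynolds operator in Lemma \ref{LEMMA}.
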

	
	The description of $Q_{d-k}$ in Theorem \ref{teeeo1} is given in terms of a $G$-invariant polynomial $q_{d-k}(x_1,...,x_n)$ which can be seen as a symmetric polynomial in the variables $h_1,h_2,...,h_d$, where $h_i$ denotes the action of an element $g_i$ of $G$ on $h$ (see Lemma \ref{LEMMA}). We remark that another way to obtain the defining equation of the image of a reflection map is presented in \cite{[rafael]} using sections of the orbit map which is a different technique from the one we use in this work. 
	
	As a corollary of Theorem \ref{teeeo1} we obtain an upper bound for the multiplicity of $f$, where $f$ is a reflected graph map germ. More precisely, write
	
	\begin{equation*}
		F=F_m+F_{m+1}+\cdots+F_k+\cdots
	\end{equation*}
	
	\noindent where each $F_k$ is a homogeneous polynomial of degree $k$ and $F_m \neq 0$. The integer $m$ is called the multiplicity of $\textbf{V}(F)$ in 0 and is denoted by $m(\textbf{V}(F),0)$. Note that by the defining equation (\ref{ima}) in Theorem \ref{teeeo1} we obtain as a corollary that $m(f(\mathbb{C}^n),0)\leq d$, where $d$ is the order of the reflection group $G$. Now, consider a reflection map germ (not necessarily a reflected graph one) $f:(\mathbb{C}^n,0)\rightarrow (\mathbb{C}^{n+1},0)$. A natural question is:
	
	\begin{mybox}
		\textbf{Question 3:} For a fixed (representation of) the group $G$, consider a reflection map germ $f:(\mathbb{C}^n,0)\rightarrow (\mathbb{C}^{n+1},0)$. How big can the multiplicity of the image of $f$ be? In other words, is there an upper bound for the multiplicity of the image of a reflection map?
	\end{mybox}
	
	In this work, we give an answer to Question $3$. An important set of invariants of a reflection group $G$ acting on $\mathbb{C}^{n+1}$ is the set of degrees of $G$, denoted by $\{d_1,d_2,...,d_{n+1}\}$ (see \cite[Prop. 3.25]{[2]}). Ordering the degrees of $G$ in the form $d_1\leq d_2 \leq \cdots \leq d_{n+1}$, we show (see Theorem \ref{teo5.2}) that 
	
	\begin{center}
		$d_1 d_2 \cdot ... \cdot d_n \leq m(f(\mathbb{C}^n),0) \leq d_2 d_3 \cdot ... \cdot d_{n+1}$.
	\end{center}
	
	Finally, as an application of our results, in the case of reflected graph map germs we present another way to obtain a defining equation of the double point hypersurface of $f$ different from the one obtained by in Borges Zampiva, Peñafort-Sanchis, Oréfice Okamoto and Tomazella in \cite[Th. 5.2]{[rafael]}. 
	
	In order to illustrate our results we introduce in Section \ref{dihedral} a new class of map germs called ``\textit{$2m$-dihedral map germs}''. We show the presentation matrix \textcolor{blue}{of} the pushforward $f_{\ast}\mathcal{O}_2$ and a defining equation of the image of a $6$-dihedral map germ in an explicit way (see Proposition \ref{dihedralprop}). In \cite[Th. 3.4]{[3]}, Marar and Nuño-Ballesteros showed that there are no finitely determined quasihomogeneous (with distinct weights) double fold map germs, i.e, $f$ is a reflection graph map germ with reflection group $\mathbb{Z}_2 \times \mathbb{Z}_2$. We finish this work presenting an extension of this result to the group $\mathbb{Z}_r \times \mathbb{Z}_s$, with $r,s\geq 2$ (see Lemma \ref{homo}).
	
	\section{Preliminaries}\label{sec1}
	
	$ \ \ \ \ $  Throughout the text, we assume that $f:(\mathbb{C}^n,0)\rightarrow(\mathbb{C}^{n+1},0)$ is a finite holomorphic map germ, unless otherwise stated. Moreover, $\textit{\textbf{x}}=(x_1,\cdots,x_n)$ and $(\textit{\textbf{X}},Z)=(X_1,\cdots,X_n,Z)$ are used to denote systems of coordinates in $\mathbb{C}^n$ (source) and $\mathbb{C}^{n+1}$ (target), respectively. We also use the standard notation of singularity theory as the reader can find in \cite{[7]}.
	\subsection{Reflection groups and reflection maps}\label{prel}
	
	$ \ \ \ \ $ Consider $GL(\mathbb{C}^p)$ the group of all invertible linear transformations of $\mathbb{C}^p$. Let $Id$ be the identity element of $GL(\mathbb{C}^p)$. A linear representation of a group $G$ with representation space $\mathbb{C}^p$ is a homomorphism $\psi:G \longrightarrow GL(\mathbb{C}^p)$. If $\psi: G \longrightarrow GL(\mathbb{C}^p)$ is a representation, we say that $G$ acts on $\mathbb{C}^p$ and we call $\mathbb{C}^p$ a $G$-module \cite{[serre]}. The action of $g\in G$ on $v \in \mathbb{C}^p$ is defined by $gv := \psi(g)v$ and we usually omit $\psi$ but denote $gv$ by $g_{\bullet}v$. For $g \in GL(\mathbb{C}^p)$, we have $Fix\,g:=\{v\in \mathbb{C}^p\,|\, g_{\bullet}v=v\}$.
	
	\begin{definition}\label{reflection} A reflection on $\mathbb{C}^p$ is a linear map $g:\mathbb{C}^p \longrightarrow \mathbb{C}^p$, satisfying:
		\begin{itemize}
			\item[$(i)$] $g$ is unitary.
			\item[$(ii)$] $g$ has finite order.
			\item[$(iii)$] $\dim Fix \ g=p-1$.
		\end{itemize}
	\end{definition}
	
	Let $U(\mathbb{C}^p)$ be the group of unitary automorphisms of $\mathbb{C}^p$. A subgroup $G$ of $U(\mathbb{C}^p)$ is said to be a reflection group if it is generated by reflections. If $g$ is a reflection, the subspace $Fix\, g$ is a hyperplane, called the \textit{reflecting hyperplane} of $g$.
	
	We note that the action of $G$ on $\mathbb{C}^p$ induces an action of $G$ on a holomorphic function, as we will see in the next definition.
	
	\begin{definition} Let $g \in GL(\mathbb{C}^p)$ and $P \in {\cal O}_p$ be a holomorphic function. We define the action of $g$ on $P$ by
		\begin{equation*}
			(g_{\bullet}P)(v):=P(g^{-1}(v)), \textrm{ for all } v\in \mathbb{C}^p.
		\end{equation*}
	\end{definition}
	
	We note that for any $g \in G$ the action of $g$ on $P,Q \in \mathcal{O}_p$ has the following property: $g_{\bullet}(PQ)=g_{\bullet}(P)g_{\bullet}(Q)$. We say that $P\in S$ is \textit{$G$-invariant} if $g_{\bullet}P=P$ for all $g \in G$. The \textit{algebra of invariants of $G$} is the algebra of $G$-invariant holomorphic functions
	
	\begin{center}
		$ {\cal O}_p^G=\{P \in  {\cal O}_p\, |\, g_{\bullet}P=P \textrm{ for all } g \in G\}.$
	\end{center}
		
	The following lemma can be found for instance in \cite[Lemma 3.17]{[2]}. It gives us a way to obtain the action of a reflection of $G$ on an element of ${\cal O}_p$.
	
	\begin{lemma}\label{li} If $g$ is a reflection in $GL(\mathbb{C}^p)$ and if $Fix\,g$ is its reflecting hyperplane, with $Fix\,g=Ker\,L_g$. Then for all $P \in {\cal O}_p$ there exists $Q \in {\cal O}_p$ such that
		\begin{equation*}
			g_{\bullet}P=P+L_{g}Q.
		\end{equation*}
	\end{lemma}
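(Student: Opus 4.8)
The plan is to show that the polynomial $g_{\bullet}P-P$ vanishes identically on the hyperplane $H$ and then to deduce divisibility by the linear form $L_H$. First I would note that, $g$ being a reflection, $H=Fix\,g$ is fixed pointwise by $g$; since $g$ is invertible of finite order, $Fix\,g^{-1}=Fix\,g$, so $g^{-1}$ also fixes $H$ pointwise. Hence for every $v\in H$ one has $(g_{\bullet}P)(v)=P(g^{-1}(v))=P(v)$, that is, $(g_{\bullet}P-P)(v)=0$ for all $v\in H$.

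Next I would pass from ``vanishes on $H$'' to ``divisible by $L_H$''. After a linear change of coordinates we may assume $L_H=X_p$; writing $g_{\bullet}P-P$ as a polynomial in $X_p$ with coefficients in $\mathbb{C}[X_1,\dots,X_{p-1}]$ and setting $X_p=0$, the vanishing on $H=\{X_p=0\}$ forces the $X_p$-constant term to be the zero polynomial, so $X_p$ divides $g_{\bullet}P-P$. Reverting the coordinate change produces $Q\in S$ with $g_{\bullet}P-P=L_HQ$, i.e.\ $g_{\bullet}P=P+L_HQ$. (Equivalently, one may invoke the Nullstellensatz together with the fact that the ideal $(L_H)$ is prime, hence radical.)

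For the final assertion I would verify directly from the definition that $P\mapsto g_{\bullet}P=P\circ g^{-1}$ is $\mathbb{C}$-linear and multiplicative, since $(g_{\bullet}(PR))(v)=(PR)(g^{-1}(v))=(g_{\bullet}P)(v)\cdot(g_{\bullet}R)(v)$, and that it maps homogeneous polynomials of degree $k$ to homogeneous polynomials of degree $k$, precomposition with the linear isomorphism $g^{-1}$ preserving total degree; combined with $(gh)_{\bullet}=g_{\bullet}\circ h_{\bullet}$ and $Id_{\bullet}=\mathrm{id}_{S}$, this exhibits the action of $G$ on $S$ as a degree-preserving action by algebra automorphisms.

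The only delicate step — and it is a mild one — is the implication ``$g_{\bullet}P-P$ vanishes on $H$'' $\Rightarrow$ ``$L_H\mid g_{\bullet}P-P$''; this is precisely where one uses that $L_H$ cuts out $H$ with multiplicity one (equivalently, that $(L_H)$ is radical), and reducing to the coordinate case $L_H=X_p$ makes it transparent. The remaining verifications are routine unwindings of the definition of $g_{\bullet}$.
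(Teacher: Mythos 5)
Your proof is correct: the two key observations --- that $g_{\bullet}P-P$ vanishes identically on $H$ because $g^{-1}$ fixes $H$ pointwise, and that a polynomial vanishing on the hyperplane $\mathrm{Ker}\,L_H$ is divisible by the linear form $L_H$ (via the coordinate change reducing to $L_H=X_p$) --- constitute exactly the standard argument, and the verification of linearity, multiplicativity and degree preservation is routine and accurate. The paper itself gives no proof, citing only Lemma 3.17 of Lehrer--Taylor, and your argument is essentially the one found there, so there is nothing to reconcile.
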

	
	It follows from Lemma \ref{li} that if $g$ is a reflection, then $\dfrac{P-g_{\bullet}P}{L_g}=-Q \in {\cal O}_p$. So we can consider the operator $\Delta_{g}:{\cal O}_p \longrightarrow {\cal O}_p$ given by
	\begin{equation*}
		\Delta_{g}(P)=\dfrac{P-{g}_{\bullet}P}{L_g}
	\end{equation*}
	where $g$ is a reflection of $G$. This operator is also known as \textit{Demazure operator}.\\
	
	Consider now the operator $Av:{{\cal O}_p} \longrightarrow {\cal O}_p^G$, given by
	\begin{equation*}
		Av(P):=\dfrac{1}{|G|}\left( \sum_{g \in G} g_{\bullet}P\right).
	\end{equation*}

	This operator is also known as \textit{Reynolds operator}. It is clear from the definition that $Av(P) \in {\cal O}_p^G$ and that $Av(P)$ is either $0$ or has the same degree as $P$. Moreover, for $P \in {\cal O}_p^G$ we have $Av(P) = P$ and therefore $Av^2 = Av$. Thus $Av$ is a projection of ${\cal O}_p$ onto ${\cal O}_p^G$. In fact, a somewhat stronger statement is true, namely that for $P \in {\cal O}_p^G$ and $Q \in S$ we have $Av(PQ) = P Av(Q)$ so that $Av$ is a ${\cal O}_p^G$-module homomorphism.\\
	
	The orbit map $w$ of a group $G$ acting on $\mathbb{C}^p$ determines a way of ``folding''\,$\mathbb{C}^p$, gluing an orbit of $G$ to a point \cite{[5]}. Figure \ref{figura6} illustrates a geometrical idea of the notion of a reflection map.
	
	\begin{figure}[H]
		\centering
		\includegraphics[width=0.7\linewidth]{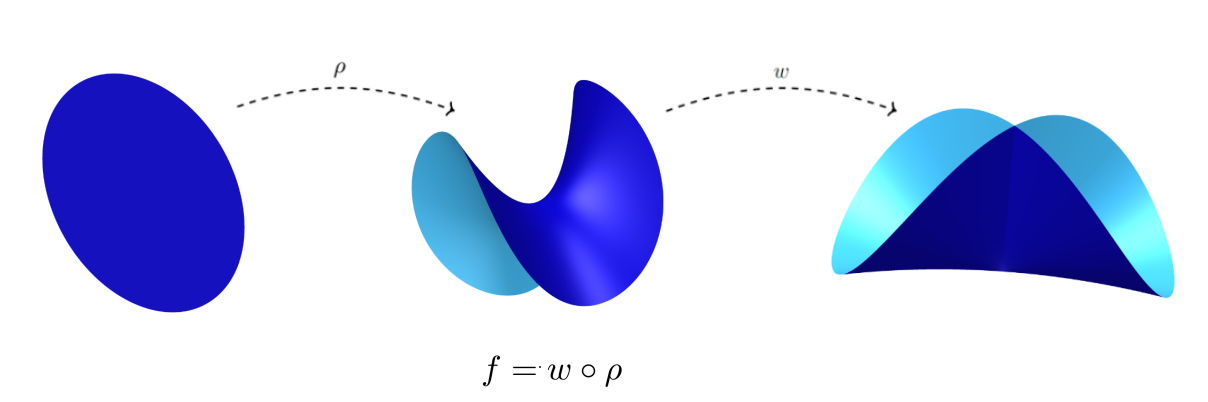}
		\caption{Illustration of a reflection map}\label{figura6}
		%	\label{fig:novo}
	\end{figure}
	
	Since $G$ acts on $\mathbb{C}^p$ then by Shephard-Todd's Theorem (\cite[Th. 5.1]{[6]}, see also \cite[Th. 3.20]{[2]}) we have that the algebra of $G$-invariant holomorphic functions ${\cal O}_p^G$ can be generated by $p$ homogeneous holomorphic functions.
	
	\begin{definition}  The orbit map of a reflection group $G$ is a map $w:\mathbb{C}^p \longrightarrow \mathbb{C}^p$ whose coordinate functions are homogeneous polynomials $w_1,w_2,... ,w_p$ in $S$ that generate $J$. The set of degrees $d_1,\cdots,d_p$ of $G$ are the degrees of $w_1,w_2,\cdots,w_p$, respectively.
	\end{definition} 
	
	The map $w$ is unique up to invertible polynomial transformations in the target. Furthermore, it is well known that the degrees of $G$ do not depend on the choice of the set of generators $w_1,\cdots,w_p$ of ${\cal O}_p^G$. Since we work with objects which are invariant under such transformations, the choice of $w$ does not matter. This justifies our calling $w$ the ``orbit map'' of $G$.
	
	\begin{remark}\label{orbitinvariant} From \cite[Th. 3.20]{[2]}, we know that ${\cal O}_p^G \simeq \mathbb{C}\{w_1,\ldots,w_p\}$, which means that for all $g \in G$ and all $P \in \mathbb{C}\{w_1,\ldots,w_p\}$, we have $g_{\bullet}P=P$.
	\end{remark}
	
	\begin{example}\label{example1} Consider the generators of the dihedral group ${D}_{2m}$ in $GL(\mathbb{C}^2)$,
		$$R=\left[ \begin{array}{rrrrr}
			0 & 1 \\
			1 & 0
		\end{array}\right] \hspace{0.5cm} \textrm{and} \hspace{0.5cm} S=\left[ \begin{array}{rrrrr}
			e^{\frac{2\pi i}{m}} & 0 \\
			0 & e^{-\frac{2\pi i}{m}} 
		\end{array}\right].$$
		
		The orbit map $w:\mathbb{C}^2 \longrightarrow \mathbb{C}^2$ of $D_{2m}$ is given by
		\begin{equation*}
			(x,y) \longmapsto (xy, \ x^m+y^m).
		\end{equation*}
	\end{example}
	
	One of the most important results about the orbit map is Noether's Theorem \cite{[4]} which allows us to conclude that the set $w^{-1}(w(v))$ is the orbit of $G$ in $v$, i.e, the set $Gv:=\lbrace  g_{\bullet}v \ | \ g \in G \rbrace $.
	
	\begin{theorem}\label{teo} \textbf{[Noether]} For any $v \in \mathbb{C}^p$, $w^{-1}(w(v))=Gv$.
	\end{theorem}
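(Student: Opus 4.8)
The plan is to prove the two inclusions $G_v\subseteq w^{-1}(w(v))$ and $w^{-1}(w(v))\subseteq G_v$ separately. The first is immediate from the definitions: each coordinate $w_i$ of the orbit map lies in $J=S^G$, hence is $G$-invariant, so $w_i(g_{\bullet}v)=w_i(v)$ for every $g\in G$ and every $v\in\mathbb{C}^p$; therefore $w(g_{\bullet}v)=w(v)$, i.e. the whole orbit $G_v$ sits inside the fibre $w^{-1}(w(v))$.

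For the reverse inclusion I would argue by contradiction. Suppose $u\in\mathbb{C}^p$ satisfies $w(u)=w(v)$ but $u\notin G_v$. Since $G$ is finite, both orbits $G_u$ and $G_v$ are finite sets, and since $G_v$ is a union of $G$-orbits, the hypothesis $u\notin G_v$ forces $G_u\cap G_v=\varnothing$. The key step is to manufacture a $G$-invariant polynomial that separates $u$ from $v$. Using a standard Lagrange-type interpolation over the finite set of pairwise distinct points $G_u\cup G_v$, choose $P\in S$ with $P\equiv 0$ on $G_v$ and $P\equiv 1$ on $G_u$. Applying the Reynolds operator gives $Q:=Av(P)\in J$, and evaluating, $Q(v)=\frac{1}{|G|}\sum_{g\in G}P(g^{-1}_{\bullet}v)=0$ because every $g^{-1}_{\bullet}v$ lies in $G_v$, while $Q(u)=\frac{1}{|G|}\sum_{g\in G}P(g^{-1}_{\bullet}u)=1$ because every $g^{-1}_{\bullet}u$ lies in $G_u$.

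Finally I would invoke the defining property of the orbit map, namely that $w_1,\dots,w_p$ generate $J$ as a $\mathbb{C}$-algebra, so $Q=\phi(w_1,\dots,w_p)$ for some polynomial $\phi$. Then $1=Q(u)=\phi(w(u))=\phi(w(v))=Q(v)=0$, a contradiction; hence $u\in G_v$, which completes the proof. I expect the only non-routine point to be the construction of the separating invariant $Q$: once the interpolating polynomial $P$ is in hand and one averages it, the evaluations of $Q$ at $u$ and $v$ and the passage to $\phi$ are purely formal. The conceptual content is exactly that the coordinates of $w$ generate \emph{all} of $S^G$, so invariants cannot distinguish two points that $w$ identifies unless those points already lie in a common $G$-orbit.
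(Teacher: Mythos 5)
Your argument is correct. The paper itself offers no proof of this statement --- it is quoted as Noether's classical theorem with a citation to \cite{[4]} --- so there is nothing internal to compare against; what you give is the standard separation argument, and every step checks out. The easy inclusion $G_v\subseteq w^{-1}(w(v))$ follows, as you say, from the $G$-invariance of the coordinates $w_i$ (note that invariance $P(g^{-1}{}_{\bullet}v)=P(v)$ for all $g$ is equivalent to $P(g_{\bullet}v)=P(v)$ for all $g$, since $g$ ranges over a group). For the reverse inclusion, the two points that carry the weight are exactly the ones you identify: (i) the existence of a polynomial taking prescribed values on a finite set of distinct points (Lagrange-type interpolation, which is routine), followed by averaging with the Reynolds operator $Av$ to produce a $G$-invariant $Q$ with $Q(v)=0$ and $Q(u)=1$; and (ii) the fact that, by the paper's very definition of the orbit map, $w_1,\dots,w_p$ generate $J=S^G$ as a $\mathbb{C}$-algebra, so $Q=\phi(w_1,\dots,w_p)$ and $w(u)=w(v)$ forces $Q(u)=Q(v)$, a contradiction. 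One cosmetic remark: $G_v$ is itself a single orbit, so the disjointness $G_u\cap G_v=\varnothing$ follows directly from the fact that orbits partition $\mathbb{C}^p$, rather than from $G_v$ being ``a union of orbits''; this does not affect the validity of the proof.
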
 
	
	Considering $G=\mathbb{Z}_4$, Figure \ref{figura7} illustrates Noether's Theorem. 
	
	\begin{figure}[H]
		\begin{center}
			\begin{tikzpicture}[scale=0.9]
				\draw[fill=black!60,nearly transparent, thick] (0,0) to (4,0) to (3,-2) to (-1,-2) to (0,0);
				\draw[black,dashed] (2,0) to (1, -2);
				\draw[black,dashed] (-0.5,-1) to (3.5, -1);
				
				\node at (0.7,-0.4) {\color{black}{\small $\bullet$}};
				\node at (2.7,-0.4) {\color{black}{\small $\bullet$}};
				\node at (2.2,-1.6) {\color{black}{\small $\bullet$}};
				\node at (0.2,-1.6) {\color{black}{\small $\bullet$}};
				
				\draw[fill=black!60,nearly transparent, thick] (6,0) to (8,0) to (7,-1.5) to (5,-1.5) to (6,0);
				
				\node at (6.8,-1.3) {\color{black}{\small $\bullet$}};
				\draw[black,dashed] (5,-1.7) to (7, -1.7);
				\draw[black,dashed] (7.1,-1.7) to (8.2, 0);
				\draw[black,dashed] (5,-1.9) to (7.1, -1.9);
				\draw[black,dashed] (7.2,-1.9) to (8.4, 0);
				\draw[black,dashed, thick][->] (2.3, 0.3) to [out=20,in=-200] (6.5, 0.3);
				\node at (4.4,1) {\color{black}{\small $w$}};
				
				\node at (0,-1.3) {\color{black}{\small $Idv=v$}};
				\node at (6.8,-1) {\color{black}{\small $w(v)$}};
				
				\draw[black,dashed, thick][<-] (2.3, -2.3) to [out=-20,in=200] (6.5, -2.3);
				\node at (4.4,-3) {\color{black}{\small  $w^{-1}(w(v))$}};
				\node at (2,-1.4) {\color{black}{\small $g_2v$}};
				\node at (2.5,-0.2) {\color{black}{\small $g_3v$}};
				\node at (0.5,-0.2) {\color{black}{\small $g_4v$}};
			\end{tikzpicture}
		\end{center}
		\caption{Orbit map of a reflection group.}\label{figura7}
	\end{figure}
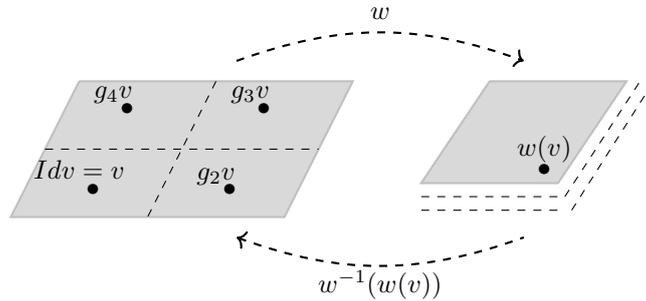
	
	\begin{remark}\label{obs2} It is important to mention that whether a group is a reflection group or not depends on its representation in $GL(\mathbb{C}^p)$. For example, the cyclic group $\mathbb{Z}_d$ generated by the matrix
		
		$$K=\left[ \begin{array}{rrrrr}
			1 & 0 \\
			0 & e^{\frac{2\pi i}{d}}
		\end{array}\right].$$
		\noindent is a reflection group, since $K$ is a reflection. On the other hand, the group generated by the matrix
		
		$$K^{'}=\left[ \begin{array}{rrrrr}
			e^{\frac{2\pi i}{d}} & 0 \\
			0 & \left( e^{\frac{2\pi i}{d}}\right) ^k
		\end{array}\right],$$
		\noindent with $gcd(k,d)=1$, is isomorphic to $\mathbb{Z}_d$. 
		
		Suppose that $\langle K^{'} \rangle \simeq \mathbb{Z}_d$ is a reflection group. By \rm\cite[\textit{Section} \rm $8$]{[6]} \textit{(see also} \rm\cite[\textit{Th.} \rm $4.14$]{[2]}\textit{) we have that the number of reflections in $\mathbb{Z}_d$ is $d-1$ (obviously the identity matrix is not a reflection). Therefore, every element $g \in \langle K^{'} \rangle$ with $g \neq Id$ is a reflection. Note that $Fix K^{'}=0$ (which has dimension 0). By Definition} \rm\ref{reflection}, \textit{$K^{'}$ is not a reflection, a contradiction. Therefore, $\langle K^{'} \rangle$ cannot be generated by reflections.}
	\end{remark}
	
	%\vspace{0.2cm}
	
Let $G$ be a reflection group acting on $\mathbb{C}^p$. Let $Fix \ g_1, \cdots, Fix \ g_l$ be all the (distinct) reflecting hyperplanes of $G$ (where $g_i$ is a reflection of $G$). Let $ord(g_i)$ be the order of $g_i$ and let $L_{g_1},...,L_{g_{\ell}}$ be linear forms such that $Fix \ g_i:=Ker \,L_{g_i}$. An important relation between the orbit map and the defining equations of the reflecting hyperplanes of $G$ is given by the following proposition which can be found for instance in \cite[Th. 9.8]{[6]}. We shall denote the Jacobian matrix of the orbit map $w$ by $jac(w)$.
	
	\begin{proposition}\label{li3} For some non-zero constant $c$ we have that
		\begin{equation*}
			\det(jac(w))=c\prod_{i=1}^{\ell} L_{g_i}^{ord(g_i)-1}.
		\end{equation*}
		
	\end{proposition}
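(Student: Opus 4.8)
The plan is to split the claimed identity into a divisibility statement and a degree count. Write $J:=\det(\mathrm{jac}(w))\in S=\mathbb{C}[X_1,\dots,X_p]$. I would first record two preliminary observations. First, $J\not\equiv 0$: by Shephard--Todd the invariants are $S^{G}=\mathbb{C}[w_1,\dots,w_p]$ and, by Noether, $S$ is a finite module over $S^{G}$, so $w_1,\dots,w_p$ are algebraically independent over $\mathbb{C}$, which in characteristic zero forces their Jacobian to be nonzero. Second, since each $w_k$ is homogeneous of degree $d_k$, every entry $\partial w_k/\partial X_j$ of the $k$-th row is homogeneous of degree $d_k-1$, so each term in the determinant expansion has degree $\sum_{k=1}^{p}(d_k-1)$; hence $J$ is homogeneous of degree $\sum_{k=1}^{p}(d_k-1)$.

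Next I would prove that $L_{H_i}^{e_i-1}$ divides $J$ for each reflecting hyperplane $H:=H_i$. Let $C\leq G$ be the pointwise stabilizer of $H$, which is cyclic of order $e:=e_i$, and pick a generator $g$, which is a reflection with $\mathrm{Fix}\,g=H$. As $g$ is unitary it is diagonalizable, so one can choose linear coordinates $y_1,\dots,y_p$ on $\mathbb{C}^{p}$ in which $H=\{y_1=0\}$ (so $y_1$ is a nonzero scalar multiple of $L_H$) and $g$ acts by $y_1\mapsto\zeta y_1$, $y_j\mapsto y_j$ for $j\geq 2$, with $\zeta$ a primitive $e$-th root of unity. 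Every $C$-invariant polynomial then lies in $\mathbb{C}[y_1^{e},y_2,\dots,y_p]$; in particular each $w_k$, being $G$-invariant hence $C$-invariant, is a polynomial in $y_1^{e},y_2,\dots,y_p$, so $\partial w_k/\partial y_1$ is divisible by $y_1^{e-1}$ for every $k$. Thus the first column of the matrix $(\partial w_k/\partial y_j)$ is divisible by $y_1^{e-1}$, and so is its determinant. Since passing from the $X$-coordinates to the $y$-coordinates multiplies $J$ by a nonzero constant and $y_1$ is proportional to $L_H$, we conclude $L_{H_i}^{e_i-1}\mid J$ in $S$.

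To finish I would compare degrees. Distinct reflecting hyperplanes are cut out by pairwise non-proportional (hence coprime) linear forms, so $\prod_{i=1}^{\ell}L_{H_i}^{e_i-1}$ divides $J$. Now the non-identity elements of the cyclic stabilizer of $H_i$ are precisely the $e_i-1$ reflections of $G$ with fixed hyperplane $H_i$, and every reflection of $G$ arises this way for exactly one $i$; hence the number of reflections of $G$ equals $\sum_{i=1}^{\ell}(e_i-1)$. On the other hand it is classical, a consequence of the Shephard--Todd/Chevalley theory (see \cite{[6]}, \cite{[2]}), that the number of reflections of $G$ equals $\sum_{k=1}^{p}(d_k-1)$. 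Therefore $\deg\big(\prod_{i}L_{H_i}^{e_i-1}\big)=\sum_{i}(e_i-1)=\sum_{k}(d_k-1)=\deg J$, and a homogeneous divisor of $J$ having the same degree as $J$ must equal $cJ^{-1}\cdot J$, i.e.\ $J=c\prod_{i}L_{H_i}^{e_i-1}$ for some constant $c$; since $J\not\equiv 0$, $c\neq 0$.

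I expect the divisibility step $L_{H_i}^{e_i-1}\mid J$ to be the crux: it rests on the fact that the pointwise stabilizer of a reflecting hyperplane is cyclic, so that the generator can be diagonalized with a single nontrivial eigenvalue and the local invariants are generated by $y_1^{e_i}$ together with the remaining coordinates — this is exactly the property packaged into the definition of $e_i$, so no extra work is needed, but it is where the geometry enters. The degree bookkeeping, once the identity $\#\{\text{reflections of }G\}=\sum_k(d_k-1)$ is taken as known, is routine.
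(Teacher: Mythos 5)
Your proof is correct. The paper does not actually prove this proposition---it only cites it to the literature on unitary reflection groups---and your argument (the divisibility $L_{H_i}^{e_i-1}\mid\det(jac(w))$ obtained by diagonalizing a generator of the cyclic pointwise stabilizer of $H_i$ and noting that its invariants lie in $\mathbb{C}[y_1^{e_i},y_2,\dots,y_p]$, followed by the degree comparison $\sum_i(e_i-1)=\sum_k(d_k-1)=\deg\det(jac(w))$ via the count of reflections) is precisely the standard proof given in the cited sources, so there is nothing to add.
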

	
	\begin{definition}\label{0} Let $G$ be a reflection group acting on $\mathbb{C}^p$ and $h:\mathbb{C}^p\longrightarrow \mathbb{C}^r$ be any holomorphic map.
		
		\noindent (a) A $G$-reflection map $f:\mathbb{C}^n\longrightarrow \mathbb{C}^{p}$ is a map given by the composition of an embedding $\rho: \mathbb{C}^n\hookrightarrow \mathbb{C}^{p}$ with the orbit map $w: \mathbb{C}^{p}\longrightarrow \mathbb{C}^{p}$ of $G$, i.e., $f=w \circ \rho$.

		\noindent (b) The $G$-reflected graph of $h$ is the map $(w,h):\mathbb{C}^p \longrightarrow \mathbb{C}^{p+r}$, given by $f(\textbf{x})=(w(\textbf{x}),h(\textbf{x}))$.
		
	\end{definition}
	
	The $G$-reflected graph $f=(w,h)$ is the $G$-reflection map obtained by taking the graph embedding $h$, given by $\textit{\textbf{x}} \longmapsto(\textit{\textbf{x}}, h(\textit{\textbf{x}}))$, and letting $G$ act on $\mathbb {C}^p \times \mathbb{C}^r$, trivially on the second factor (a trivial extension of the action to $\mathbb{C}^{p+r}$). To simplify, we will omit $G$ and simply say reflection map, when $G$ is clear in the context. Throughout this work we will only consider $G$-reflected graphs in the case where $r=1$. For a $G$-reflection map, we will consider only the case where $p=n+1$.
	
	\section{The presentation matrix of reflection maps}\label{secpres}
	
	$ \ \ \ \ \ $ In this section, we will consider a reflection group $G$ acting on $\mathbb{C}^n$ and a holomorphic function $h:\mathbb{C}^n\longrightarrow \mathbb{C}$. We consider an extension of the action of $G$ on $\mathbb{C}^n \times \mathbb{C}$, simply making $G$ act trivially on the second factor, in this sense we also consider an orbit map $w=(w_1,\cdots,w_n,Z)$. We will study a reflection graph from $(\mathbb{C}^n,0)$ to $(\mathbb{C}^{n+1},0)$ given by $f=(w_1,\cdots,w_n,h)$. In this context, we will provide an answer to Question $2$ in the Introduction, i.e., we will present a presentation matrix of the pushforward $f_{\ast}\mathcal{O}_n$ as an $\mathcal{O}_{n+1}$-module, via composition with $f$, where $f$ is a reflected graph map germ. We note that our way to construct the presentation matrix depends only on the action of $G$ on $h$.\\ 
	
	Before we do this, we introduce some notation. Let $f:(\mathbb{C}^n,0)\rightarrow (\mathbb{C}^{n+1},0)$ be a finite analytic map germ, $r_1,r_2,\cdots,r_{\ell}$ be elements in $\mathcal{O}_n$ and denote the maximal ideal of $\mathcal{O}_{n+1}$ by $\textbf{m}$. Note that $\mathcal{O}_n$ is a finite $\mathcal{O}_{n+1}$-module via $f$ and if the classes of $r_1,\cdots,r_{\ell}$ in $\mathcal{O}_n/f^{\ast}\textbf{m}$ generate it as a vector space over $\mathbb{C}=\mathcal{O}_{n+1}/\textbf{m}$, then $r_1,\cdots,r_{\ell}$ generate $\mathcal{O}_n$ as $\mathcal{O}_{n+1}$-module via $f$ (see e.g. \cite[Cor. D.1 and D.2]{[7]}). It is clear that we can take $r_1=1$, and we shall do so from now on. A presentation of $\mathcal{O}_n$ over $\mathcal{O}_{n+1}$ is an exact sequence
	
	\begin{equation}\label{eq8}
		\mathcal{O}_{n+1}^{r} \stackrel{\lambda}{\longrightarrow} \mathcal{O}_{n+1}^{s} \stackrel{\psi}{\longrightarrow} \mathcal{O}_n \longrightarrow 0
	\end{equation}
	
	\noindent of $\mathcal{O}_{n+1}-$modules. It follows by \cite[Lemma 2.1]{[12]} that $r=s=\ell$ in (\ref{eq8}). We say that $\lambda$ in (\ref{eq8}) is the \textit{presentation matrix} of $f_{\ast}\mathcal{O}_n$. Mond and Pellikaan also give an algorithm to construct a presentation (for details, see \cite[Section 2.2]{[12]}).\\
	
	Let $G= \lbrace g_1,\cdots,g_d \rbrace $ be a reflection group of order $d$ acting on $\mathbb{C}^n$. We will adopt the notation $\textit{\textbf{w}}=(w_1,w_2,...,w_n)$ for the orbit map of $G$, $(\textit{\textbf{X}},Z)=(X_1,\cdots,X_{n},Z)$ for the coordinates of $\mathbb{C}^{n+1}$ (target) and $\textit{\textbf{x}}=(x_1,x_2,...,x_n)$ for the coordinates of $\mathbb{C}^n$ (source). Throughout this work, the bold notation (e.g. $\textit{\textbf{w}}, \textit{\textbf{x}}$ and $\textit{\textbf{X}}$) will always be used to represent an $n$-tuple. Recall that, according to Definition \ref{reflection}, an element $g \in G$ is a \textit{reflection} on $\mathbb{C}^n$ if $g$ is unitary, has finite order, and satisfies $\dim Fix\, g=n-1$. We denote by ${\cal R}$ the set of all reflections of $G$.\\ 
	
	Recall that a reflected graph map germ $f:(\mathbb{C}^n,0)\rightarrow (\mathbb{C}^{n+1},0)$, $f(\textit{\textbf{x}})=(w(\textit{\textbf{x}}),h(\textit{\textbf{x}}))$, is obtained as the composition $f=w \circ \rho$, where $\rho$ is the embedding $(\textit{\textbf{x}},h(\textit{\textbf{x}}))$ and $w$ is the orbit map of $G$. Denote by ${\cal Y}\coloneqq\rho(\mathbb{C}^n)$ and ${\cal X}\coloneqq w({\cal Y})$. Note that ${\cal X}=f(\mathbb{C}^n)=\textbf{V}(F(\textit{\textbf{X}},Z))$, where $F(\textit{\textbf{X}},Z)$ is a defining equation of the image of $f$ which is obtained as the determinant of the presentation matrix of $f_{\ast}(\mathcal{O}_{n})$. It follows that a defining equation of $(\mathcal{Y},0)$ is $Z-h(X_1,\cdots,X_n)=0$. By Noether's Theorem (see Theorem \ref{teo}) we have that 
	\begin{equation*}
		w^{-1}(w({\cal Y}))=\displaystyle \bigcup_{i=1}^d {g_{i}}_{\bullet}\mathcal{Y}.
	\end{equation*} The pre-image of $\mathcal{X}$ by $w$ consists of the orbit of $\mathcal{Y}$ under the action of $G$. We will show in the following lemma that a defining equation of $w^{-1}(w({\cal Y}))$ is  given by 
	\begin{equation}\label{eqY}
		(Z-{g_1}_{\bullet}h(\textit{\textbf{X}}))\cdot (Z-{g_2}_{\bullet}h(\textit{\textbf{X}})) \cdots (Z-{g_d}_{\bullet}h(\textit{\textbf{X}}))=0.
	\end{equation}
	The following Lemma will be a key tool to prove Theorem \ref{teeeo} where we will provide a defining equation of the image of $f$. We will also show that the defining equation of $w^{-1}(w({\cal Y}))$, described in (\ref{eqY}), coincides with the pullback of the defining equation $F(\textit{\textbf{X}},Z)$ of $(\mathcal{X},0)$ by $\textit{\textbf{w}}$, which will be denoted by $F(w_1,\cdots,w_n ,Z)=F(\textit{\textbf{w}},Z)$.

	\begin{lemma}\label{prim} With the notation above, let $f:(\mathbb{C}^n,0) \longrightarrow (\mathbb{C}^{n+1},0)$, $f(\textbf{x})=(w(\textbf{x}),h(\textbf{x}))$ be a reflected graph map germ, then $w^{-1}(w({\cal Y}))=\textbf{V}(F(\textbf{w},Z))$. Furthermore,
		\begin{equation}\label{factors}
			F(\textbf{w},Z)=\prod_{k=1}^{d}\left(Z-{g_k}_{\bullet}(h(\textbf{x}))\right).
		\end{equation}
	\end{lemma}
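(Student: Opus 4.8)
The plan is to prove the two assertions separately. The set-theoretic identity $\widetilde{\mathcal Y}=\textbf{V}(F(\textbf{w},Z))$ is essentially formal once one uses that $w(\mathcal Y)=\mathcal X=\textbf{V}(F(\textbf{X},Z))$, while the factorization (\ref{factors}) will be obtained by recognizing $F(\textbf{w},Z)$ as the characteristic polynomial of ``multiplication by $h$'' on $\mathcal O_n$, regarded as a module over the ring of $G$-invariant germs, and then invoking the classical formula for the characteristic polynomial of a multiplication operator in a Galois extension.

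First I would establish $\widetilde{\mathcal Y}=\textbf{V}(F(\textbf{w},Z))$. By definition $\widetilde{\mathcal Y}=w^{-1}(w(\mathcal Y))$, and by construction $w(\mathcal Y)=f(\mathbb C^n)=\mathcal X=\textbf{V}(F(\textbf{X},Z))$. Since $w$ fixes the last coordinate, a point $v$ in the source of $w$ satisfies $v\in w^{-1}(\textbf{V}(F(\textbf{X},Z)))$ if and only if $F(w(v))=0$, i.e. if and only if $(F\circ w)(v)=F(w_1,\dots,w_n,Z)(v)=F(\textbf{w},Z)(v)=0$; hence $\widetilde{\mathcal Y}=\textbf{V}(F(\textbf{w},Z))$.

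For the factorization I would first use the Chevalley--Shephard--Todd theorem (see e.g.\ \cite{[2]}): $\mathcal O_n$ is a free module of rank $d=|G|$ over the subring $R:=\mathbb C\{w_1,\dots,w_n\}=f^{\ast}\mathbb C\{X_1,\dots,X_n\}$ of $G$-invariant germs. Fix a basis $b_1=1,b_2,\dots,b_d$. Since $f^{\ast}X_i=w_i$ and $f^{\ast}Z=h$, the $\mathcal O_{n+1}$-module $f_{\ast}\mathcal O_n$, generated by $b_1,\dots,b_d$, is presented by exactly the relations $h\,b_j=\sum_i c_{ij}b_i$ with $c_{ij}\in R$; that is, a presentation matrix is $Z\,\mathrm{Id}_d-C(\textbf{X})$, where $C$ has entries in $\mathbb C\{X_1,\dots,X_n\}$, does not involve $Z$, and $C(\textbf{w})=(c_{ij})$ is the matrix of the $R$-linear endomorphism ``multiplication by $h$'' of $\mathcal O_n$ (up to equivalence this is the presentation matrix of Theorem \ref{c3}; it is also produced by the Mond--Pellikaan algorithm, see \cite{[12]}). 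After normalizing $F$ to be monic in $Z$, we get $F(\textbf{X},Z)=\det(Z\,\mathrm{Id}_d-C(\textbf{X}))$, hence $F(\textbf{w},Z)=\det(Z\,\mathrm{Id}_d-C(\textbf{w}))$ is the characteristic polynomial of multiplication by $h$ on $\mathcal O_n$ over $R$.

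It then remains to compute this characteristic polynomial. Passing to fraction fields, $\mathrm{Frac}(\mathcal O_n)$ is a Galois extension of $\mathrm{Frac}(R)=\mathrm{Frac}(\mathcal O_n^{G})$ of degree $d$, with Galois group $G$ acting via $g\mapsto g_{\bullet}$; the characteristic polynomial of multiplication by an element $h$ in such an extension is $\prod_{g\in G}(Z-g_{\bullet}h)$. Therefore $F(\textbf{w},Z)=\prod_{k=1}^{d}(Z-{g_k}_{\bullet}h)$, which is (\ref{factors}); together with the first part this shows in particular that the polynomial in (\ref{eqY}) is a defining equation for $\widetilde{\mathcal Y}$. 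The main obstacle is exactly this last step: Noether's Theorem (Theorem \ref{teo}) gives the set-theoretic decomposition $\widetilde{\mathcal Y}=\bigcup_{k=1}^{d}{g_k}_{\bullet}\mathcal Y=\bigcup_{k=1}^{d}\textbf{V}(Z-{g_k}_{\bullet}h)$ almost at once (one only needs that $G$ acts trivially on the last factor), but promoting this set equality to the polynomial identity (\ref{factors}) — in particular getting the multiplicities of the repeated linear factors right, which is a genuine issue precisely when $h$ has a non-trivial stabilizer in $G$ and the image of $f$ is non-reduced — forces one to identify $F(\textbf{w},Z)$ with the above characteristic polynomial (equivalently, with the norm $N(Z-h)$ of the defining equation $Z-h$ of $\mathcal Y$ along the finite flat covering $w$), and this is where the freeness of $\mathcal O_n$ over $R$ and the Galois-theoretic computation are indispensable.
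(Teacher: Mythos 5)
Your argument is correct, and it follows the paper's strategy up to the decisive point: like the paper, you observe that the pullback of a defining equation of $\mathcal{X}$ defines $w^{-1}(w(\mathcal{Y}))$, you use the freeness of $\mathcal{O}_n$ of rank $d=|G|$ over the invariant ring $R=\mathbb{C}\{w_1,\dots,w_n\}$ (Chevalley--Shephard--Todd) together with the Mond--Pellikaan presentation to identify $F(\textbf{w},Z)$, up to sign, with the characteristic polynomial of multiplication by $h$ on $\mathcal{O}_n$ over $R$. Where you genuinely diverge is in evaluating that characteristic polynomial: the paper stays at the level of the coinvariant basis $r_1,\dots,r_d$, exhibits $({g_k}_{\bullet}r_1,\dots,{g_k}_{\bullet}r_d)$ as an explicit eigenvector with eigenvalue ${g_k}_{\bullet}h$ for each $k$, and invokes Gutkin's theorem to show that $\det(E)\neq 0$, hence that these $d$ eigenvectors diagonalize the matrix; you instead pass to fraction fields, use Artin's theorem that $\mathrm{Frac}(\mathcal{O}_n)/\mathrm{Frac}(\mathcal{O}_n^{G})=\mathrm{Frac}(\mathcal{O}_n)/\mathrm{Frac}(R)$ is Galois with group $G$, and quote the standard fact that the characteristic polynomial of a multiplication operator in a Galois extension is $\prod_{g\in G}(Z-g_{\bullet}h)$ (which, as you note, automatically accounts for the correct multiplicities when $h$ has a nontrivial stabilizer). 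Your route is shorter and avoids Gutkin's theorem entirely; the paper's explicit diagonalization costs more but is not wasted effort, since the eigenvector matrix $E$ and the diagonal matrix $A$ it produces are exactly the ingredients reused in Theorem \ref{c3} (the factorization $\lambda[\textbf{X},Z]=w^{\ast}(E\cdot A_Z\cdot E^{-1})$) and in Proposition \ref{det} (the closed formula for $\det E$). Both arguments share the same mild dependence on \cite{[12]} for the fact that $Z\,\mathrm{Id}-C(\textbf{X})$ really is a presentation matrix, so that its determinant generates the $0$-Fitting ideal.
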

	
	\begin{proof}Note that $({\cal X},0)\subset (\mathbb{C}^{n+1},0)$ is defined by a single equation (the determinant of the presentation matrix $\lambda[\textit{\textbf{X}},Z]$ of $f_*{\cal O}_n$ as a ${\cal O}_{n+1} $-module via $f$, denoted by $F(\textit{\textbf{X}},Z)$). Since $F(\textit{\textbf{X}},Z)$ is a defining equation of $\mathcal{X}$, it follows that the pullback $F(\textit{\textbf{w}},Z)$ of $F(\textit{\textbf{X}},Z)$ by $w$ is a defining equation of $w^{-1}(w(\mathcal{Y}))$. Let us show that $F(\textit{\textbf{w}},Z)$ can be factored as in (\ref{factors}).\\
		
		By \cite[Cor. 3.29]{[2]} we have that 
		
		\begin{equation*}
			\dim_{\mathbb{C}} \dfrac{\mathbb{C}\{x_1,x_2,...,x_n\}}{\left\langle w_1,w_2,...,w_n\right\rangle }=|G|=d.
		\end{equation*}
		
		\noindent Let $1=r_1,r_2,...,r_{d}$ be a basis of $\dfrac{\mathbb{C}\{\textit{\textbf{x}}\}}{\left\langle \textit{\textbf{w}}\right\rangle }$ as a $\mathbb{C}$-vector space. By \cite[Lemma 3.28]{[2]} there are $p_1,p_2,\cdots, p_d$ in $\mathbb{C}\{\textit{\textbf{x}}\}$ such that 
		
		\begin{equation*}
			h:=r_1p_1(w)+r_2p_2(w)+...+r_{d}p_{d}(w),
		\end{equation*}
		Note that $p_i(\textit{\textbf{w}})=p_i(w_1,\cdots,w_n)$ are uniquely determined by $h$ and the orbit map $w$. Now we will follow Mond-Pellikaan's algorithm (see \cite[Section 2.2]{[12]}) to construct a presentation matrix $\lambda[\textit{\textbf{X}},Z]$ of $f_*{\cal O}_n$ as an ${\cal O}_{n+1} $-module via $f$. Let us first consider the matrix
		
		$$\alpha[\textit{\textbf{X}}]=\left[ \begin{array}{rrrrr}
			\alpha_{1,1} & \alpha_{1,1} & \alpha_{1,2} & \cdots & \alpha_{1,d} \\
			\alpha_{2,1} & \alpha_{2,2} & \alpha_{2,3} & \cdots & \alpha_{2,d} \\
			\alpha_{3,1} & \alpha_{3,2} & \alpha_{3,3} & \cdots & \alpha_{3,d} \\
			\vdots & \vdots & \vdots & \cdots & \vdots \\
			\alpha_{d,1} & \alpha_{d,2} & \alpha_{d,3} & \cdots & \alpha_{d,d}
		\end{array}\right]_{d\times d},$$
		
		\noindent where the $\alpha_{i,j}(\textit{\textbf{X}})$ are in $\mathbb{C}\{\textit{\textbf{X}}\}$ and satisfy the relation
		
		\begin{equation}\label{relation}
			r_i \cdot h=\sum_{j=1}^{d}(\alpha_{i,j}(\textit{\textbf{w}}))\cdot r_j, \ \ \ \ \  with \ \ \ \ i=1,\cdots, d.
		\end{equation}
		
		Again by \cite[Lemma 3.28]{[2]} we note that the $\alpha_{i,j}$ are uniquely determined by $h$ and $w$. Thus, the presentation matrix of $f_*{\cal O}_n$ is
		 
		$$\lambda[\textit{\textbf{X}},Z]=\alpha[\textit{\textbf{X}}]-ZId=\left[ \begin{array}{rrrrr}
			\alpha_{1,1}-Z & \alpha_{1,2} & \alpha_{1,3} & \cdots & \alpha_{1,d} \\
			\alpha_{2,1} & \alpha_{2,2}-Z & \alpha_{2,3} & \cdots & \alpha_{2,d} \\
			\alpha_{3,1} & \alpha_{3,2} & \alpha_{3,3}-Z & \cdots & \alpha_{3,d} \\
			\vdots & \vdots & \vdots & \cdots & \vdots \\
			\alpha_{d,1} & \alpha_{d,2} & \alpha_{d,3} & \cdots & \alpha_{d,d}-Z
		\end{array}\right].$$
		
		Having defined the presentation matrix $\lambda[\textit{\textbf{X}},Z]$ according to Mond-Pellikaan's algorithm, now we would like to factorize $F(\textit{\textbf{w}},Z)$ as in (\ref{factors}). Substituting $\textit{\textbf{X}}$ for $\textit{\textbf{w}}$ in $\lambda[\textit{\textbf{X}},Z]$ and $\alpha[\textit{\textbf{X}}]$, we obtain the matrices ${\lambda} [\textit{\textbf{w}},Z]$ and $\alpha[\textit{\textbf{w}}]$, i.e., ${\lambda} [\textit{\textbf{w}},Z]$ ($\alpha[\textit{\textbf{w}}]$, respectively) is the pullback of $\lambda[\textit{\textbf{X}},Z]$ ($\alpha[\textit{\textbf{X}}]$, respectively) by $w$. Clearly, the determinant of ${\lambda} [\textit{\textbf{w}},Z]$ is equal to $F(\textit{\textbf{w}},Z)$.
		
		Let $\mathbb{K}:=Frac(\mathbb{C}\{\textit{\textbf{x}}\})$ and consider $\alpha[\textit{\textbf{w}}]$ as a matrix with entries in $\mathbb{K}$. Note that ${\lambda} [\textit{\textbf{w}},Z]=(\alpha[\textit{\textbf{w}}]-ZId)$, where $Id$ is the identity matrix. Therefore $F(\textit{\textbf{w}},Z)$ is precisely the characteristic polynomial of $\alpha[\textit{\textbf{w}}]$. Let us find the eigenvalues of $\alpha[\textit{\textbf{w}}]$. Consider $Id=g_1,g_2,...,g_{d}$ the elements of the reflection group $G$. For all $i=1,\cdots,d$ we obtain from Equation (\ref{relation}) that
		\begin{equation}\label{auto}
			\alpha_{i,1}(\textit{\textbf{w}})\cdot r_1+\cdots+\alpha_{i,d}(\textit{\textbf{w}}) \cdot r_{d}=h\cdot r_i.
		\end{equation}
		
		Since each $\alpha_{i,j}(\textit{\textbf{w}})=\alpha_{i,j}(w_1,\cdots,w_n)$ is in  $\mathbb{C}\{w_1,\ldots,w_p\}$, by Remark \ref{orbitinvariant} we have that $\alpha_{i,j}(\textit{\textbf{w}})$ is invariant under the action of $G$ for all $i,j \in \{1,...,n\}$. Thus, applying the action of $g_k \in G$ in (\ref{auto}), we obtain that

		\begin{equation*}
			\alpha_{i,1}(\textit{\textbf{w}})\cdot {g_k}_{\bullet}(r_1)+\cdots+\alpha_{i,d}(\textit{\textbf{w}}) \cdot {g_k}_{\bullet}(r_{d})= {g_k}_{\bullet}(h) \cdot {g_k}_{\bullet}(r_i)
		\end{equation*} 
		from which we conclude that 
		$$\left[ \begin{array}{ccccc}
			\alpha_{1,1} & \alpha_{1,2} & \alpha_{1,3} & \cdots & \alpha_{1,d} \\
			\alpha_{2,1} & \alpha_{2,2} & \alpha_{2,3} & \cdots & \alpha_{2,d} \\
			\alpha_{3,1} & \alpha_{3,2} & \alpha_{3,3} & \cdots & \alpha_{3,d} \\
			\vdots & \vdots & \vdots & \cdots & \vdots \\
			\alpha_{d,1} & \alpha_{d,2} & \alpha_{d,3} & \cdots & \alpha_{d,d}
		\end{array}\right]\cdot \left[ \begin{array}{c}
			{g_k}_{\bullet}(r_1) \\
			{g_k}_{\bullet}(r_2) \\
			{g_k}_{\bullet}(r_3) \\
			\vdots  \\
			{g_k}_{\bullet}(r_{d})
		\end{array}\right]={g_k}_{\bullet}(h) \left[ \begin{array}{c}
			{g_k}_{\bullet}(r_1)  \\
			{g_k}_{\bullet}(r_2)\\
			{g_k}_{\bullet}(r_3) \\
			\vdots \\
			{g_k}_{\bullet}(r_{d})
		\end{array}\right].$$
		
		In this way for all $k=1,\cdots,d$ we have that $({g_k}_{\bullet}(r_1),{g_k}_{\bullet}(r_2),...,{g_{k}}_{\bullet}(r_{d}))$ is an eigenvector of $\alpha[\textit{\textbf{w}}]$ with respective eigenvalue ${g_k}_{\bullet}(h)$. 
		
		Consider the matrix $E$ whose columns are the eigenvectors $({g_k}_{\bullet}(r_1),{g_k}_{\bullet}(r_2),...,{g_{k}}_{\bullet}(r_{d}))$, i.e.
		
		$$E=\begin{bmatrix}
			{g_1}_{\bullet}r_1 & {g_2}_{\bullet}r_1 &  \cdots & {g_d}_{\bullet}r_1 \\
			{g_1}_{\bullet}r_2 & {g_2}_{\bullet}r_2 &  \cdots & {g_d}_{\bullet}r_2 \\
			\vdots & \vdots &  \ddots & \vdots \\
			{g_1}_{\bullet}r_{d-1} & {g_2}_{\bullet}r_{d-1} &  \cdots & {g_d}_{\bullet}r_{d-1} \\
			{g_1}_{\bullet}r_d & {g_2}_{\bullet}r_d &  \cdots & {g_d}_{\bullet}r_d \
		\end{bmatrix}$$
		
		It follows by Gutkin's Theorem (see \cite[Th. 10.13]{[2]}, where the matrix $A_{i,j}$ in Definition 10.6 of \cite{[2]} is the matrix $E$ in our setting) that the determinant of $E$ is a non-zero. Hence, the set of these eigenvectors are linearly independent. Therefore,
		
		\begin{equation}\label{det}
			\det \lambda[\textit{\textbf{w}},Z]:=F(\textit{\textbf{w}},Z)=\prod_{k=1}^{d}\left(Z-{g_k}_{\bullet}(h(\textit{\textbf{x}}))\right).
		\end{equation}
		Which completes the proof.\end{proof}

	\begin{remark}\label{obs}(a) In the sequel, given a matrix $M=(m_{i,j}(w_1,\cdots,w_n))$ where each $m_{i,j}$ is in $\mathbb{C}[w_1,\cdots,w_n]$ (i.e., $m_{i,j}$ is $G$-invariant). Also, we denote by $w^{\ast}(M)$ the matrix obtained by exchanging $w_i$ for $X_i$ in $m_{i,j}$, i.e., $w^{\ast}(M)=(m_{i,j}(X_1,\cdots,X_n))$.\\

		\noindent (b)	Let $\lambda[\textbf{X},Z]$ be the presentation matrix of $f_{*}{\cal O}_n$ as a ${\cal O}_{n+1}$-module and $\lambda[\textbf{w},Z]$ as in Lemma \ref{prim}. With the notation above consider the matrices $E$ and $A$ below
		
		$$E=\begin{bmatrix}
			{g_1}_{\bullet}r_1 & {g_2}_{\bullet}r_1 &  \cdots & {g_d}_{\bullet}r_1 \\
			{g_1}_{\bullet}r_2 & {g_2}_{\bullet}r_2 &  \cdots & {g_d}_{\bullet}r_2 \\
			\vdots & \vdots &  \ddots & \vdots \\
			{g_1}_{\bullet}r_{d-1} & {g_2}_{\bullet}r_{d-1} &  \cdots & {g_d}_{\bullet}r_{d-1} \\
			{g_1}_{\bullet}r_d & {g_2}_{\bullet}r_d &  \cdots & {g_d}_{\bullet}r_d \
		\end{bmatrix} \hspace{0.3cm} \textrm{and} \hspace{0.3cm} A=\begin{bmatrix}
			{g_1}_{\bullet}h & 0 & 0 & \cdots & 0 \\
			0 & {g_2}_{\bullet}h & 0 & \cdots & 0 \\
			\vdots & \vdots & \vdots & \ddots & \vdots \\
			0 & 0 & 0 & \cdots & 0\\
			0 & 0 & 0 & \cdots & {g_d}_{\bullet}h \
		\end{bmatrix}.$$
		
		As we say in the proof of Lemma \ref{prim}, note that $E$ is the eigenvector matrix of $\lambda[\textit{\textbf{w}},0]$ and the elements that appear on the diagonal of the matrix $A$ are the eigenvalues of $\lambda[\textit{\textbf{w}},0]$. This motivates the following theorem.
	\end{remark}
	
	\begin{theorem}\label{c3} Let $f:(\mathbb{C}^n,0)\longrightarrow (\mathbb{C}^{n+1},0)$, $f(\textbf{x})=(w(\textbf{x}),h(\textbf{x}))$, be a reflected graph map germ. Consider the matrices $E$ and $A$ as above and set $ A_Z:=(A-ZId)$. The presentation matrix of $f_{*}{\cal O}_n$ as a ${\cal O}_{n+1}$-module $\lambda[\textbf{X} ,Z]$ is expressed as the product of the matrices $E$, $A$ and $E^{-1}$, i.e.
		
		\begin{equation*}
			\lambda[\textbf{X},Z]=w^{\ast}(E\cdot A_Z \cdot E^{-1}).
		\end{equation*}
		
	\end{theorem}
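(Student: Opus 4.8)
The plan is to harvest everything needed from the proof of Lemma \ref{prim}, where the matrices $E$ and $A$ have already been introduced, and to package that computation into a single matrix identity, then transport it back from the invariants to the coordinates $X_i$ via the operation $w^\ast$ of Remark \ref{obs}(a). In the proof of Lemma \ref{prim} it was shown that, regarding $\alpha[\textbf{w}]$ as a matrix over $\mathbb{K}=\mathrm{Frac}(\mathbb{C}\{\textbf{x}\})$, each column $({g_k}_\bullet r_1,\ldots,{g_k}_\bullet r_d)$ of $E$ is an eigenvector of $\alpha[\textbf{w}]$ with eigenvalue ${g_k}_\bullet h$, i.e. the $k$-th diagonal entry of $A$, and that $\det E$ is a nonzero polynomial by Gutkin's Theorem, so $E\in GL_d(\mathbb{K})$. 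First I would record this as the single identity $\alpha[\textbf{w}]\cdot E=E\cdot A$, equivalently $\alpha[\textbf{w}]=E\cdot A\cdot E^{-1}$. Subtracting $Z\cdot Id$ from both sides (and using that $Z$ is central) gives immediately $\lambda[\textbf{w},Z]=\alpha[\textbf{w}]-Z\cdot Id=E(A-Z\cdot Id)E^{-1}=E\cdot A_Z\cdot E^{-1}$.

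Next I would observe that, although the individual entries of $E$ and of $E^{-1}$ are not $G$-invariant, the product $E\cdot A_Z\cdot E^{-1}$ has entries in $\mathbb{C}\{w_1,\ldots,w_n\}[Z]$: this is automatic, because that product equals $\lambda[\textbf{w},Z]$, whose entries are $\alpha_{i,j}(\textbf{w})$ off the diagonal and $\alpha_{i,j}(\textbf{w})-Z$ on the diagonal, each of which is $G$-invariant by the construction of the $\alpha_{i,j}$ in Lemma \ref{prim}. Consequently the operation $w^\ast$ of Remark \ref{obs}(a) — replacing $w_i$ by $X_i$ in a matrix whose entries are $G$-invariant — is legitimately defined on $E\cdot A_Z\cdot E^{-1}$, producing $w^\ast(E\cdot A_Z\cdot E^{-1})$.

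Finally I would identify $w^\ast(E\cdot A_Z\cdot E^{-1})$ with $\lambda[\textbf{X},Z]$. By definition $\lambda[\textbf{w},Z]$ is the pullback of $\lambda[\textbf{X},Z]$ by $w$, i.e. the matrix obtained from $\lambda[\textbf{X},Z]$ by the substitution $X_i\mapsto w_i$; since $w_1,\ldots,w_n$ are algebraically independent (the invariant algebra of a reflection group on $\mathbb{C}^n$ is a polynomial ring on $n$ generators), this substitution is an isomorphism onto $\mathbb{C}\{w_1,\ldots,w_n\}$ whose inverse is precisely $w^\ast$. Applying $w^\ast$ to both sides of $\lambda[\textbf{w},Z]=E\cdot A_Z\cdot E^{-1}$ then yields $\lambda[\textbf{X},Z]=w^\ast(E\cdot A_Z\cdot E^{-1})$, which is the claim. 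The only genuinely delicate point is this last bookkeeping step — verifying that $w^\ast$ is well defined on the matrix in question and that it really inverts the pullback by $w$; the rest is the linear algebra over $\mathbb{K}$ already carried out in Lemma \ref{prim}.
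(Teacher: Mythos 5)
Your proposal is correct and follows essentially the same route as the paper: both deduce $\lambda[\textbf{w},Z]=E\cdot A_Z\cdot E^{-1}$ from the eigenvector computation in Lemma \ref{prim} (the paper phrases it as $E^{-1}\lambda[\textbf{w},0]E=A$) and then apply $w^\ast$ to recover $\lambda[\textbf{X},Z]$. Your extra care in justifying that the entries of $E\cdot A_Z\cdot E^{-1}$ are $G$-invariant and that $w^\ast$ genuinely inverts the pullback by $w$ (via algebraic independence of the $w_i$) is a welcome elaboration of a step the paper leaves implicit, but it is not a different argument.
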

	
	\begin{proof}From the proof of Lemma \ref{prim} we have that $E^{-1}\lambda[\textit{\textbf{w}},0]E=A$. In other words, the matrix $E$ diagonalizes the matrix $\lambda[\textit{\textbf{w}},0]$. Thus, 
		
		\begin{equation*}
			\lambda[\textit{\textbf{w}},Z]=EAE^{-1}-ZId=E(A-ZId)E^{-1}=E\cdot A_Z \cdot E^{-1}.
		\end{equation*}
		
		\noindent Now, the result follows from the fact that $w^{\ast}( \lambda[\textit{\textbf{w}},Z])= \lambda[\textit{\textbf{X}},Z]$. \end{proof}
	
\begin{definition}The quotient $\mathcal{O}_n / (w_1,\cdots,w_n)$ is called the \textit{coinvariant algebra} of the reflection group $G$ (see \cite[Ch. 3, Sec. 6]{[2]}).
\end{definition}

Another important space in the theory of reflection groups is the space $\mathcal{H}$ of $G$-harmonic analytic functions (see \cite[Def. 9.35]{[2]}). For the purposes of this work, we highlight the fact that $\mathcal{H}$ is isomorphic (as a $G$-module) to the coinvariant algebra of $G$. Thus, in what follows, we can identify $\mathcal{H}$ with $\mathcal{O}_n / (w_1,\cdots,w_n)$ as $G-$modules.	
	
	Note that in the proof of Lemma \ref{prim} we obtained that the determinant of the matrix $E$ is not zero. Actually, we can say more about the determinant of the matrix $E$. First, let us establish some notation. Consider a reflection group $G$ of order $d$ acting on $\mathbb{C}^n \times \mathbb{C}$ (trivially on the second factor). Denote by $Fix\,g_1,\cdots, Fix\,g_l$ the (distinct) reflecting hyperplanes of $G$. Let $L_{g_i}$ be a linear form from $\mathbb{C}^{n+1}$ to $\mathbb{C}$ such that $Fix\,g_i=\textbf{V}(L_{g_i})$ for all $i$. Let $g_{i}$ be a generator of the cyclic group which fixes $Fix\,g_i$ and denote the order of $g_{i}$ by $ord(g_i)$. The following proposition gives us an expression for the determinant of $E$.  
	
	\begin{proposition}\label{det} Let $G$ be a finite reflection group and $f:(\mathbb{C}^n,0)\longrightarrow (\mathbb{C}^{n+1},0)$ be a reflected graph map germ, then for some non-zero constant $c$, we have
		\begin{equation*}
			\det(E)=c\prod_{i=1}^{\ell}{L_{g_i}}^{|G|(ord(g_i)-1)/2}
		\end{equation*}
	\end{proposition}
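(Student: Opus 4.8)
The plan is to compute $\det(E)$ by exploiting its transformation behaviour under the $G$-action, just as one computes the Jacobian determinant of the orbit map in Proposition \ref{li3}. First I would observe that $E$ is (up to $w^{\ast}$) the matrix whose $k$-th column records the coordinates of ${g_k}_{\bullet}(h)\cdot {g_k}_{\bullet}(r_i)$ relative to the basis $r_1,\dots,r_d$ — more precisely, by construction the $(i,k)$ entry of $E$ is ${g_k}_{\bullet}(r_i)$, so $E$ is the ``character matrix'' of the $G$-set structure on $\mathbb{C}\{\textbf{x}\}/\langle\textbf{w}\rangle$. The key structural input is that $\mathbb{C}\{\textbf{x}\}$ is a free module of rank $d$ over its invariant subring $\mathbb{C}\{\textbf{w}\}$ (the ``coinvariant algebra'' statement, \cite[Cor. 3.9, Lemma 3.28]{[2]}), and that the $r_i$ may be chosen to be homogeneous; then $\det(E)$ is a homogeneous polynomial in $\textbf{x}$ whose degree is $\sum_{i=1}^d \deg(r_i)$, which by standard reflection-group theory equals $\sum_{i=1}^\ell (e_i-1) = \#{\cal R}$, the number of reflections, but weighted — I will need the identity $\sum_i \deg(r_i) = |G|\sum_i (e_i-1)/2 = \frac{1}{2}|G|\cdot\#\{\text{reflecting hyperplanes counted with }e_i-1\}$, which follows from the fact that $\prod_i d_i = |G|$ and $\sum_i(d_i-1)=\#{\cal R}$ together with a double-counting of the exponents.

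Next I would pin down the divisors of $\det(E)$. The essential point is: if $g_{H_i}$ is a reflection fixing the hyperplane $H_i = \textbf{V}(L_{H_i})$, then specializing $\textbf{x}$ to a generic point of $H_i$ forces a collapse among the rows (or columns) of $E$ indexed by a coset of the cyclic stabilizer $\langle g_{H_i}\rangle$, because the relevant orbit points ${g_k}_{\bullet}(v)$ collide there. This makes $\det(E)$ vanish on $H_i$; more carefully, a local computation at a generic point of $H_i$ — where the action is, after a linear change, a pure rotation of one coordinate by the $e_i$-th roots of unity acting on the remaining source coordinate — shows that $L_{H_i}$ divides $\det(E)$ to order exactly $|G|(e_i-1)/2$. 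The combinatorial bookkeeping here is: the hyperplane $H_i$ occurs in $|G|/e_i$ many translates under $G$ but each contributes, and the Vandermonde-type structure of the collapsing block of size $e_i$ over each of the $|G|/e_i$ cosets gives a vanishing order $\binom{e_i}{2} = e_i(e_i-1)/2$ per coset, totalling $(|G|/e_i)\cdot e_i(e_i-1)/2 = |G|(e_i-1)/2$. So $\prod_i L_{H_i}^{|G|(e_i-1)/2}$ divides $\det(E)$.

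Finally, I would compare degrees: the product $\prod_i L_{H_i}^{|G|(e_i-1)/2}$ has degree $\sum_i |G|(e_i-1)/2$, and this must match $\deg\det(E) = \sum_i\deg(r_i)$. The identity $\sum_i \deg(r_i) = \tfrac{|G|}{2}\sum_i(e_i-1)$ is exactly the statement that the coinvariant algebra has Poincar\'e polynomial $\prod_j \frac{1-t^{d_j}}{1-t}$ whose ``total degree'' $\sum_j\binom{d_j}{2}$ equals the number of reflections $\sum_i(e_i-1)$ — combined with the transitivity/orbit-counting that converts $\sum_i(e_i-1)$ (over all reflections) into $\tfrac{|G|}{2}\sum_i(e_i-1)$ (over reflection hyperplanes, using that $\sum_j(d_j-1)=\#{\cal R}$). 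Since both sides are homogeneous of the same degree and one divides the other, they agree up to a nonzero constant $c$, which is the claim. I expect the main obstacle to be the local vanishing-order computation at a generic point of $H_i$: one must argue that the collapsing block is genuinely Vandermonde-like (not higher-order degenerate) so that the order is exactly $\binom{e_i}{2}$ and not more, which requires using that the $r_i$ restricted to the relevant one-dimensional transverse slice form, after reordering, something like $1, x, x^2, \dots, x^{e_i-1}$ times invariant factors — this is where Gutkin's theorem (already invoked in Lemma \ref{prim} for nonvanishing) and the explicit structure of the coinvariant algebra of a cyclic group will be needed.
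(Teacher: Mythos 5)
Your proposal is correct in outline but takes a genuinely different route from the paper. The paper's proof is essentially a citation: it identifies $E$ with the matrix appearing in Gutkin's theorem \cite[Th. 10.13]{[2]} and reads the product formula off the (representation-theoretic) proof given there. You instead propose a self-contained divisibility-plus-degree argument: (i) choosing the $r_i$ homogeneous, $\det(E)$ is homogeneous of degree $\sum_{i=1}^{d}\deg(r_i)=\tfrac{|G|}{2}\sum_{i=1}^{\ell}(e_i-1)$; (ii) at a generic point of $H_i$ the columns of $E$ split into $|G|/e_i$ blocks indexed by the cosets of the cyclic stabilizer $G_{H_i}$, and within one block the columns are $\psi(\zeta^{-j}t)$, $j=0,\dots,e_i-1$, for a single vector of polynomials $\psi$, a primitive $e_i$-th root of unity $\zeta$ and a transverse coordinate $t$, so a Vandermonde specialization extracts $t^{\binom{e_i}{2}}$ from each block and yields $L_{H_i}^{|G|(e_i-1)/2}\mid\det(E)$; (iii) matching degrees and $\det(E)\neq 0$ then force equality up to a nonzero constant. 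This is more elementary and more informative than the citation; its cost is the bookkeeping in (ii), which is best done by replacing the block's columns by $\psi(s_k)$ with independent variables $s_k$, observing that $\prod_{k<k'}(s_k-s_{k'})$ divides the determinant (swapping $s_k$ and $s_{k'}$ swaps two columns), and then specializing $s_k=\zeta^{-j(k)}t$.

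Two small corrections. Your parenthetical claim that ``$\sum_j\binom{d_j}{2}$ equals the number of reflections'' is false (take $d_j=3$); the identity you actually need, and do state correctly, is $\sum_i\deg(r_i)=P'(1)$ for $P(t)=\prod_j(1+t+\cdots+t^{d_j-1})$, which equals $\bigl(\prod_j d_j\bigr)\sum_j\tfrac{d_j-1}{2}=\tfrac{|G|}{2}\,\#{\cal R}=\tfrac{|G|}{2}\sum_{i=1}^{\ell}(e_i-1)$. Also, the ``main obstacle'' you anticipate --- showing the vanishing order is \emph{exactly} $\binom{e_i}{2}$ per coset --- is not needed: the lower bound, the degree count and the nonvanishing of $\det(E)$ (which you, like Lemma \ref{prim}, may take from Gutkin, or deduce from the fact that $r_1,\dots,r_d$ restrict to a basis of the functions on a regular orbit) already determine $\det(E)$ up to the constant $c$.
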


	\begin{proof} Consider the coinvariant algebra $\mathcal{O}_n/(w_1,\cdots,w_n)$ of $G$. By \cite[Cor. 3.29]{[2]} we have that 
	
	\begin{center}
	$\mathcal{O}_n/(w_1,\cdots,w_n)\simeq \mathbb{C}^{d}$,
	\end{center}
	
\noindent	as $\mathbb{C}$-vector spaces, where $d:=|G|$. Take a basis of the coinvariant algebra $\lbrace r_1,r_2,\cdots,r_d \rbrace $ with $r_1=1$ as usually. By \cite[Prop. 3.2]{[2]} we can see $M:=\mathbb{C}^{d}$ as a $G$-module, where the representation of $G$ on $\mathbb{C}^{d} \simeq \mathcal{O}_n/(w_1,\cdots,w_n)$ is the regular representation (see \cite[Sec. 1.2]{[serre]} for the definition of a regular representation of a finite group). Consider the canonical basis

\begin{center}
$\lbrace e_{g_1},e_{g_2},\cdots, e_{g_d}\rbrace $
\end{center}

\noindent of $\mathbb{C}^d$ indexed by the elements of $G$. The dual $M^{\ast}=Hom_{\mathbb{C}}(M,\mathbb{C})$ has dual basis $\lbrace y_1,\cdots,y_d \rbrace$ where $y_j:=e^{g_j}$ is the linear functional from $\mathbb{C}^d$ to $\mathbb{C}$ defined by $y_j(e_{gk})=\delta_{jk}$ (Kronecker symbol). By \cite[Cor. 9.37]{[2]} we have that $\mathcal{H}$ is isomorphic to the coinvariant algebra of $G$ (as $G$-modules). Furthermore, there is a canonical isomorphism

\[(\mathcal{H}\otimes M^*)^G \;\cong\; \operatorname{Hom}_{\mathbb{C}G}(M,\mathcal{H}),\]

\noindent sending $\sum_\ell A_\ell\otimes \phi_\ell$ to the linear operator $v\mapsto \sum_\ell \phi_\ell(v)A_\ell$ (see \cite[Lemma 10.2(ii)]{[2]}). Now construct explicitly $d$ morphisms $\varphi_k\in\operatorname{Hom}_{\mathbb{C}G}(M,\mathcal{H})$ (one for each $k=1,\dots,d$) as follows:

\[\varphi_k : M \longrightarrow \mathcal{H},\qquad \varphi_k(e_{g}) := g\cdot r_k.\]

Note that for $g,g'\in G$ we have that

\[\varphi_k(g'\cdot e_g) = \varphi_k(e_{g'g}) = (g'g)\cdot r_k = g'\cdot(g\cdot r_k) = g'\cdot \varphi_k(e_g).\]

Thus $\varphi_k\in\operatorname{Hom}_{\mathbb{C}G}(M,\mathcal{H})$. Therefore, $\varphi_1,\dots,\varphi_d$ span the space $\operatorname{Hom}_{\mathbb{C}G}(M,\mathcal{H})$. Passing back to the tensor side, the image corresponds to an element $u_k\in (\mathcal{H}\otimes M^*)^G$:
\[u_k = \sum_{j=1}^d A_{kj}\otimes y_j, \quad where \ \ \ A_{kj}\in \mathcal{H}.\]

By definition of the correspondence,
\[A_{kj}=\varphi_k(e_{g_j})=g_j\cdot r_k.\]

So we obtain that:

\begin{equation}\label{eq1b}
A_{kj} = g_j\cdot r_k,\quad 1\le k,j\le d.
\end{equation}

Thus, in this context, the matrix $E$ in Remark \ref{obs}(b) coincides exactly with the matrix $A_{kj}$ in (\ref{eq1b}) (see also \cite[Def. 10.6]{[2]}). Recall that $\mathcal{A}$ is the set of hyperplanes $Fix \ g$, where $g$ is a reflection in $G$. Gutkin's Theorem \cite[Th. 10.13]{[2]} states that there exists a constant $c\ne0$ such that

\[det(E)=c \cdot \prod_{Fix \hspace{0.05cm} g\in\mathcal A}L_{g}^{\,C(Fix \hspace{0.05cm} g,M)}.\]

Here $C(Fix \hspace{0.05cm} g,M)$ is determined by the restriction of $M$ to the cyclic stabilizer $G_{Fix \hspace{0.05cm} g}$. For the regular representation, one finds that:

\[C(Fix \hspace{0.05cm} g,M)=|G|\cdot \left( \frac{ord(g)-1}{2}\right) ,\]

\noindent (see for instance the proof of \cite[Th. 10.13]{[2]}), this completes the proof.\end{proof}
	
	\begin{corollary}\label{matrix2} Let $f:(\mathbb{C}^n,0)\longrightarrow (\mathbb{C}^{n+1},0)$, $f(\textbf{x})=(w(\textbf{x}),h(\textbf{x}))$, be a reflected graph map germ. Then
		
		\begin{equation*}
			\lambda[\textbf{X},Z]=w^{\ast}\left( \dfrac{1}{\prod_{i=1}^{\ell}{L_{H_i}}^{|G|(e_i-1)/2}}  E \cdot A_Z \cdot Adj(E) \right)
		\end{equation*}
		
		\noindent where $Adj(E)$ denotes the adjoint matrix of $E$ (the transpose of the cofactor matrix of $E$).
		
	\end{corollary}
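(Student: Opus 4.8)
The plan is to combine the two results just established. From Theorem~\ref{c3} we already have the clean factorization
\[
\lambda[\textbf{X},Z]=w^{\ast}(E\cdot A_Z\cdot E^{-1}),
\]
so the only task is to rewrite $E^{-1}$ in a way that eliminates the division hidden inside matrix inversion and replaces it by the single explicit scalar $\det(E)$. First I would invoke the standard identity $E^{-1}=\dfrac{1}{\det(E)}\,Adj(E)$, valid since we showed in the proof of Lemma~\ref{prim} (via Gutkin's Theorem) that $\det(E)\neq 0$ as an element of $\mathbb{K}=Frac(\mathbb{C}\{\textbf{x}\})$. Substituting this into the expression from Theorem~\ref{c3} gives
\[
\lambda[\textbf{X},Z]=w^{\ast}\!\left(\frac{1}{\det(E)}\,E\cdot A_Z\cdot Adj(E)\right).
\]

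Next I would feed in the closed form for $\det(E)$ supplied by Proposition~\ref{det}, namely $\det(E)=c\prod_{i=1}^{\ell}L_{H_i}^{|G|(e_i-1)/2}$ for some nonzero constant $c$. Plugging this in yields
\[
\lambda[\textbf{X},Z]=w^{\ast}\!\left(\frac{1}{c\prod_{i=1}^{\ell}L_{H_i}^{|G|(e_i-1)/2}}\,E\cdot A_Z\cdot Adj(E)\right),
\]
and since $w^{\ast}$ is a ring homomorphism on the relevant rings of invariants, the constant $c$ can be absorbed (or one simply notes that the corollary is stated up to such a nonzero constant, exactly as Proposition~\ref{det} is). This is essentially the whole argument — it is a direct algebraic substitution requiring no new ideas.

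The one point that deserves a sentence of care, and which I regard as the only real obstacle, is making sure the manipulation is legitimate \emph{before} applying $w^{\ast}$: the entries of $E$, $A_Z$, and $Adj(E)$ are elements of $\mathbb{C}\{\textbf{x}\}$ (or of $\mathbb{K}$), and the product $\frac{1}{\det(E)}E\cdot A_Z\cdot Adj(E)$ a priori lives in $\mathrm{Mat}_{d\times d}(\mathbb{K})$; what makes the statement meaningful is that this product equals $\lambda[\textbf{w},Z]$, whose entries are $G$-invariant polynomials, so that $w^{\ast}$ (which is only defined on invariant functions, per Remark~\ref{obs}(a)) can indeed be applied. This invariance is precisely what Theorem~\ref{c3} guarantees, so I would phrase the proof as: start from Theorem~\ref{c3}, expand $E^{-1}$ via the adjugate, substitute the value of $\det(E)$ from Proposition~\ref{det}, and observe that the resulting matrix still has invariant entries so that $w^{\ast}$ applies, giving the claimed formula. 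The whole proof should be three or four lines.
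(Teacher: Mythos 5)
Your argument is correct and is exactly the intended derivation: the paper states this corollary without proof as an immediate combination of Theorem~\ref{c3} (the factorization $\lambda[\textbf{X},Z]=w^{\ast}(E\cdot A_Z\cdot E^{-1})$) with the adjugate identity and the value of $\det(E)$ from Proposition~\ref{det}. Your remark about the nonzero constant $c$ being suppressed (or absorbed) in the stated formula, and about the entries being $G$-invariant so that $w^{\ast}$ applies, are the right points of care and match what the paper implicitly relies on.
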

	
	\begin{example}\label{z2} Consider $G=\mathbb{Z}_2 \times \mathbb{Z}_2$ and a double fold map germ $f:(\mathbb{C}^2,0)\rightarrow (\mathbb{C}^{3},0)$ given by $f(x,y)=(x^2,y^2, h(x,y))$. We can write $h(x,y)=xp_1+yp_2+xyp_3$, where $p_i=p_i(x^2,y^2)$. Note that $r_1=1$, $r_2=x$, $r_3=y$ and $r_4=xy$ generates $\mathbb{C}[x,y]/(x^2,y^2)$ as a $\mathbb{C}$-complex vector space. The matrices $E$, $A_Z$ and $Adj(E)$ are
		
		$$E=\begin{bmatrix}
			1 & 1 & 1 & 1 \\
			x & -x & x & -x \\
			y & y & -y & -y \\
			xy & -xy & -xy & xy \
		\end{bmatrix} \hspace{0.3cm} \textrm{,} \hspace{0.3cm} Adj(E)=\begin{bmatrix}
			4x^2y^2 & 4xy^2 & 4x^2y & 4xy \\
			4x^2y^2 & -4xy^2 & 4x^2y & -4xy \\
			4x^2y^2 & 4xy^2 & -4x^2y &-4xy \\
			4x^2y^2 & -4xy^2 & -4x^2y & 4xy
		\end{bmatrix},$$
		
		and
		
		$$A_Z=\begin{bmatrix}
			h_1-Z & 0 & 0 & 0 \\
			0 & h_2-Z & 0 & 0 \\
			0 & 0 & h_3-Z & 0 \\
			0 & 0 & 0 & h_4-Z
		\end{bmatrix},$$
		
		\noindent where $h_1=xp_1+yp_2+xyp_3$, $h_2=-xp_1+yp_2-xyp_3$, $h_3=xp_1-yp_2-xyp_3$ and $h_4=-xp_1-yp_2+xyp_3$.\\
		
		The group $\mathbb{Z}_2 \times \mathbb{Z}_2$ has two reflecting hyperplanes given by $L_{g_1}=\textbf{V}(x)$ and $L_{g_2}=\textbf{V}(y)$. All reflections of G have order 2, therefore $e_1=e_2=2$. By Proposition \ref{det} we obtain that $det(E)=cx^2y^2$ for some constant $c$. Actually, $det(E)=16x^2y^2$. 
		
		Making the product $E \cdot A_Z \cdot Adj(E)$ and multiplying the entries of the resulting matrix by $1/16x^2y^2$ we obtain the following matrix
		
		$$\lambda[w,Z]=\begin{bmatrix}
			-Z & p_1 & p_2 & p_3 \\
			x^2p_1 & -Z & x^2p_3 & p_2 \\
			y^2p_2 & y^2p_3 & -Z & p_1 \\
			x^2y^2p_3 & y^2p_2 & x^2p_1 & -Z \
		\end{bmatrix}.$$
		
		\noindent Now, making the change of $x^2\mapsto X$, $y^2\mapsto Y$ in $\lambda[w,Z]$ we obtain the presentation matrix
		
		$$\lambda[X,Y,Z]=\begin{bmatrix}
			-Z & p_1 & p_2 & p_3 \\
			Xp_1 & -Z & Xp_3 & p_2 \\
			Yp_2 & Yp_3 & -Z & p_1 \\
			XYp_3 & Yp_2 & Xp_1 & -Z \
		\end{bmatrix},$$
		
		\noindent which is exactly the same matrix $\lambda[X,Y,Z]$ that appears in the proof of (\cite[Prop. 3.1]{[3]}).
	\end{example}
	
	\section{The image of a reflected graph map}\label{sectionimage}
	
	$ \ \ \ \ $ In this section, we provide an answer to Question $1$ in Introduction. Specifically, we give a a defining equation of the image of a reflected graph map germ $f$ from $(\mathbb{C}^n,0)$ to $(\mathbb{C}^{n+1},0)$ in terms of the action of the reflection group $G$ on $h$ (the last coordinate function of $f$). Let us start with an example to illustrate our answer to Question 1. \\ 
	
	Consider the reflection group $G=\mathbb{Z}_4$ acting on $\mathbb{C}^2 \times \mathbb{C}$ (trivially on the second factor) with orbit map $w=(x,y^4,z)$. Let $f:(\mathbb{C}^2,0) \longrightarrow (\mathbb{C}^3,0)$ be a reflected graph map germ, given by
	\begin{equation*}
		f(x,y)=(x,y^4,yp_1+y^2p_2+y^3p_3),
	\end{equation*} 
	where $p_i=p_i(x,y^4)$. Note that $r_1=1$, $r_2=y$, $r_3=y^2$ and $r_4=y^3$ generates $\mathbb{C}[x,y]/(x,y^4)$ as a complex vector space. Applying Theorem \ref{c3} we obtain a presentation matrix of $f_*({\cal O}_2)$ as a ${\cal O}_3$-module via $f$ given by
	$$\lambda[X,Y,Z]=\begin{bmatrix}
		-Z & p_1 & p_2 & p_3 \\
		Yp_3 & -Z & p_1 & p_2  \\
		Yp_2 & Yp_3 & -Z & p_1 \\
		Yp_1 & Yp_2 & Yp_3 & -Z \
	\end{bmatrix}.$$.
	
	Therefore, calculating the determinant of $\lambda[X,Y,Z]$, a defining equation of the image of $f$ is given by
	\begin{center}
		$F(X,Y,Z)=Z^4+Q_2Z^2-Q_1Z+Q_0$,
	\end{center}
	
	where
	$$\begin{array}{rcl} 
		Q_2 & = & -4Yp_1p_3-2p_2^2, \\
		Q_1 & = & 4Yp_1^2p_2+4Y^2p_2p_3^2, \\  
		Q_0 & = & Y^2(p_2^4-4p_1p_2^2p_3+2p_1^2p_3^2)-Y^3p_3^4-Yp_1^4. 
	\end{array}$$
	
	Now, set $q_i(x,y):=w_{\ast}(Q_i)=Q_i(x,y^4)$. Thus we obtain that
	
	$$\begin{array}{rcl} 
		q_2(x,y) & = & -4y^4p_1p_3-2p_2^2, \\
		q_1(x,y) & = & 4y^4p_1^2p_2+4y^8p_2p_3^2, \\  
		q_0(x,y) & = & y^8(p_2^4-4p_1p_2^2p_3+2p_1^2p_3^2)-y^{12}p_3^4-y^4p_1^4, 
	\end{array}$$
	
	\noindent where now we have $p_i=p_i(x,y^4)$.\\ 
	
	A curious fact is that we can express each $q_i$ in terms of symmetric polynomials in the ``variables'' $h_i$, more precisely:
	$$\begin{array}{rcl} 
		q_2(x,y) & = & h_1h_2+h_1h_3+h_1h_4+h_2h_3+h_3h_4, \\
		q_1(x,y) & = & h_1h_2h_3+h_1h_2h_4+h_1h_3h_4+h_2h_3h_4, \\  
		q_0(x,y) & = & h_1h_2h_3h_4,
	\end{array}$$
	
	\noindent where, $h_i={g_i}_{\bullet}(xp_1+y^2p_2+y^3p_3)$, more precisely, $h_1=yp_1+y^2p_2+y^3p_3$, $h_2=-yp_1+y^2p_2-y^3p_3$, $h_3=iyp_1-y^2p_2-iy^3p_3$ and $h_4=-iyp_1-y^2p_2+iy^3p_3$.\\ 
	
	Note that in particular, the polynomials $q_i$ are invariant under the action of $G$. We will show in this section that this same behaviour happens for any reflected graph map germ.
	
	\begin{lemma}\label{LEMMA} Let $G=\{g_1=Id,g_2,...,g_d\}$ be a reflection group and $h$ be a function in $\mathbb{C}\lbrace \textbf{x} \rbrace$. Define $h_{i}$ by ${g_i}_{\bullet}h$, that is, $h_{i}$ is the action of $g_i$ on $h$. Then for all $k \in \{1,2,...,d\}$ the polynomial
		\begin{equation*}
			q_{d-k}:=\sum_{i_1<i_2<\cdots<i_d} h_{i_1}h_{i_2}\cdots h_{i_k}
		\end{equation*}
		is invariant under the action of $G$. In particular, $q_{d-k} \in \mathbb{C}[w_1,\cdots, w_d]$.
	\end{lemma}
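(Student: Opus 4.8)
The plan is to recognize $q_{d-k}$ as (up to reindexing) the $k$-th elementary symmetric polynomial $e_k(h_1,\dots,h_d)$ in the $d$ quantities $h_i={g_i}_{\bullet}h$, and then to show that the action of any $g\in G$ merely permutes the tuple $(h_1,\dots,h_d)$, so that it fixes every symmetric function of the $h_i$; the last clause is then immediate from the fact that $S^G=\mathbb{C}[w_1,\dots,w_n]$.

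First I would record the composition law for the action on $S$: from $(g_{\bullet}P)(v)=P(g^{-1}v)$ one checks directly that $g_{\bullet}(g'_{\bullet}P)=(gg')_{\bullet}P$, i.e. this is a genuine left action. Hence for a fixed $g\in G$ we get $g_{\bullet}h_i=g_{\bullet}({g_i}_{\bullet}h)=(gg_i)_{\bullet}h$. Since left translation $g_i\mapsto gg_i$ is a bijection of the finite set $G=\{g_1,\dots,g_d\}$, there is a permutation $\sigma$ of $\{1,\dots,d\}$ (depending on $g$) with $gg_i=g_{\sigma(i)}$, so $g_{\bullet}h_i=h_{\sigma(i)}$. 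In words, $g$ permutes the list $h_1,\dots,h_d$ among themselves; note that no distinctness of the $h_i$ is needed, since $e_k$ is invariant under arbitrary permutations of its arguments.

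Next, by Lemma \ref{li} the operator $P\mapsto g_{\bullet}P$ is a $\mathbb{C}$-algebra homomorphism of $S$ (it ``preserves the algebraic structure''), so it commutes with forming sums and products. Therefore
\[
g_{\bullet}q_{d-k}
= g_{\bullet}\!\!\sum_{i_1<\cdots<i_k}\!\! h_{i_1}\cdots h_{i_k}
= \sum_{i_1<\cdots<i_k} (g_{\bullet}h_{i_1})\cdots(g_{\bullet}h_{i_k})
= \sum_{i_1<\cdots<i_k} h_{\sigma(i_1)}\cdots h_{\sigma(i_k)}
= q_{d-k},
\]
the last equality holding because $\sigma$ merely reorders the unordered $k$-element subsets $\{i_1,\dots,i_k\}$. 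As $g\in G$ was arbitrary, $q_{d-k}\in S^G$.

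Finally, for the ``in particular'' clause I would invoke the fact (Shephard--Todd's theorem, which is precisely how the orbit map $w=(w_1,\dots,w_n)$ was defined) that the invariant algebra $S^G$ is generated by the coordinate functions of $w$, i.e. $S^G=\mathbb{C}[w_1,\dots,w_n]$; hence $q_{d-k}$, being $G$-invariant, is a polynomial in $w_1,\dots,w_n$. There is no genuine obstacle in this argument: the only points requiring a little care are the left-action composition law $g_{\bullet}(g'_{\bullet}h)=(gg')_{\bullet}h$ and the observation that the symmetry of $e_k$ makes the computation insensitive to possible repetitions among the $h_i$.
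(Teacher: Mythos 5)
Your proof is correct, but it takes a different route from the paper's. The paper observes that $q_{d-k}$ is a symmetric polynomial in $h_1,\dots,h_d$, invokes the fundamental theorem expressing symmetric polynomials in terms of the power sums $m_l = h_1^l+\cdots+h_d^l$, and then notes that each $m_l$ equals $|G|\cdot Av(h^l)$ and is therefore $G$-invariant by the Reynolds operator; invariance of $q_{d-k}$ follows because it is a polynomial in the $m_l$. You instead prove invariance directly: the composition law $g_{\bullet}(g'_{\bullet}h)=(gg')_{\bullet}h$ together with the bijectivity of left translation shows that each $g\in G$ permutes the list $h_1,\dots,h_d$, and since $g_{\bullet}$ is an algebra homomorphism and $q_{d-k}=e_k(h_1,\dots,h_d)$ is symmetric in its arguments, it is fixed. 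Your argument is more elementary and arguably cleaner: it avoids the detour through Newton's identities and sidesteps the slight awkwardness of treating the (possibly non-distinct, algebraically dependent) $h_i$ as ``variables'' of a symmetric polynomial ring, a point you explicitly and correctly note does not matter for your computation. The paper's route has the merit of reusing the Reynolds operator introduced in its preliminaries, but the underlying mechanism --- that $G$ permutes the $h_i$ --- is the same in both proofs; the paper just applies it only to the power sums. Your handling of the final clause (Shephard--Todd/Chevalley giving $S^G=\mathbb{C}[w_1,\dots,w_n]$) matches the paper's intent, and in fact corrects its apparent typo $\mathbb{C}[w_1,\dots,w_d]$.
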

	
	\begin{proof} Consider the ring $A:=\mathbb{C}[h_1,h_2,...,h_d]$ and let $Sym(A)$ be the set of symmetric polynomials in the ``variables'' $h_1,h_2,...,h_d$. Note that by definition $q_{d-k} \in Sym(A)$ for each $k$. For all $l \geq 1$ consider the power sum symmetric polynomial $m_l={h_1}^l+{h_2}^l+\cdots+{h_d}^l$. A well-known result states that any symmetric polynomial in $h_1, ..., h_d$ can be expressed as a polynomial expression with rational coefficients in the power sum symmetric polynomials $m_1,\cdots,m_d$ (see \cite[Th. 6.1]{lang}). In other words, the $m_1,m_2,...,m_d$ are the generators of the algebra of symmetric polynomials $Sym(A)$, that is,
		\begin{equation*}
			Sym(A)=\mathbb{C}[m_1,...,m_d].
		\end{equation*}
		
		Since $q_{d-k} \in Sym(A)$, there is $Q_{d-k} \in \mathbb{C}[m_1,m_2,...,m_d]$, such that $q_{d-k}=Q_{d-k}(m_1 ,m_2,...,m_d)$. From the fact that the Reynolds operator is a projection, it follows that each $m_i$ is invariant under the action of $G$. Therefore, $q_{d-k}$ is also invariant under the action of $G$.\end{proof}\\
	
	Now, since each $q_{d-k}$ is invariant under the action of $G$, i.e., we can think that $q_{d-k}$ is in $\mathbb{C}[w_1,\cdots, w_d]$. Therefore, we can define $w^{\ast}(q_{d-k}(w_1,\cdots,w_d))=:Q_{d-k}(X_1,\cdots,X_d)$, i.e, we can change the ``variable'' $w_i$ by the (target) variable $X_i$ in $q_{d-k}$. We are now able to present a defining equation of the image of a reflected graph map germ. 
	
	\begin{theorem}\label{teeeo} Let $G$ be a reflection group of order $d$. Let $f=(\textbf{w},h)$ be a reflected graph map germ from $(\mathbb{C}^n,0)$ to $(\mathbb{C}^{n+1},0)$. The image of $f$ is given as the zero set of $F$ (which is the determinant of the presentation matrix of $f_*{\cal O}_n$) where $F$ is described as the following alternating sum 
		\begin{equation}\label{imagem}
			F(\textbf{X},Z)= Z^d-Q_{d-1}Z^{d-1}+Q_{d-2}Z^{d-2}+\cdots+(-1)^{d-1}Q_1Z+(-1)^dQ_0,
		\end{equation}
		\noindent where $Q_{d-k}=q_{d-k}(\textbf{X})$, and $q_{d-k}$ is described in Lemma \ref{LEMMA}.
	\end{theorem}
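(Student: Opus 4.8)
The plan is to combine Lemma~\ref{prim} with the fundamental theorem on elementary symmetric polynomials. By Lemma~\ref{prim}, the pullback of a defining equation $F$ for the image $\mathcal{X}=f(\mathbb{C}^n)$ under the orbit map $w$ factors as
$$
F(\textbf{w},Z)=\prod_{k=1}^{d}\bigl(Z-{g_k}_{\bullet}(h(\textbf{x}))\bigr)=\prod_{k=1}^d(Z-h_k).
$$
Expanding this product over the ground field $\mathbb{K}=\mathrm{Frac}(\mathbb{C}\{\textbf{x}\})$, the coefficient of $Z^{d-k}$ is $(-1)^k e_k(h_1,\dots,h_d)$, where $e_k$ is the $k$-th elementary symmetric polynomial in the $h_i$; but $e_k(h_1,\dots,h_d)=\sum_{i_1<\cdots<i_k}h_{i_1}\cdots h_{i_k}=q_{d-k}$ in the notation of Lemma~\ref{LEMMA}. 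Hence
$$
F(\textbf{w},Z)=Z^d-q_{d-1}Z^{d-1}+q_{d-2}Z^{d-2}+\cdots+(-1)^{d-1}q_1Z+(-1)^d q_0.
$$

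Next I would invoke Lemma~\ref{LEMMA}, which guarantees that each $q_{d-k}$ is $G$-invariant, hence lies in $\mathbb{C}[w_1,\dots,w_n]$ (by Shephard--Todd; here I write $w_1,\dots,w_n$ rather than $w_1,\dots,w_d$ since $G$ acts on $\mathbb{C}^n$, the indexing in the lemma being a harmless abuse). Therefore the substitution $w_i\mapsto X_i$ is well defined on each $q_{d-k}$, producing the polynomials $Q_{d-k}(\textbf{X})=q_{d-k}(\textbf{X})\in\mathbb{C}\{\textbf{X}\}$, and the polynomial
$$
F(\textbf{X},Z)=Z^d-Q_{d-1}Z^{d-1}+Q_{d-2}Z^{d-2}+\cdots+(-1)^{d-1}Q_1Z+(-1)^d Q_0
$$
satisfies $w^{\ast}(F(\textbf{X},Z))=F(\textbf{w},Z)$, exactly the displayed factorization above.

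It remains to check that this $F(\textbf{X},Z)$ is genuinely a defining equation for $\mathcal{X}$, i.e.\ that it agrees (up to a unit) with the determinant of the presentation matrix $\lambda[\textbf{X},Z]$ of $f_*\mathcal{O}_n$. For this I would argue that the pullback map $w^{\ast}\colon\mathbb{C}\{\textbf{X},Z\}\to\mathbb{C}\{\textbf{x},Z\}$ is injective (since $w$ is a finite surjection, or equivalently $\mathbb{C}\{\textbf{X}\}\hookrightarrow\mathbb{C}\{\textbf{x}\}$ is the inclusion of invariants), so that an invariant power series is determined by its pullback. Both $w^{\ast}(F(\textbf{X},Z))$ and $w^{\ast}(\det\lambda[\textbf{X},Z])=\det\lambda[\textbf{w},Z]$ equal $\prod_{k=1}^d(Z-h_k)$ by Lemma~\ref{prim}; by injectivity of $w^{\ast}$ they coincide as elements of $\mathbb{C}\{\textbf{X},Z\}$, whence $F(\textbf{X},Z)=\det\lambda[\textbf{X},Z]$ up to a constant, and the zero set of $F$ is precisely the image of $f$.

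The main obstacle, and the one deserving a careful sentence, is the passage from the factorization over $\mathbb{K}$ back to an identity of honest power series with invariant (hence $X$-expressible) coefficients: a priori the $q_{d-k}$ live in $\mathbb{C}\{\textbf{x}\}$, and one must be sure the elementary-symmetric-function computation is valid there and that the resulting coefficients really are $G$-invariant polynomials/series. This is handled precisely by Lemma~\ref{LEMMA} (via the power-sum generators and the Reynolds operator), so no new work is needed; everything else is the bookkeeping of expanding a product and applying $w^{\ast}$ / $w_{\ast}$.
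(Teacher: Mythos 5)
Your proposal is correct and follows essentially the same route as the paper: pull back via $w$, factor using Lemma~\ref{prim}, expand into elementary symmetric polynomials in the $h_i$, and push the $G$-invariant coefficients $q_{d-k}$ forward to $Q_{d-k}(\textbf{X})$ via Lemma~\ref{LEMMA}. Your extra remarks --- the injectivity of $w^{\ast}$ guaranteeing that $F(\textbf{X},Z)$ is determined by its pullback, and the correction of the indexing $\mathbb{C}[w_1,\dots,w_n]$ --- are sound refinements of details the paper leaves implicit, not a different argument.
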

	
	\begin{proof} Consider the image of $f$ given by the zero set of $F(\textit{\textbf{X}},Z)$, where $F$ denotes the determinant of the presentation matrix $\lambda[\textit{\textbf{X}},Z]$ of $f_*{\cal O}_n$ as a ${\cal O}_{n+1} $-module via $f$. Consider the pullback $F(\textit{\textbf{w}},Z)$ of $F(\textit{\textbf{X}},Z)$ by $w$. It follows by Lemma \ref{prim} that
		\begin{equation}
			F(\textit{\textbf{w}},Z)=\prod_{k=1}^{d}\left(Z-{g_k}_{\bullet}(h(\textit{\textbf{x}}))\right).
		\end{equation}
		
		\noindent Considering $h_k:={g_k}_{\bullet} h$ as in Lemma \ref{LEMMA}, we have that
		
		\begin{equation}\label{exp1}
			F(\textit{\textbf{w}},Z) = (Z-h_1)(Z-h_2)\cdots(Z-h_d).
		\end{equation}	
		
		\noindent Expanding the expression (\ref{exp1}), it follows from Lemma \ref{LEMMA} that 
		
		\begin{equation*}
			F(\textit{\textbf{w}},Z)= Z^d-q_{d-1}Z^{d-1}+q_{d-2}Z^{d-2}+\cdots+(-1)^{d-1}q_1Z+(-1)^dq_0.
		\end{equation*} 
	Since the polynomials $q_{d-k}$ are invariant, we can consider $w^{\ast}(q_{d-k}(w_1,\cdots,w_d)):=Q_{d-k}(X_1,\cdots,X_d)$, therefore  
		\begin{equation}\label{imagec}
			F(\textit{\textbf{X}},Z)= Z^d-Q_{d-1}Z^{d-1}+Q_{d-2}Z^{d-2}+\cdots+(-1)^{d-1}Q_1Z+(-1)^dQ_0.
		\end{equation}
		as desired.\end{proof}
	
	\section{The multiplicity of a reflection map germ}
	
	$ \ \ \ \ $  In this section, we will present an upper (and also a lower) bound for the multiplicity of the image of a generically $1$-to-$1$ reflection map germ $f$ from $(\mathbb{C}^n,0)$ to $(\mathbb{C}^{n+1},0)$, in general, not necessarily a reflected graph map. To state our result, we begin by recalling the notion of multiplicity. \
	
	Consider a germ of analytic function $F:(\mathbb{C}^{n+1},0)\longrightarrow (\mathbb{C},0)$ reduced at the origin with $F \neq 0$. Let $(\textbf{V}(F),0)$ be the germ of the zero set of $F$ at the origin. Write
	\begin{equation*}
		F=F_m+F_{m+1}+\cdots+F_k+\cdots
	\end{equation*}
	where each $F_k$ is a homogeneous polynomial of degree $k$ and $F_m \neq 0$. The integer $m$ is called the multiplicity of $\textbf{V}(F)$ in 0 and is denoted by $m(\textbf{V}(F),0)$. Clearly the multiplicity of $m(\textbf{V}(F),0)$ is greater than or equal to $1$. An important property of the multiplicity is that $m(\textbf{V}(F),0)=1$ if and only if $(\textbf{V}(F),0)$ is non singular.
	
	Once we have the notion of multiplicity in hand, let us return to the Question $3$ in the Introduction. Consider a reflection map germ $f:(\mathbb{C}^n,0)\rightarrow (\mathbb{C}^{n+1},0)$. If $f=(\textit{\textbf{w}},h)$ is a singular generically $1$-to-$1$ reflected graph map germ, then by Theorem \ref{teeeo} we have that 
	
	\begin{equation}\label{eq2}
		F(\textit{\textbf{X}},Z)= Z^d-Q_{d-1}Z^{d-1}+Q_{d-2}Z^{d-2}+\cdots+(-1)^{d-1}Q_1Z+(-1)^dQ_0
	\end{equation}
	
	\noindent is a defining equation of the image of $f$, i.e, $f(\mathbb{C}^n)=\textbf{V}(F)$. Since $f$ is generically $1$-to-$1$, we have that $F$ is reduced (see \cite[Prop. 3.1]{[12]}). As a consequence, we obtain that
	
	\begin{equation}\label{eq3}
		2 \leq m(f(\mathbb{C}^n),0)\leq d=|G|.
	\end{equation}
	
	However, if $f$ is a reflection map germ (not necessarily a reflected graph map germ one), it is not clear how big the multiplicity of the image can be. In Theorem \ref{teo5.2}, we provide an upper bound (and also a lower bound) that generalizes the one given in (\ref{eq3}). Before stating the theorem, we introduce an auxiliary lemma. 

\begin{remark}\label{remark1}	
	Let $(X,0)$ and $(Y,0)$ be germs of irreducible analytic sets. Let $f:(X,0)\rightarrow (Y,0)$ be a finite surjective analytic map germ. We denote the degree of a map $f$ by $deg(f)$. Roughly speaking, the degree of $f$ is the number of pre-images of a generic value in the image of $f$. The precise definition is given for instance in \cite[Def. D.2]{[7]}. An important fact about the concept of the degree of a map is its multiplicative property. Suppose $g:(Y,0)\rightarrow(W,0)$ is a finite surjective analytic map germ, where $(W,0)$ is an irreducible analytic set, then $deg(g\circ f)=deg(g)\cdot deg(f)$.
	
 Suppose that $(X,0)$ is of dimension $n$ and $(X,0)\subset (\mathbb{C}^{n+1},0)$. Let $l_1,l_2,\cdots, l_n:(\mathbb{C}^{n+1},0)\rightarrow (\mathbb{C},0)$ be generic (reduced) linear forms. Let $\pi:(X,0)\rightarrow (\mathbb{C}^n,0)$, $\pi=(l_1,\cdots, l_n)$, be the restriction to $X$ of the (generic) linear projection $\pi$ from $\mathbb{C}^{n+1}$ to $\mathbb{C}^n$. Here, ``\textit{generic linear projection}'' means that $Ker(\pi):= \pi^{-1}(0)$ is a generic line in $\mathbb{C}^{n+1}$ such that $Ker(\pi) \cap X = \{0\}$. 
 
  For a generic $x$ close enough to $0$, $\pi^{-1}(x)$ is a subspace parallel to $Ker(\pi)$ which intersects $X$ in a finite number of points; this number is precisely $m(X,0)$ (see for instance \cite[Sec. D.3]{[7]}). In other words, the multiplicity can be seen as the local intersection number at $0$ of $X$ with a generic line in $\mathbb{C}^{n+1}$. We note that this local intersection number is independent of the choice of the generic line (see \cite[Sec. D.3]{[7]}).
\end{remark}

	\begin{lemma}\label{lemaaux} Consider a reflection map germ $f:(\mathbb{C}^n,0)\rightarrow (\mathbb{C}^{n+1},0)$, $f=w \circ \rho$. As in Remark \ref{remark1}, let $\pi=( l_1,...,l_n)$ be a generic linear projection from $(\mathbb{C}^{n+1},0)$ to $(\mathbb{C}^n,0)$. Consider the image of $f$ with the Fitting structure, that is, $(f(\mathbb{C}^n),0)=(\textbf{V}({\cal F}_0(f_*{ \cal O}_n)),0)$ and $L:(\mathbb{C}^{n+1},0)\rightarrow (\mathbb{C},0)$ be a reduced (non singular) analytic function such that $(\rho(\mathbb{C}^n),0)=(\textbf{V}(L),0)$. If $f$ is generically $1$-to-$1$ then
		
		\begin{equation*}
			m(f(\mathbb{C}^n),0)=\dim_{\mathbb{C}} \dfrac{{\cal O}_{n+1}}{\left\langle L, l_1 \circ w,...,l_n \circ w \right\rangle }.
		\end{equation*}
	\end{lemma}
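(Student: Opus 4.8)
The plan is to compute the multiplicity of the image $(f(\mathbb{C}^n),0)$ via the classical fact that, for a hypersurface germ, the multiplicity equals the intersection number with a generic linear subspace of complementary dimension. Concretely, since $(f(\mathbb{C}^n),0) = (\textbf{V}(F),0)$ with $F = {\cal F}_0(f_*{\cal O}_n)$ reduced (by generic $1$-to-$1$-ness and \cite[Prop. 3.1]{[12]}), one has
\begin{equation*}
m(f(\mathbb{C}^n),0) = \dim_{\mathbb{C}} \dfrac{{\cal O}_{n+1}}{\left\langle F, l_1,\dots,l_n \right\rangle}
\end{equation*}
for a generic linear projection $\pi = (l_1,\dots,l_n)$. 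Here one must check that $\langle F, l_1,\dots,l_n\rangle$ is ${\bf m}$-primary in ${\cal O}_{n+1}$, i.e. that the generic line $\textbf{V}(l_1,\dots,l_n)$ meets $\textbf{V}(F)$ only at the origin (true since $\dim \textbf{V}(F) = n$), and that the intersection multiplicity along that line equals the multiplicity of $F$ at $0$ — this is the standard Samuel-multiplicity / degree-of-the-tangent-cone statement, and for generic choice of $\pi$ the count is exactly $m(\textbf{V}(F),0)$.

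The second step is to transport this colength computation from the target ${\cal O}_{n+1}$ "downstairs through $w$" to identify it with $\dim_{\mathbb{C}} {\cal O}_{n+1}/\langle L, l_1\circ w,\dots,l_n\circ w\rangle$. The key point is that $w:(\mathbb{C}^{n+1},0)\to(\mathbb{C}^{n+1},0)$ is the orbit map, hence finite of degree $d = |G|$, and $w^{-1}(w({\cal Y})) = \widetilde{\cal Y}$ is the union of the $d$ translates ${g_i}_\bullet {\cal Y}$ by Noether's theorem. Since $L$ is a nonsingular equation for ${\cal Y} = \rho(\mathbb{C}^n)$, the product $\prod_{i} {g_i}_\bullet L$ (or, equivalently, the pullback $F\circ w$ up to a unit — this is exactly the content of Lemma \ref{prim} in the reflected-graph case, and the analogue for a general reflection map that $F(w) $ defines $\widetilde{\cal Y}$) is a defining equation for $\widetilde{\cal Y}$. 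I would then set up the finite flat map $w$ and use the projection/pushforward formula for intersection numbers: for a finite map of degree $d$ and a generic complete intersection ideal $\langle l_1,\dots,l_n\rangle$ in the target, the colength of $\langle F, l_1,\dots,l_n\rangle$ in ${\cal O}_{n+1}$ and the colength of its pullback $\langle F\circ w, l_1\circ w,\dots,l_n\circ w\rangle$ in the source ${\cal O}_{n+1}$ differ exactly by the factor $d$ — but the pullback $F\circ w$ factors as (unit) $\cdot \prod_i {g_i}_\bullet L$, and by $G$-symmetry of the generic linear system each of the $d$ factors ${g_i}_\bullet L$ contributes the same colength $\dim_{\mathbb{C}} {\cal O}_{n+1}/\langle {g_i}_\bullet L, l_1\circ w,\dots,l_n\circ w\rangle$, all equal to $\dim_{\mathbb{C}} {\cal O}_{n+1}/\langle L, l_1\circ w,\dots,l_n\circ w\rangle$ after applying the automorphism $g_i$. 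The factor $d$ from the degree of $w$ and the $d$ identical summands cancel, yielding the claimed equality.

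Alternatively — and this is probably the cleaner route to write up — I would argue directly: pull back the generic line $\ell = \textbf{V}(l_1,\dots,l_n)$ through $w$ to get the curve $w^{-1}(\ell)$, observe $w^{-1}(\ell) \cap \widetilde{\cal Y}$ maps $d$-to-$1$ onto $\ell\cap{\cal X}$, so the total intersection number of $w^{-1}(\ell)$ with $\widetilde{\cal Y} = \bigcup {g_i}_\bullet{\cal Y}$ is $d\cdot m(f(\mathbb{C}^n),0)$; on the other hand this total number is $\sum_{i=1}^d (w^{-1}(\ell)\cdot {g_i}_\bullet{\cal Y})_0$, and since $w$ is $G$-equivariant and $\ell$ is generic (so its preimage is $G$-invariant as a cycle and meets each ${g_i}_\bullet{\cal Y}$ transversally-enough at $0$ with the same number), each term equals $(w^{-1}(\ell)\cdot{\cal Y})_0 = \dim_{\mathbb{C}}{\cal O}_{n+1}/\langle L, l_1\circ w,\dots,l_n\circ w\rangle$. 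Again the $d$'s cancel. Along the way I use generic $1$-to-$1$-ness of $f$ precisely to guarantee that over a generic point of ${\cal X}$ the fiber of $f$ is a single reduced point, so that $\rho$ maps $\mathbb{C}^n$ birationally onto ${\cal Y}$ and ${\cal Y}$ meets no other translate ${g_i}_\bullet{\cal Y}$ in codimension $0$ — without this the equality can fail.

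The main obstacle I anticipate is the genericity bookkeeping: one has to choose $\pi$ generic enough simultaneously for several purposes — so that $\ell\cap{\cal X} = \{0\}$ as germs, so that the intersection multiplicity $(\ell\cdot{\cal X})_0$ realizes $m({\cal X},0)$ rather than something larger, so that $w^{-1}(\ell)$ is a reduced curve germ meeting ${\cal Y}$ "cleanly", and so that the $G$-translates contribute equally. Each of these is a nonempty-Zariski-open condition on the coefficients of $\pi$, but making the equivariance argument (that $w^{-1}(\ell)$ as a cycle is $G$-stable and each translate of ${\cal Y}$ sees the same local intersection number) fully rigorous — rather than just heuristically — is where the care is needed; I would handle it by invoking the standard associativity and $G$-invariance properties of local intersection multiplicities for the finite map $w$, together with the projection formula $\deg(w)\cdot (\ell\cdot{\cal X})_0 = (w^*\ell \cdot w^*{\cal X})_0$.
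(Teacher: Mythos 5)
Your argument is correct in outline, but it takes a genuinely different route from the paper. You work in the target: you realize $m(f(\mathbb{C}^n),0)$ as the colength of $\langle F, l_1,\dots,l_n\rangle$ for the reduced Fitting equation $F$, pull everything back through the orbit map $w$, and use the projection formula for the finite flat map $w$ (degree $d$, with $w^{-1}(0)=\{0\}$) together with the factorization of $F\circ w$ into the $d$ translates ${g_i}_{\bullet}L$ and $G$-equivariance of the generic linear system to split the upstairs intersection number into $d$ equal local contributions, after which the two factors of $d$ cancel. The paper's proof is shorter and avoids all of this intersection-theoretic bookkeeping: it uses only that $m(\mathcal{X},0)=\deg(\pi_{\mid\mathcal{X}})$ for a generic linear projection $\pi$, that degrees multiply under composition so $\deg(\pi\circ w_{\mid\mathcal{Y}})=\deg(w_{\mid\mathcal{Y}})\cdot\deg(\pi_{\mid\mathcal{X}})$, and that generic $1$-to-$1$-ness forces $\deg(w_{\mid\mathcal{Y}})=1$; since $\mathcal{Y}$ is smooth, $\deg(\pi\circ w_{\mid\mathcal{Y}})$ is exactly the stated colength. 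Your approach buys a more explicit geometric picture (and essentially reproves the inequality machinery later used in Theorem \ref{teo5.2}), but it leans on two facts you would still have to establish: that $F\circ w$ defines $\widetilde{\mathcal{Y}}$ with its reduced structure for a \emph{general} reflection map (the paper's Lemma \ref{prim} proves this only for reflected graphs; for the general case you need that generic $1$-to-$1$-ness makes all $d$ translates ${g_i}_{\bullet}\mathcal{Y}$ distinct and each appear with multiplicity one), and the freeness of $\mathcal{O}_{n+1}$ over the invariant ring (Chevalley) to justify the factor-of-$d$ step in the projection formula. The paper's degree argument sidesteps both. Your genericity concerns are real but standard and resolvable; they do not constitute a gap.
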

	
	\begin{proof} As in Section \ref{secpres}, set $(f(\mathbb{C}^n),0)=(\mathcal{X},0)$ and $(\rho(\mathbb{C}^n),0)=(\mathcal{Y},0)$.  Let $\pi: (\mathbb{C}^{n+1},0) \longrightarrow (\mathbb{C}^n,0)$ be a generic linear projection,  $\pi(\textit{\textbf{x}},x_{n+1})=(l_1(\textit{\textbf{x}},x_{n+1}),...,l_n(\textit{\textbf{x}},x_{n+1}))$. By Remark \ref{remark1} we obtain that:
		
		\begin{center}
			$deg(\pi \circ w_{\mid_{\cal Y}})=deg(w_{\mid_{\cal Y}})\cdot deg (\pi_{\mid_{\cal X}})$ $ \ \ \ $ and $ \ \ \ $ $m(f(\mathbb{C}^n),0)=deg(\pi_{\mid_{\cal X}})$. 
		\end{center}
		
		\noindent Furthermore,
		\begin{equation*}
			1=deg(f)=deg(\rho \circ w_{\mid_{\cal Y}})=deg(\rho)\cdot deg (w_{\mid_{\cal Y}})=deg (w_{\mid_{\cal Y}}).
		\end{equation*}
		
		\noindent Hence, $deg(\pi \circ w_{\mid_{\cal Y}})= deg (\pi_{\mid_{\cal X}})$. Therefore, $m(f(\mathbb{C}^n)_{red})=deg(\pi \circ w_{\mid_{\cal Y}})$. Finally, we have that
		\begin{equation}
			deg(\pi \circ w_{\mid_{\cal Y}})=\dim_{\mathbb{C}} \dfrac{{\cal O}_{\mathcal{Y},0}}{\left\langle l_1(x),...,l_n(w)\right\rangle}=\dim_{\mathbb{C}} \dfrac{{\cal O}_{n+1}}{\left\langle L, l_1 \circ w,...,l_n \circ w \right\rangle} .
		\end{equation}
		
		\noindent which concludes the proof.\end{proof}\\
	
	Let $(\mathcal{Y}_1,0),\cdots, (\mathcal{Y}_{n+1},0)$ be germs of hypersurfaces in $(\mathbb{C}^{n+1},0)$. Let $L_1,\cdots, L_{n+1}$ be germs of analytic functions from $(\mathbb{C}^{n+1},0)$ to $(\mathbb{C},0)$ such that $(\mathcal{Y}_i,0)=(\textbf{V}(L_i),0)$. We will denote the intersection multiplicity of $(\mathcal{Y}_1,0),\cdots, (\mathcal{Y}_{n+1},0)$ at $0$ by $i(\mathcal{Y}_1,\cdots, \mathcal{Y}_{n+1})$. If the intersection $(\mathcal{Y}_1,0)\cap \cdots \cap (\mathcal{Y}_{n+1},0)$ is just the origin, then the intersection multiplicity is a finite number and can be calculated as
	
	\begin{center}
		$i(\mathcal{Y}_1,\cdots, \mathcal{Y}_{n+1}) = \dim_{\mathbb{C}} \dfrac{{\cal O}_{n+1}}{\left\langle L_1, L_2 ,...,L_{n+1} \right\rangle}\textcolor{blue}{.} $
	\end{center}
	
	\begin{remark}\label{remarkintersection} We remark that the intersection multiplicity of hypersurfaces is greater than or equal to the product of the multiplicities of each hypersurface, i.e.
		
		\begin{equation}\label{intersection}
			i(\mathcal{Y}_1,\cdots, \mathcal{Y}_{n+1}) \geq m(\mathcal{Y}_1,0) \cdot m(\mathcal{Y}_2,0) \cdots m(\mathcal{Y}_{n+1},0). 
		\end{equation}
		
		\noindent with equality if and only if the intersection $(\mathcal{Y}_1,0)\cap \cdots \cap (\mathcal{Y}_{n+1},0)$ is transversal, i.e., the intersection of tangent cones of $(\mathcal{Y}_j,0)$ is also just the origin $0$ in $\mathbb{C}^{n+1}$. See \cite[Ch. 7]{fulton} and \cite[p. 151]{chirka} for details on the intersection multiplicity of hypersurfaces.
		
	\end{remark}
	
	\begin{theorem}\label{teo5.2} Let $f:(\mathbb{C}^n,0)\longrightarrow (\mathbb{C}^{n+1},0)$, $f=w \circ \rho$, be a generically $1$-to-$1$ reflection map germ, $G$ be a reflection group acting on $\mathbb{C} ^{n+1}$. Let $d_1\leq d_2 \leq  \cdots \leq d_{n+1}$ be the degrees of $G$. Then,
		\begin{equation}\label{eq19}
			d_1 d_2 \cdot ... \cdot d_n \leq m(f(\mathbb{C}^n),0) \leq d_2 d_3 \cdot ... \cdot d_{n+1} \leq |G|.
		\end{equation}
		
	\end{theorem}
	
	\begin{proof}Denote by $(\textit{\textbf{X}})=(X_1,\cdots, X_{n+1})$ the coordinates of $\mathbb{C}^{n+1}$ (the target of $f$).
	
	 Let $L:(\mathbb{C}^{n+1},0)\rightarrow (\mathbb{C},0)$ be a (reduced) non singular analytic function such that $(\rho(\mathbb{C}^n),0)=(\textbf{V}(L),0)$. We have that $\rho:(\mathbb{C}^n,0)\rightarrow (\mathbb{C}^{n+1},0)$ and $w:(\mathbb{C}^{n+1},0)\rightarrow (\mathbb{C}^{n+1},0)$, with $w=(w_1,\cdots,w_{n+1})$. In order to avoid confusion with the notation, let $(\textit{\textbf{x}})=(x_1,...,x_{n})$ be the coordinates of $\mathbb{C}^n$ (the source of $\rho$), $(\textit{\textbf{y}})=(y_1,...,y_{n+1})$ be the coordinates of $\mathbb{C}^{n+1}$ (the source of $w$, equivalently in the target of $\rho$). Let $\pi: (\mathbb{C}^{n+1},0) \longrightarrow (\mathbb{C}^n,0)$ be a generic linear projection, 
	
\begin{center}
	$\pi(\textit{\textbf{X}})=(l_1(\textit{\textbf{X}}),...,l_n(\textit{\textbf{X}}))$, 
\end{center}
	
\noindent in the sense of Remark \ref{remark1}, where for $i=1,\cdots,n$, $l_{i}(\textit{\textbf{X}}):=b_{i,1}X_1+b_{i,2}X_2+\cdots+b_{i,n+1}X_{n+1}$,  $b_{i,j}\in \mathbb{C}$, and 

\begin{center}
$Ker(\pi) \cap f(\mathbb{C}^n)=\lbrace 0 \rbrace$. 
\end{center}

Thus, $l_i \circ w=\sum_{j=1}^{n+1} b_{i,j}w_j$. By the genericity of $\pi$ we can assume that all the $b_{i,j}$ are non-zero complex constants. By Lemma \ref{lemaaux} we have that 	
\begin{equation}\label{eq10}
m(f(\mathbb{C}^n),0) =  dim_{\mathbb{C}} \dfrac{{\cal O}_{n+1}}{\left\langle L, \ \sum_{j=1}^{n+1} b_{1,j}w_j, \  \sum_{j=1}^{n+1} b_{2,j}w_j, \ ... \ , \ \sum_{j=1}^{n+1}b_{n,j}w_j \right\rangle}.
\end{equation}
		
After equivalences in the quotient ring in (\ref{eq10}) and by the genericity of $\pi$, we can find non-zero complex constants $\widetilde{b_{i,j}}$ such that
\begin{equation}\label{eq11}
m(f(\mathbb{C}^n),0) =  dim_{\mathbb{C}} \dfrac{{\cal O}_{n+1}}{\left\langle L, \ \sum_{j=1}^{n+1} \widetilde{b_{1,j}}w_j, \  \sum_{j=2}^{n+1} \widetilde{b_{2,j}}w_j, \  \sum_{j=3}^{n+1} \widetilde{b_{3,j}}w_j, \ ... \ , \ \sum_{j=n}^{n+1}\widetilde{b_{n,j}}w_j \right\rangle}.
\end{equation}

Note that the right-hand side of (\ref{eq11}) can be viewed as an intersection multiplicity of $n+1$ hypersurfaces in $\mathbb{C}^{n+1}$. Using the notation $\mathcal{Y}=V(L)$, we obtain that	
\begin{equation}\label{eq12}
m(f(\mathbb{C}^n),0) = i\left( \mathcal{Y} ,\textbf{V}\left( \sum_{j=1}^{n+1} \widetilde{b_{1,j}}w_j \right), \ \textbf{V}\left( \sum_{j=2}^{n+1} \widetilde{b_{2,j}}w_j \right), \ ... \ , \ \textbf{V}\left( \sum_{j=n}^{n+1} \widetilde{b_{n,j}}w_j \right)\right).
\end{equation}

Note that $d_i=m(\textbf{V}(w_i),0)$ by definition and $m(\mathcal{Y},0)=1$. By (\ref{intersection}) in Remark \ref{remarkintersection} and (\ref{eq12}) we conclude that
\begin{equation*}
m(f(\mathbb{C}^n),0) \geq  d_1d_2 \cdot ... \cdot d_n.
\end{equation*}		
		Now, we need to obtain an upper bound for the multiplicity of the image of $f$. Since $w$ if finite, then $\mathcal{O}_{n+1}/\langle w_1(\textit{\textbf{y}}), \cdots, w_{n+1}(\textit{\textbf{y}}) \rangle < \infty$. Therefore, if $y_i$ is a factor of $w_j$, then $y_i$ is not a factor of $w_s$ for all $s \neq j$. After a change of coordinates in $(\mathbb{C}^{n+1},0)$, we can assume without loss of generality that $y_{n+1}$ does not divide $w_2,\cdots, w_{n+1}$. Since $\rho$ is an embedding, we can write $\rho$ as		
		\begin{center}
		$\rho(\textit{\textbf{x}})=(\hat{l}_1(\textit{\textbf{x}})+g_1(\textit{\textbf{x}}), \ \hat{l}_2(\textit{\textbf{x}})+g_2(\textit{\textbf{x}}), \ \cdots, \ \hat{l}_{n+1}(\textit{\textbf{x}})+g_{n+1}(\textit{\textbf{x}}))$,
		\end{center}		
\noindent where $g_i \in \textbf{m}^2$ for all $i=1,\cdots, n+1$, $\textbf{m}$ denotes the maximal ideal of $\mathbb{C}\lbrace \textit{\textbf{x}} \rbrace$ and $\hat{l}_i(\textbf{x})=a_{i,1}x_1+\cdots+a_{i,n}x_n$. Note that the $n+1$ vectors $v_j:=(a_{1,j},\cdots,a_{n+1,j})$ in $\mathbb{C}^{n+1}$ (with $j=1,\cdots, n+1$) generate the tangent space of $(\mathcal{Y},0)$ (actually, only $n$ vectors are needed). Up to a change of coordinates in $\mathbb{C}^{n+1}$ we can assume that
	\begin{equation}\label{eq14}
\rho(\textit{\textbf{x}})=(x_1+g_1(\textit{\textbf{x}}), \ x_2+g_2(\textit{\textbf{x}}), \ \cdots, \ x_n+g_n(\textit{\textbf{x}}), \ \hat{l}_{n+1}(\textit{\textbf{x}})+ g_{n+1}(\textit{\textbf{x}})),	
	\end{equation}
\noindent and therefore,
	\begin{equation}\label{eq13}
	f(\textit{\textbf{x}})=(w_1(x_1+g_1(\textit{\textbf{x}}),\cdots,\hat{l}_{n+1}(\textit{\textbf{x}})+ g_{n+1}(\textit{\textbf{x}})), \cdots, \ w_{n+1}(x_1+g_1(\textit{\textbf{x}}),\cdots, \hat{l}_{n+1}(\textit{\textbf{x}})+ g_{n+1}(\textit{\textbf{x}}))).
	\end{equation}
		
Note that in (\ref{eq14}) $y_{n+1}$ (the last coordinate of $\mathbb{C}^{n+1}$) is not affected by the last coordinate change. Therefore, we can still assume that $y_{n+1}$ does not divide $w_2,\cdots, w_{n+1}$. Let $\pi_1: (\mathbb{C}^{n+1},0) \longrightarrow (\mathbb{C}^n,0)$ be the linear projection defined by
	
\begin{center}
	$\pi_1(X_1,X_2,\cdots,X_{n+1})=(X_2,\cdots,X_{n+1})$. 
\end{center}
		
\noindent \textbf{Statement:} The map germ $(\pi_1 \circ f):(\mathbb{C}^n,0)\rightarrow (\mathbb{C}^n,0)$ is finite.\\

By (\ref{eq13}) and \cite[Th. D.5]{[7]}, it is sufficient to show that 
\begin{equation}\label{eq15}
\dfrac{\mathbb{C}\lbrace x_1,\cdots,x_n \rbrace}{\langle \ w_2(x_1+g_1(\textit{\textbf{x}}),\cdots,\hat{l}_{n+1}(\textit{\textbf{x}})+ g_{n+1}(\textit{\textbf{x}})), \cdots, \ w_{n+1}(x_1+g_1(\textit{\textbf{x}}), \ \cdots, \hat{l}_{n+1}(\textit{\textbf{x}})+ g_{n+1}(\textit{\textbf{x}})) \ \rangle}		
	\end{equation}
is a finite-dimensional vector space over $\mathbb{C}$.

Note that the quotient ring in (\ref{eq15}) is isomorphic to the following quotient ring 
\begin{equation}\label{eq16}
\dfrac{\mathbb{C}\lbrace x_1,\cdots,x_n,u \rbrace}{\langle \ \hat{l}_{n+1}(\textit{\textbf{x}})+ g_{n+1}(\textit{\textbf{x}})-u, w_2(x_1,\cdots,x_n,u)+ \textbf{...} \ , \ \cdots, \ w_{n+1}(x_1,\cdots,x_n,u)+ \textbf{...}  \ \rangle}		
	\end{equation}
	
\noindent where $\textbf{...}$ appearing in $w_i(x_1,x_2,\cdots,x_n,u)+\textbf{...}$ consists of terms of degree greater than the degree of $w_i$. In other words, $w_i(x_1,x_2,\cdots,x_n,u)+\textbf{...}$ is just the expansion of $w_i(x_1+g_1(\textit{\textbf{x}}),\cdots,x_n+g_n(\textit{\textbf{x}}),u)$, and $u$ is a new variable. 

 Since the ring $\mathcal{O}_{n+1}/\langle w_2(\textit{\textbf{y}}),\cdots,w_{n+1}(\textit{\textbf{y}})\rangle$ is unmixed (it is a Cohen-Macaulay ring), we obtain that the zero-divisors of
\begin{equation}\label{eq17}
R:=\dfrac{\mathbb{C}\lbrace x_1,\cdots,x_n,u \rbrace}{\langle \ w_2(x_1,\cdots,x_n,u) \ , \ \cdots, \ w_{n+1}(x_1,\cdots,x_n,u)  \ \rangle}.		
	\end{equation}

\noindent are precisely the elements in the union of the minimal primes of $R$.

Since $y_{n+1}$ does not divide $w_i(y_1,\cdots,y_n,y_{n+1})$ for all $i\neq 1$, then $u$ does not divide $w_i(x_1,\cdots,x_n,u)$ for all $i\neq 1$. Thus $u$ cannot be contained in any minimal prime of $R$. It follows that $u$ is not a zero divisor of $R$. By a suitable version of Krull's Principal Ideal Theorem, we obtain that
\begin{equation}\label{eq18}
\dfrac{R}{\langle u \rangle}\simeq \dfrac{\mathbb{C}\lbrace x_1,\cdots,x_n,u \rbrace}{\langle \ u, w_2(x_1,\cdots,x_n,u) \ , \ \cdots, \ w_{n+1}(x_1,\cdots,x_n,u)  \ \rangle}.		
	\end{equation}
 
\noindent has dimension zero, hence it is a finite-dimensional vector space over $\mathbb{C}$. This implies that the intersection in (\ref{eq16}) (and also in (\ref{eq15})) is transversal. By Remark \ref{remarkintersection}, we conclude that the dimension as a $\mathbb{C}$-vector space of the quotient ring in (\ref{eq15}) is precisely $d_2\cdot d_3 \cdots d_{n+1}$, which proves the statement.\\

To conclude the proof, in the Mond-Pellikaan algorithm (see \cite[Sec. 2.2]{[12]}), consider the projection $\pi_1$ above. By the statement, we have that $\pi_1 \circ f$ is finite. Thus the presentation matrix $\lambda$ of $f_{\ast}(\mathcal{O}_n)$ with respect to $\pi_1$ has the following form:	

$$\lambda[\textit{X}_1,\cdots,\textit{X}_{n+1}]=\left[ \begin{array}{rrrrr}
			\alpha_{1,1}-X_1 & \alpha_{1,2} & \alpha_{1,3} & \cdots & \alpha_{1,d'} \\
			\alpha_{2,1} & \alpha_{2,2}-X_1 & \alpha_{2,3} & \cdots & \alpha_{2,d'} \\
			\alpha_{3,1} & \alpha_{3,2} & \alpha_{3,3}-X_1 & \cdots & \alpha_{3,d'} \\
			\vdots & \vdots & \vdots & \cdots & \vdots \\
			\alpha_{d',1} & \alpha_{d',2} & \alpha_{d',3} & \cdots & \alpha_{d',d'}-X_1
		\end{array}\right].$$
		
\noindent where $\alpha_{i,j} \in \mathbb{C}\lbrace X_2,\cdots, X_{n+1}\rbrace$ and $d'=d_2\cdot d_3 \cdots d_{n+1}$.

As a consequence, we obtain that the term $X_1^{d'}$ appears in the defining equation of $f(\mathbb{C}^n)$ given by the determinant of $\lambda$. Therefore, $m(f(\mathbb{C}^n),0)\leq d_2 \cdots d_{n+1}$. The last inequality in (\ref{eq19}) follows from the fact that $d_1\cdot d_2 \cdots d_{n+1}=|G|$, since $G$ is a reflection group (\cite{[6]}, see also \cite[Th. 4.19]{[2]}).\end{proof}
	
	\begin{remark}\label{cormult} Let $f:(\mathbb{C}^n,0)\longrightarrow (\mathbb{C}^{n+1},0)$ be a reflected graph map germ with respect to a reflection group $G$ acting on $\mathbb{C}^{n+1}=\mathbb{C }^{n} \times \mathbb{C}$, as in Section \ref{sectionimage}. Since $G$ acts trivially on the second factor, the orbit map $w:(\mathbb{C}^{n+1},0)\rightarrow (\mathbb{C}^{n+1},0)$ is $w(\textbf{y})=(w_1(\textbf{y}),\cdots,w_n(\textbf{y}),y_{n+1})$. In other words, if $d_1,\cdots,d_{n+1}$ are the degrees of $G$, then $d_{n+1}=1$. After a reordering of the remaining degrees of $G$, we can assume that $d_1\leq d_2 \leq \cdots \leq d_n$. By Theorem \ref{teo5.2} we obtain that
	
		\begin{equation*}
			d_1 d_2 \cdot ... \cdot d_{n-1} \leq m(f(\mathbb{C}^n),0) \leq d_1 \cdot ... \cdot d_n =d=|G|.
		\end{equation*}

When working with a reflected graph map, to avoid unnecessary notation, we can think of $G$ as acting nontrivially only on $\mathbb{C}^n$ (the first factor of $\mathbb{C}^n \times \mathbb{C}$, and not on all $\mathbb{C}^{n+1}$). In this case, we can consider only $d_1,\cdots, d_n$ as ``non-trivial degrees'' of $G$ and ignore the degree $d_{n+1}=1$. Whenever there is no danger of confusing the notation, we will use this simplification.				
		
	\end{remark}
	
	\section{Some applications and examples}
	
	$ \ \ \ \ $ In this section, we discuss present some applications of our results. In the first part, we provide an alternative proof of a result of (see \cite[Th. 5.2]{[rafael]}) about a defining equation of the double point space of a reflected graph map germ. In the second part, we will introduce the notion of a dihedral map germ and we will apply our results to describe the presentation matrix and a defining equation of the image of maps of this kind. Finally, in the last part we extend a result given by Marar and Nuño-Ballesteros (see \cite[Th. 3.4]{[3]}) about the non-existence of quasihomogeneous (with distinct weights) finitely determined reflection map germs, for some reflection groups.
	
	\subsection{Double point spaces}
	
	$ \ \ \ \ $ When we study a finite map $f$ from $(\mathbb{C}^n,0)$ to $(\mathbb{C}^{p},0)$ with $n\leq p$ the multiple point spaces of $f$ play an important role in the study of its geometry. 
	
	In the particular case where $p=n+1$ and $f$ is a reflection map, a natural question is if we can described a defining equation of the hypersurface of double points $D(f)$ in the source in terms of the action of $G$ in some sense. 
	
	Recently, Borges Zampiva, Peñafort-Sanchis, Oréfice Okamoto and Tomazella provided an answer to this question in a more general context. They give a defining equation of $D(f)$ which depends only on the action of $G$ in an appropriate way (see \cite[Th. 5.2]{[rafael]}). As our first application, we will use Lemma \ref{prim} to offer an alternative proof of the formula given in the case of reflected graph map germs. For the convenience of the reader we will recall the notion of the double point space of a map germ. We follow \cite{[8]} to describe the double point set of a map germ $f:(\mathbb{C}^n,0)\longrightarrow (\mathbb{C}^p,0)$, with $n<p$.\\
	
	Let $f:U \longrightarrow \mathbb{C}^p$ be a homomorphic map, where $U \subset \mathbb{C}^n$ is an open subset and $n<p$. Now let us define the set of double point space of $f$, denoted by $D^2(f)$. 
	Denote the diagonals of $\mathbb{C}^n \times \mathbb{C}^n$ and $\mathbb{C}^p \times \mathbb{C}^p$ by $\Delta_n$ and $\Delta_p$ and denote the ideal schemes of defining $\Delta_n$ and $\Delta_p$ by ${\cal I}_n$, ${\cal I}_p$. Write points of $\mathbb{C}^n \times \mathbb{C}^n$ as $(x,x')$. It is clear that for each $i=1,...,p$,
	\begin{equation*}
		f_i(x)-f_i(x') \in {\cal I}_n,
	\end{equation*}
	so there exist $\alpha_{i,j}$, $1 \leq i \leq n$, $1 \leq j \leq n+1$, such that 
	\begin{equation*}
		f_i(x)-f_i(x')=\sum \alpha_{i,j}(x-x')(x_j-x_j').
	\end{equation*}
	
	If $f(x)=f(x')$ and $x \neq x'$, then clearly every $n\times n$ minor of the matrix $\alpha=[\alpha_{i,j}]$ must vanish at $(x,x')$. Now denoting by ${\cal R}_n(\alpha)$ the ideal in ${\cal O}_{\mathbb{C}^{2n}}$ generated by the $n\times n$ minors of $\alpha$.
	
	\begin{definition}Let $f:U \longrightarrow \mathbb{C}^p$ be as above, then the double point space of $f$ is the complex space
	\end{definition}
	\begin{equation*}
		{ D}^2(f)=\textbf{V}((f\times f)^*{\cal I}_p+{\cal R}_n(\alpha)).
	\end{equation*}
	
%	Note that at a point $(x,x')$ that is not on the diagonal $\Delta_n$, ${\cal I}^2(f)$ is generated by the $f_i(x)-f_i(x')$ functions. Furthermore, the restriction of ${\cal I}^2(f)$ to diagonal $\Delta_n$ is the ideal generated by \textcolor{blue}{the $n\times n$ minors} of the Jacobian matrix of $f$, so $\Delta_n \cap {D}^2(f)$ is the singular set of $f$. 
	
	\begin{definition}\label{D2f} Let $f:(\mathbb{C}^n,0) \longrightarrow (\mathbb{C}^p,0)$ be a finite map germ, where $n \leq p$.\\ 
		
		\noindent (a) Let us denote by $I_p$ and $R_n(\alpha)$ the stalks at $0$ of the sheaves ${\cal I}_p$ and ${\cal R}_n(\alpha)$. Taking a representative of $f$ we define the double point space of the map germ $f$ as the analytic space
		\begin{equation*}
			{D}^2(f):=\textbf{V}((f\times f)^*I_p+R_n(\alpha)).
		\end{equation*}

		\noindent (b) The mapping $\pi:(D^2(f),0) \longrightarrow (\mathbb{C}^n,0)$, given by $(x,x') \longmapsto x$, is finite. The source double point set ${D}(f)$ is defined as the image of $\pi$ with the analytic structure given by the $0$-Fitting ideal of $\pi_* {\cal O}_{{D}^2(f)}$ , that is, 
		\begin{equation*}
			{D}(f):=\textbf{V}({\cal F}_0(\pi_* {\cal O}_{{*D}^2(f)})).
		\end{equation*}
	\end{definition}
	
	Now we will present an alternative proof for \cite[Th. 5.2]{[rafael]} in the case of a reflected graph map germ. We recall that $\cal R$ denotes the set of all reflections of $G$ and $\Delta_{g_k}$ denotes the Demazure operator (see Section \ref{prel}).
	
	\begin{proposition}\rm(\cite{[rafael]})\label{princi} \textit{Let $f:(\mathbb{C}^n,0) \longrightarrow (\mathbb{C}^{n+1},0)$, $f(\textbf{x})=(w(\textbf{x}),h(\textbf{x}))$ be a reflected graph map germ, then}
		\begin{equation}\label{teoo}
			{D}(f)=\textbf{V}\left( \left( \prod_{g_k \in {\cal R}}  \Delta_{g_k}(h)\right) \left( \prod_{g_k \notin {\cal R}, \ g_k \neq Id}\left( h-{g_k}_{\bullet}h \right) \right) \right).
		\end{equation}
	\end{proposition}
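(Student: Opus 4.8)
The plan is to reduce the computation of the source double point space $\mathcal{D}(f)$ to the factorization of the defining equation of $\widetilde{\mathcal{Y}} = w^{-1}(w(\mathcal{Y}))$ obtained in Lemma \ref{prim}, and then to carefully track which factors come from the reflections of $G$ and which come from the remaining non-trivial elements. First I would recall from Definition \ref{D2f} that $\mathcal{D}(f) = \textbf{V}(\mathcal{F}_0(\pi_*\mathcal{O}_{\mathcal{D}^2(f)}))$, where $\pi:\mathcal{D}^2(f) \to (\mathbb{C}^n,0)$ is the first projection. Since $f = w \circ \rho$ with $\rho$ the graph embedding $\textbf{x} \mapsto (\textbf{x}, h(\textbf{x}))$, two points $\textbf{x}, \textbf{x}'$ with $\textbf{x} \neq \textbf{x}'$ have $f(\textbf{x}) = f(\textbf{x}')$ precisely when $w(\textbf{x}) = w(\textbf{x}')$ and $h(\textbf{x}) = h(\textbf{x}')$; by Noether's Theorem (Theorem \ref{teo}), $w(\textbf{x}) = w(\textbf{x}')$ means $\textbf{x}' = {g_k}_{\bullet}\textbf{x}$ for some $g_k \in G$. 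Thus, set-theoretically, $\mathcal{D}(f)$ is the union over $Id \neq g_k \in G$ of the loci $\{\textbf{x} : h(\textbf{x}) = h({g_k}_{\bullet}\textbf{x})\} = \textbf{V}(h - {g_k}_{\bullet}h)$.

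The main work is to identify the correct \emph{scheme} structure, i.e., the multiplicities with which each such locus appears in the $0$-th Fitting ideal. Here I would split $G \setminus \{Id\}$ into the reflections $g_k \in \mathcal{R}$ and the non-reflections $Id \neq g_k \notin \mathcal{R}$. For a reflection $g_k$ with reflecting hyperplane $H = \ker L_H$, Lemma \ref{li} gives $h - {g_k}_{\bullet}h = L_H \cdot \Delta_{g_k}(h)$; the factor $L_H$ is precisely the contribution of the fixed hyperplane, which records points \emph{on} $H$ where $\textbf{x}$ and ${g_k}_{\bullet}\textbf{x}$ coincide — these lie on the diagonal $\Delta_n$ and correspond to the singular/ramification locus of $w$ rather than to genuine double points. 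So only the Demazure factor $\Delta_{g_k}(h)$ should survive in $\mathcal{D}(f)$ for reflections, whereas for non-reflections $g_k$ the whole factor $h - {g_k}_{\bullet}h$ survives. To make this rigorous I would compute a presentation of $\pi_*\mathcal{O}_{\mathcal{D}^2(f)}$ directly, or alternatively invoke the relation between $\mathcal{D}(f)$ and the image: by \cite{[12]}, the ideal $\mathcal{F}_0(\pi_*\mathcal{O}_{\mathcal{D}^2(f)})$ pulls back, up to the Fitting structure on the image, to the ramification data, and one knows $F(\textbf{w},Z) = \prod_{k=1}^d (Z - {g_k}_{\bullet}h)$ from Lemma \ref{prim}. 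Differentiating or passing to the appropriate minors, the double point contribution is governed by $\prod_{k \neq 1}(h - {g_k}_{\bullet}h)$ after dividing out the contribution of $\det(jac(w))$, which by Proposition \ref{li3} is $c\prod_{i=1}^\ell L_{H_i}^{e_i - 1}$; this division removes exactly one factor of $L_{H_i}$ per reflection $g_k$ fixing $H_i$ (summing over the $e_i - 1$ non-trivial powers of the generator of the cyclic group at $H_i$ gives $\prod_i L_{H_i}^{e_i-1}$, matching $\prod_{g_k \in \mathcal{R}} L_{H_k}$ counted with the correct multiplicity), leaving $\left(\prod_{g_k \in \mathcal{R}} \Delta_{g_k}(h)\right)\left(\prod_{Id \neq g_k \notin \mathcal{R}}(h - {g_k}_{\bullet}h)\right)$.

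I expect the main obstacle to be the bookkeeping of multiplicities at the reflecting hyperplanes: one must show that each linear form $L_{H_i}$ appears in $\det(jac(w))$ to exactly the power $e_i - 1$ and that this exactly cancels the corresponding factors in $\prod_{k=2}^d (h - {g_k}_{\bullet}h)$ coming from the $e_i - 1$ non-identity elements of the cyclic group fixing $H_i$ — being careful that when $h$ happens to vanish on $H_i$ the factorization $h - {g_k}_{\bullet}h = L_{H_i}\Delta_{g_k}(h)$ may hide further powers of $L_{H_i}$ inside $\Delta_{g_k}(h)$, but this is consistent since $\mathcal{D}(f)$ is defined via a Fitting ideal and not just set-theoretically. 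A clean way around the delicate part is to argue via genericity: it suffices to verify the identity of ideals generically (where $h$ is in general position with respect to all the $H_i$), and then invoke flatness/continuity of the Fitting ideal construction, or simply to follow the Mond–Pellikaan algorithm as in the proof of Lemma \ref{prim} with the diagonal term $Z$ replaced by the appropriate double-point variable, reading off $\mathcal{F}_0$ of the resulting presentation matrix and matching it term-by-term with \eqref{teoo}. Finally I would note that this recovers \cite[Th.~5.2]{[rafael]} in the reflected graph case, as claimed.
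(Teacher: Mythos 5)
Your proposal is correct and follows essentially the same route as the paper: factor $F(\textbf{w},Z)=\prod_k\bigl(Z-{g_k}_{\bullet}h\bigr)$ via Lemma \ref{prim}, take $\partial F/\partial Z$ at $Z=h$ to get $\prod_{k\neq 1}\bigl(h-{g_k}_{\bullet}h\bigr)$, divide by $\det(Jac(\textbf{w}))=c\prod_i L_{H_i}^{e_i-1}$, and absorb the cancellation into the Demazure operators. The only thing to pin down is the key external input you leave vague ("the double point contribution is governed by\dots"): the paper cites Piene and Bruce--Marar for the precise statement that ${\cal D}(f)$, with its Fitting structure, is $\textbf{V}$ of the pullback of $\partial F/\partial Z$ divided by $\det(Jac(\textbf{w}))$, which makes your genericity/flatness detour in the last paragraph unnecessary.
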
	
	\begin{proof} Denote by $g_1=Id,g_2,\cdots, g_d$ the elements of $G$. By Theorem \ref{teeeo} we obtain that a defining equation of the determinant of the image of $f$ is given by		
		\begin{equation}\label{equa}
			F(\textit{\textbf{X}},Z)=Z^d-Q_{d-1}Z^{d-1}+Q_{d-2}Z^{d-2}+\cdots+(-1)^{d-1}Q_1Z+(-1)^dQ_0,
		\end{equation}	
		Note that replacing $\textit{\textbf{X}}$ by $\textit{\textbf{w}}$ in (\ref{equa}) and taking the partial derivative with respect to the variable $Z$ is equivalent to taking the partial derivative of (\ref{equa}) with respect to variable $Z$ and then replacing $\textit{\textbf{X}}$ with $\textit{\textbf{w}}$, i.e., the order of execution of these operations does not matter. By Lemma \ref{prim} we have that
		\begin{equation*}
			F(\textit{\textbf{w}},Z)=\prod_{k=1}^{d}\left(Z-{g_k}_{\bullet}(h(\textit{\textbf{x}}))\right).
		\end{equation*}
		\noindent Therefore,
		\begin{equation*}
			\dfrac{\partial F(\textit{\textbf{w}},Z)}{\partial Z}= \dfrac{ \partial \left(Z-{g_1}_{\bullet}{h(\textit{\textbf{x}})}\right)}{\partial Z} \left[\prod_{k=2}^{d}\left(Z-{g_k}_{\bullet}(h(\textit{\textbf{x}}))\right)\right] + \left( Z-{g_1}_{\bullet}{h(\textit{\textbf{x}})}\right) \dfrac{\partial \left[\prod_{k=2}^{d}\left(Z-{g_k}_{\bullet}(h(\textit{\textbf{x}}))\right)\right]}{\partial Z} ,
		\end{equation*}
		
		Note that $g_{1_{\bullet}}h(\textit{\textbf{x}})=h(\textit{\textbf{x}})$, replacing $Z$ by $h$ we obtain that
		
		\begin{equation}\label{h}
			\dfrac{\partial F(\textit{\textbf{w}},h)}{\partial Z}=(1-h(\textit{\textbf{x}}))\left[\prod_{k=2}^{d}\left(h(\textit{\textbf{x}})-{g_k}_{\bullet}(h(\textit{\textbf{x}}))\right)\right].
		\end{equation}
		
		By \cite{piene} (see also \cite[Prop. 2.5]{[1]}) we obtain that
		\begin{equation*}\label{for}
			(D(f),0)= \left( \textbf{V}\left(\dfrac{\prod_{k=2}^{d}\left( h(\textit{\textbf{x}})-{g_{k}}_{\bullet}h(\textit{\textbf{x}})\right) }{det(Jac(\textit{\textbf{w}}))}\right),0 \right).
		\end{equation*}
		
	Consider the hyperplanes $Fix g$, where $g$ runs through the reflections of $G$ and call these the reflecting hyperplanes of $G$. For each $i$, let $ord(g_i)$ be the order of the cyclic group $G_{Fix g_i}$ fixing $Fix g_i$ pointwise and let $L_{g_1}, L_{g_2}, . . . , L_{g_l}$ be linear forms such that $Fix g_i = Ker L_{g_i}$. We have that
		\begin{equation*}\label{for}
			\dfrac{\prod_{{g_k} \in {\cal R}}\left( h(\textit{\textbf{x}})-{g_{k}}_{\bullet}h(\textit{\textbf{x}})\right) }{det(Jac(\textit{\textbf{w}}))} \stackrel{Prop.\, \ref{li3}}{=} \dfrac{\prod_{{g_k} \in {\cal R}}\left( h(\textit{\textbf{x}})-{g_{k}}_{\bullet}h(\textit{\textbf{x}})\right)}{c\prod_{i=1}^{\ell} L_{g_i}^{{ord(g_i)}-1}}= c\prod_{{g_k} \in {\cal R}} \Delta_{g_k}(h)
		\end{equation*}
		
		\noindent where ${\cal R}$ denotes the set of all reflections of $G$ (see for instance \cite[Lemma 9.7]{[2]}). Therefore, 
		\begin{equation*}
			{D}(f)=\textbf{V}\left( \left( \prod_{g_k \in {\cal R}}  \Delta_{g_k}(h)\right) \left( \prod_{g_k \notin {\cal R}, \ g_k \neq Id}\left( h-{g_k}_{\bullet}h \right) \right) \right),
		\end{equation*} as which concludes the proof. \end{proof}

	\begin{example}\label{ex6.2} Consider $G=\mathbb{Z}_2\times \mathbb{Z}_2$ and $f(x,y)=(x^2,y^2,h(x,y))$, where 
		\begin{equation*}
			h(x,y)=xp_1(x^2,y^2)+yp_2(x^2,y^2)+xyp_3(x^2,y^2).
		\end{equation*}
		
		\noindent Note that the elements of $\mathbb{Z}_2\times \mathbb{Z}_2$ can be represented in $GL_2(\mathbb{C})$ by the following matrices 
		$$Id=\begin{bmatrix}
			1 & 0 \\
			0 & 1 \
		\end{bmatrix} \hspace{0.5cm}  \textrm{,} \hspace{0.5cm}  g_1=\begin{bmatrix}
			-1 & 0 \\
			0 & -1 \
		\end{bmatrix} \hspace{0.5cm}  \textrm{,} \hspace{0.5cm}  g_2=\begin{bmatrix}
			1 & 0 \\
			0 & -1 \
		\end{bmatrix}\hspace{0.5cm}  \textrm{e} \hspace{0.5cm}  g_3=\begin{bmatrix}
			-1 & 0 \\
			0 & 1 \
		\end{bmatrix}.$$
		
		\noindent Therefore, we obtain that
		$$\begin{array}{rcl} 
			h(x,y)-{g_1}_{\bullet}h(x,y) & = & 2(xp_1+yp_2). \\
			h(x,y)-{g_2}_{\bullet}h(x,y) & = & 2y(p_2+xp_3). \\  
			(x,y)-{g_3}_{\bullet}h(x,y) & = &2x(p_1+yp_3).
		\end{array}$$
		
		\noindent Note that $det(Jac(x^2,y^2))=4xy$. Thus, from Proposition \ref{princi}, we obtain that 
		\begin{equation}\label{eq1}
			{D}(f)=\textbf{V}((yp_3+p_1)(xp_3+p_2)(xp_1+yp_2)).
		\end{equation}
		
		\noindent We remark that the defining equation that appears in (\ref{eq1}) for ${D}(f)$ is exactly the same presented in \cite[Prop. 3.1]{[3]}.	
	\end{example}
	
	\begin{corollary}\label{non} With the notation used in Proposition \ref{princi} let $f=(\textbf{w},h)$ be a reflected graph map germ from $(\mathbb{C}^n,0)$ to $(\mathbb{C}^{n+1},0)$. Suppose that $h$ is regular, say 
		\begin{equation*}
			h(\textbf{x})=a_1x_1+a_2x_2+\cdots+a_nx_n+R(\textbf{x}),
		\end{equation*}	
		with $m(R(\textbf{x}),0)>1$ or $R=0$. Let $H$ be the hyperplane defined by $H:=\textbf{V}(\sum_{i=1}^{n}a_ix_i)$. Suppose that for every reflection $g \in G$, $H$ is not the reflecting hyperplane of $g$ (equivalently $H \cap (V(Jac(w))) \neq H$ as a set). Then,
		\begin{equation*}
			{D}(f)=\textbf{V}\left( \prod_{g_k \notin {\cal R}, \ g_k \neq Id}\left( h(\textbf{x})-{g_k}_{\bullet}h(\textbf{x}) \right) \right).
		\end{equation*}
	\end{corollary}
	
		\begin{proof} By Proposition \ref{princi} we have that
		\begin{equation}\label{eq20}
			{D}(f)=\textbf{V}\left( \left( \prod_{g_k \in {\cal R}}  \Delta_{g_k}(h(\textit{\textbf{x}}))\right) \left( \prod_{g_k \notin {\cal R}, \ g_k \neq Id}\left( h(\textit{\textbf{x}}))-{g_k}_{\bullet}h(\textit{\textbf{x}})) \right) \right) \right).
		\end{equation}
		
We just need to show that $\Delta_{g}(h(\textit{\textbf{x}}))$ is an invertible element in the ring of analytic series $\mathbb{C}\{\textbf{\textit{x}}\}$ for all $g \in \mathcal{R}$. If $g \in \mathcal{R}$ and $Fix\,g$ is its reflecting hyperplane, with $Fix\,g=Ker\,L_g$, then by the definition of the Demazure operator and Lemma \ref{li} we obtain that
\begin{equation}\label{eq21}
\Delta_{g}(h(\textit{\textbf{x}}))=\dfrac{h(\textit{\textbf{x}})-g_{\bullet}h(\textit{\textbf{x}})}{L_g}=Q(\textit{\textbf{x}}),
\end{equation}	
\noindent for some $Q(\textit{\textbf{x}}) \in \mathbb{C}\lbrace \textbf{\textit{\textbf{x}}}\rbrace$. 

The action of $G$ on $\mathbb{C}\lbrace \textbf{\textit{\textbf{x}}}\rbrace$ has the property that if $P \in \mathbb{C}\lbrace \textbf{\textit{\textbf{x}}} \rbrace$ is homogeneous of degree $d$, then $g_{\bullet}P$ is also homogeneous of degree $d$ (see \cite[Ch. 3, Sec. 2]{[2]}). Also, recall that $g_{\bullet}(P_1+P_2)=g_{\bullet}(P_1)+g_{\bullet}(P_2)$ for all $P_1,P_2 \in \mathbb{C}\lbrace \textbf{\textit{\textbf{x}}} \rbrace$. By hypothesis, $H$ is not a hyperplane of $g$. Therefore, 
\begin{equation*}
g_{\bullet}(a_1x_1+a_2x_2+\cdots+a_nx_n) \neq a_1x_1+a_2x_2+\cdots+a_nx_n,
\end{equation*}
\noindent i.e. the defining equation of $H$ is not invariant under the action of $g$. In this way, we can write

\begin{center}
$h(\textit{\textbf{x}})-g_{\bullet}h(\textit{\textbf{x}})=b_1x_1+b_2x_2+\cdots+b_nx_n+Q'(\textit{\textbf{x}})$,
\end{center}

\noindent where either $m(Q'(\textbf{\textit{x}}),0)>1$ or $Q'=0$. Also, $b_i \in \mathbb{C}$ for all $i$, with at least one $b_i$ being non-zero.

By (\ref{eq21}) we have that $L_g$ (which is a linear form) divides $h(\textit{\textbf{x}})-g_{\bullet}h(\textit{\textbf{x}})$. This implies that $Q(0)\neq 0$, and so $Q(\textit{\textbf{x}})$ must necessarily be an invertible element in $\mathbb{C}\{\textbf{\textit{x}}\}$.\end{proof}

	\begin{example} Let $f:(\mathbb{C}^2,0) \longrightarrow (\mathbb{C}^3,0)$ be a reflected graph map germ, given by 
		
		\begin{center}
			$f(x,y)=(x^2,y^2,x-3y+y^3)$. 
		\end{center}
		
		\noindent Using the notation in Example \ref{ex6.2}, by Corollary \ref{non} we obtain that 
		\begin{equation*}
			{D}(f)=\textbf{V}(h(x,y)-g_{1_{\bullet}}h(x,y)))= \textbf{V} \left(x-3y+y^3 \right).
		\end{equation*}
		
		Note that in this case we do not need to consider all elements of $G$ to obtain a defining equation of $D(f)$, only those that are not reflections (excluding of course the identity of the group).	
	\end{example}
	
	\subsection{Dihedral map germs}\label{dihedral}
	
	$ \ \ \ \ $ Inspired by the work of Marar and Nuño-Ballesteros in \cite{[3]} where they introduce the double fold map germs, in this section we will introduce the ``\textit{dihedral map germs}''. Consider the generators of the dihedral group ${D}_{2m}$ in $GL(\mathbb{C}^2)$ given by in Example \ref{example1},
	
	$$R=\left[ \begin{array}{rrrrr}
		0 & 1 \\
		1 & 0
	\end{array}\right] \hspace{0.5cm} \textrm{and} \hspace{0.5cm} S=\left[ \begin{array}{rrrrr}
		\zeta & 0 \\
		0 & \zeta^{m-1} 
	\end{array}\right].$$
	\noindent where $\zeta=e^{\frac{2\pi i}{m}}$.
	
	Note that with this representation $D_{2m}$ is a reflection group. By \cite[Prop. 2.22]{[2]} we have that the polynomials $w_1=x^m+y^m$ and $w_2=xy$ are algebraically independent and they are invariant by the action of $D_{2m}$ (in \cite{[2]} $D_{2m}$ is identified as being the group $G(m,m,2)$ in the Shephard and Todd classification).  Therefore the orbit map for the group $D_{2m}$ acting on $\mathbb{C}^2$ is $w(x,y)=(x^m+y^m, \  xy)$.\\
	%The group $D_{2m}$ has $m$ reflections, whose reflecting hyperplanes are  $L_{H_1}=x-y$, $L_{H_2}=x-\zeta y$, ..., $L_{H_{m-1}}=x-\zeta^{m-2} y$ and $L_{H_{m}}=x-\zeta^{m-1} y$.
	
	Thus, a the reflection map $f:(\mathbb{C}^2,0) \longrightarrow (\mathbb{C}^3,0)$ is a \textit{$2m$-dihedral map germ}, or simply ``\textit{a dihedral map germ}'', if $f$ is a reflected graph map germ in the following form
	\begin{equation*}
		f(x,y)=(x^m+y^m, \ xy, \ h(x,y)).
	\end{equation*} 
	
	\noindent Note that $r_1=1, r_2=x, r_3=x^2, \cdots, r_{m}=x^{m-1}$ and $r_{m+k}=y^k$ with $k \in \{1,...,m\}$ is a basis of $\mathcal{O}_2/(x^m+y^m,xy)$ as a $\mathbb{C}$-vector space. By taking coordinate changes and using the Malgrange preparation theorem we can write $h$ in the form
	
	\begin{equation*}
		h(x,y)=\sum_{j=1}^{m-1}x^jp_j+\sum_{k=1}^{m}y^kp_{m-1+k}
	\end{equation*}
	
	\noindent where $p_i=p_i(x^m+y^m,xy)$ and we suppose that $h(0,0)=0$. As an example, we will provide in the next result a study of a $6$-dihedral map germ, i.e., using the $D_6$ group.  We consider the following representation in $GL(\mathbb{C}^2)$ of the dihedral group $D_6$:
	$$g_1=Id=\begin{bmatrix}
		1 & 0 \\
		0 & 1 \
	\end{bmatrix},g_2=\begin{bmatrix}
		0 & 1 \\
		1 & 0 \
	\end{bmatrix}, g_3=\begin{bmatrix}
		\zeta & 0 \\
		0 & \zeta^2 \
	\end{bmatrix}, g_4=\begin{bmatrix}
		\zeta^2 & 0 \\
		0 & \zeta \
	\end{bmatrix}, g_5=\begin{bmatrix}
		0 & \zeta \\
		\zeta^2 & 0 \
	\end{bmatrix},g_6=\begin{bmatrix}
		0 & \zeta^2 \\
		\zeta & 0 \
	\end{bmatrix}.$$
	\noindent	where $\zeta=e^{\frac{2\pi i}{3}}$, so $\zeta^2+\zeta+1=0$.
	
	The orbit map for the group $D_6$ acting on $\mathbb{C}^2$ is $w(x,y)=(x^3+y^3,xy)$. Thus, a reflection map $f:(\mathbb{C}^2,0) \longrightarrow (\mathbb{C}^3,0)$ for the dihedral group $D_6$ is given by $f(x,y)=(x^3+y^3,xy,h(x,y))$, where $h(x,y)=xp_1+x^2p_2+yp_3+y^2p_4+y^3p_5$. Note that
	
	$$\begin{array}{rcl}
		h_1:= {g_1}_{\bullet}h & = & h.\\ 
		h_2:={g_2}_{\bullet}h & = & yp_1+y^2p_2+xp_3+x^2p_4+x^3p_5. \\  
		h_3:={g_3}_{\bullet}h & = & \zeta xp_1+\zeta^2x^2p_2+\zeta^2yp_3+\zeta y^2p_4+y^3p_5. \\
		h_4:={g_4}_{\bullet}h & = & \zeta^2xp_1+\zeta x^2p_2+\zeta yp_3+\zeta^2 y^2p_4+y^3p_5. \\
		h_5:={g_5}_{\bullet}h & = & \zeta yp_1+\zeta^2y^2p_2+\zeta^2xp_3+\zeta x^2p_4+x^3p_5.\\
		h_6:={g_6}_{\bullet}h & = & \zeta^2yp_1+\zeta y^2p_2+\zeta xp_3+\zeta^2 x^2p_4+x^3p_5. 
	\end{array}$$ 
	
	Recall from Lemma \ref{LEMMA} that we can define
	
	$$\begin{array}{lll}
		q_5(xy,x^3+y^3):= h_1+ \ h_2+ \ h_3+ \ h_4+ \ h_5+ \ h_6.\\ 
		q_4(xy,x^3+y^3):= h_1h_2+ \ h_1h_3+ \ h_1h_4+ \ \cdots + \  h_5h_6.\\
		q_3(xy,x^3+y^3):= h_1h_2h_3+ \ h_1h_2h_4+ \ \cdots + \ h_4h_5h_6.\\
		q_2(xy,x^3+y^3):= h_1h_2h_3h_4+ \ h_1h_2h_3h_5+\cdots + h_3h_4h_5h_6.\\
		q_1(xy,x^3+y^3):= h_1h_2h_3h_4h_5+ \ h_1h_2h_3h_4h_6+ \ \cdots + \ h_2h_3h_4h_5h_6.\\
		q_0(xy,x^3+y^3):= h_1h_2h_3h_4h_5h_6.
	\end{array}$$ 
	
	\begin{proposition}\label{dihedralprop} Let $f:(\mathbb{C}^2,0) \longrightarrow (\mathbb{C}^3,0)$ be a $6$-dihedral map germ and write it in the form
		
		\begin{equation*}
			f(x,y)=(x^3+y^3, \ xy, \  xp_1+x^2p_2+yp_3+y^2p_4+y^3p_5).
		\end{equation*}
		
		\noindent  where $p_i=p_i(x^3+y^3, \ xy)$. Then
		\begin{itemize}
			\item[$(a)$] The presentation matrix of $f_*{\cal O}_2$ as an ${\cal O}_{3} $-module via $f$ is
			
			$$\lambda[X,Y,Z]=\begin{bmatrix}
				-Z & p_1 & p_2 & p_3 & p_4 & p_5 \\
				p_2X+p_3Y & -Z & p_1 & p_4Y & p_5Y & -p_2 \\
				p_1X+p_4Y^2 & p_2X+p_3Y & -Z & p_5Y^2 & -p_2Y & -p_1 \\
				p_1Y & p_2Y & -p_5Y & p_5X-Z & p_3 & p_4 \\
				p_2Y^2 & -p_5Y^2 & -p_4Y & p_4X+p_1Y & p_5X-Z & p_3 \\
				-p_5Y^3 & -p_4Y^2 & -p_3Y & p_3X+p_2Y^2 & p_4X+p_1Y & p_5X-Z \
			\end{bmatrix},$$
			
			\noindent where $p_i=p_i(X,Y)$.
			
			\item[$(b)$] A defining equation of the image of $f$ is
			
			\begin{equation*}
				F(X,Y,Z)= Z^6-Q_{5}Z^{5}+Q_{4}Z^{4}-Q_3Z^3+Q_2Z^2-Q_1Z+Q_0.
			\end{equation*}
			
			where $Q_{6-k}(X,Y):=w^{\ast}(q_{6-k}(x^3+y^3,xy))$, i.e, we simply change the ``variables'' $x^3+y^3$ and $xy$ by the (target) variable $X$ and $Y$ in $q_{6-k}$ described just above this proposition.

			\item[$(c)$] A defining equation of the double point curve $D(f)$ of $f$ is
			
			\begin{equation*}
				D(f)=\textbf{V}\left(\eta_1\eta_2\eta_3\right)
			\end{equation*}
			where 
			$$\begin{array}{rcl} 
				\eta_1 & = & -p_1+p_3-p_2x+p_4x-p_2y+p_4y+p_5x^2+p_5xy+p_5y^2. \\  
				\eta_2 & = & p_1^2+p_1p_3+p_3^2+2p_1p_2x+p_2p_3x+p_1p_4x-p_3p_4x-p_1p_2y+p_2p_3y+p_1p_4y \\
				& & +2p_3p_4y+p_2^2x^2+p_2p_4x^2+p_4^2x^2-2p_1p_5x^2-
				p_3p_5x^2-p_2^2xy-p_2p_4xy-p_4^2xy \\
				& & +p_1p_5xy-p_3p_5xy+p_2^2y^2+p_2p_4y^2+p_4^2y^2+p_1p_5y^2+2p_3p_5y^2-2p_2p_5x^3-
				p_4p_5x^3 \\
				& & +2p_2p_5x^2y+p_4p_5x^2y-p_2p_5xy^2-2p_4p_5xy^2+p_2p_5y^3+2p_4p_5y^3+{p_5}^2x^4 \\
				& & -{p_5}^2x^3y-{p_5}^2xy^3+p_5^2y^4. \\
				\eta_3 & = & p_1^2x^2+p_1p_3xy+p_3^2y^2+p_1p_2x^3+2p_2p_3x^2y+2p_1p_4xy^2+p_3p_4y^3+p_2^2x^4+p_2p_4x^2y^2 \\ & & +p_4^2y^4.
			\end{array}$$
			
			\item[$(d)$] The multiplicity of the image of $f$ satisfies
			
			\begin{center}
				$2 \leq m(f(\mathbb{C}^2,0)) \leq 6$.
			\end{center}
		\end{itemize}
	\end{proposition}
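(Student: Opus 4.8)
\noindent The plan is to settle the four items in turn, using only results established above. For $(a)$, I would apply Theorem~\ref{c3}, which gives $\lambda[X,Y,Z]=w^{\ast}(E\cdot A_Z\cdot E^{-1})$, where $E$ is the $6\times 6$ matrix with $(i,k)$-entry ${g_k}_{\bullet}r_i$ for the basis $r_1=1,\ r_2=x,\ r_3=x^2,\ r_4=y,\ r_5=y^2,\ r_6=y^3$ of $\mathcal{O}_2/(x^3+y^3,xy)$, and $A_Z=\mathrm{diag}(h_1-Z,\dots,h_6-Z)$ with the $h_i={g_i}_{\bullet}h$ already displayed before the statement. To avoid inverting $E$ by hand I would instead use Corollary~\ref{matrix2}, replacing $E^{-1}$ by $\mathrm{Adj}(E)$ divided by $\prod_i L_{H_i}^{|G|(e_i-1)/2}$; the three reflecting hyperplanes of $D_6$ are $\textbf{V}(x-y)$, $\textbf{V}(x-\zeta y)$ and $\textbf{V}(x-\zeta^2 y)$, each with $e_i=2$, so by Proposition~\ref{det} one divides by $(x-y)^3(x-\zeta y)^3(x-\zeta^2 y)^3=(x^3-y^3)^3$ up to a nonzero constant. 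Carrying out the triple product, performing the division, and then substituting $x^3+y^3\mapsto X$ and $xy\mapsto Y$ — which is legitimate because, exactly as in the proof of Lemma~\ref{prim}, every surviving entry is $G$-invariant — yields the displayed matrix.

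For $(b)$, the equation is read off directly from Theorem~\ref{teeeo} with $d=6$: one has $F=Z^6-Q_5Z^5+Q_4Z^4-Q_3Z^3+Q_2Z^2-Q_1Z+Q_0$ with $Q_{6-k}=q_{6-k}(X,Y)$, the $q_{6-k}$ being the elementary symmetric functions in $h_1,\dots,h_6$ displayed just before the statement; Lemma~\ref{LEMMA} guarantees each $q_{6-k}$ is $G$-invariant, hence a polynomial in $x^3+y^3$ and $xy$, so the substitution $w^{\ast}$ makes sense. Equivalently, one may simply expand $\det\lambda[X,Y,Z]$ from part $(a)$.

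For $(c)$, I would invoke Proposition~\ref{princi}. The first task is to split the nontrivial elements of $D_6$ into reflections and rotations: the anti-diagonal matrices $g_2,g_5,g_6$ have eigenvalues $\pm1$ and one-dimensional fixed lines, so they are the reflections, with reflecting hyperplanes $\textbf{V}(x-y)$, $\textbf{V}(x-\zeta y)$ and $\textbf{V}(x-\zeta^2 y)$, while $g_3,g_4$ are the two nontrivial rotations. Hence $\mathcal{D}(f)=\textbf{V}\big(\Delta_{g_2}(h)\,\Delta_{g_5}(h)\,\Delta_{g_6}(h)\cdot(h-{g_3}_{\bullet}h)(h-{g_4}_{\bullet}h)\big)$. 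Then a direct computation — dividing each $h-{g_k}_{\bullet}h$ by the relevant linear form and using $x^3-y^3=(x-y)(x-\zeta y)(x-\zeta^2 y)$ — gives $\Delta_{g_2}(h)=-f_1$, $\Delta_{g_5}(h)\,\Delta_{g_6}(h)=f_2$ and $(h-{g_3}_{\bullet}h)(h-{g_4}_{\bullet}h)=3f_3$, the constant $3=(1-\zeta)(1-\zeta^2)$ being a unit. Multiplying these and discarding units gives $\mathcal{D}(f)=\textbf{V}(f_1f_2f_3)$.

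Finally, $(d)$ is the specialisation of Corollary~\ref{cormult} (equivalently of the bound~(\ref{eq3})) to $G=D_6$, whose degrees on $\mathbb{C}^2$ are $d_1=2$ and $d_2=3$: a singular generically $1$-to-$1$ reflected graph map germ has image multiplicity at least $d_1=2$ and at most $d_1d_2=|G|=6$; the same bounds can also be read off from part $(b)$, since $F$ is monic and reduced of degree $6$ in $Z$. I expect the genuine obstacle to be the computation in $(a)$: although Theorem~\ref{c3} reduces it to linear algebra, building $E$, computing its adjugate, executing the triple product and reducing every entry modulo $(x^3+y^3,xy)$ is a long and error-prone calculation; the delicate point in $(c)$ is pairing the five Demazure/difference factors correctly into $f_1,f_2,f_3$ and keeping track of the unit constants.
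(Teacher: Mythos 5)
Your proposal is correct and follows essentially the same route as the paper: part $(a)$ via Corollary \ref{matrix2} with the basis $1,x,x^2,y,y^2,y^3$, the same $E$, $A_Z$ and $\det(E)=c\,(x^3-y^3)^3$; part $(b)$ by Theorem \ref{teeeo}; part $(c)$ by Proposition \ref{princi}, splitting $g_2,g_5,g_6$ as the reflections (Demazure factors) and $g_3,g_4$ as the rotations (plain difference factors); and part $(d)$ by Corollary \ref{cormult} with degrees $2$ and $3$. Your extra bookkeeping in $(c)$ — pairing $\Delta_{g_2}(h)$, $\Delta_{g_5}(h)\Delta_{g_6}(h)$ and $(h-{g_3}_{\bullet}h)(h-{g_4}_{\bullet}h)$ with $f_1,f_2,f_3$ and tracking the unit $(1-\zeta)(1-\zeta^2)=3$ — is a slightly more explicit account of the calculation the paper leaves implicit.
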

	
	\begin{proof}$(a)$ Consider the basis $r_1=1$, $r_2=x$, $r_3=x^2$, $r_4=y$, $r_5=y^2$ and $r_6=y^3$ for $\mathcal{O}_2/(x^3+y^3,xy)$. Using the same notation as in Remark \ref{obs} we obtain that
		
		$$E=\begin{bmatrix}
			1 & 1 & 1 & 1 & 1 & 1 \\
			x & y & \zeta x & \zeta^2 x & \zeta y & \zeta^2y \\
			x^2 & y^2 & \zeta^2x^2 & \zeta x^2 & \zeta^2y^2 & \zeta y^2 \\
			y & x & \zeta^2 y & \zeta y & \zeta^2x & \zeta x \\
			y^2 & x^2 & \zeta y^2 & \zeta^2y^2 & \zeta x^2 & \zeta^2x^2 \\
			y^3 & x^3 & y^3 & y^3 & x^3 & x^3 \
		\end{bmatrix} \hspace{0.3cm} \textrm{and} \hspace{0.3cm} A_Z=\begin{bmatrix}
			h_1-Z & 0 & 0 & 0 & 0 & 0 \\
			0 & h_2-Z & 0 & 0 & 0 & 0 \\
			0 & 0 & h_3-Z & 0 & 0 & 0 \\
			0 & 0 & 0 & h_4-Z & 0 & 0 \\
			0 & 0 & 0 & 0 & h_5-Z & 0 \\
			0 & 0 & 0 & 0 & 0 & h_6-Z \
		\end{bmatrix}.$$
		
		The reflections of $D_6$ are $g_2,g_5$ and $g_6$ and their respective reflecting hyperplanes are $L_{H_1}=x-y$, $L_{H_2}=x-\zeta y$ and $L_{H_3}=x-\zeta^2 y$. Each reflection of $D_6$ has order $2$. By Proposition \ref{det} we obtain that $det(E)=27(x^3-y^3)^3$. The adjoint matrix of $E$ is
		
		$$adj(E)=9(x^3-y^3)^2\begin{bmatrix}
			x^3 & x^2 & x & -y^2 & -y & -1 \\
			-y^3 & -y^2 & -y & x^2 & x & 1 \\
			x^3 & \zeta^2x^2 & \zeta x & -\zeta y^2  & -\zeta^2y & -1 \\
			x^3 & \zeta x^2 & \zeta^2 x & -\zeta^2 y^2 & -\zeta y & -1 \\
			-y^3 & -\zeta^2 y^2 & -\zeta y & \zeta x^2 & \zeta^2 x & 1 \\
			-y^3 & -\zeta y^2 & -\zeta^2 y & \zeta^2 x^2 & \zeta x & 1 \
		\end{bmatrix}.$$
		
		Applying Corollary \ref{matrix2}, we obtain that a presentation matrix of $f_*({\cal O}_2)$ as a ${\cal O}_3$-module via $f$ is given by
		
		$$\lambda[X,Y,Z]=\begin{bmatrix}
			-Z & p_1 & p_2 & p_3 & p_4 & p_5 \\
			p_2X+p_3Y & -Z & p_1 & p_4Y & p_5Y & -p_2 \\
			p_1X+p_4Y^2 & p_2X+p_3Y & -Z & p_5Y^2 & -p_2Y & -p_1 \\
			p_1Y & p_2Y & -p_5Y & p_5X-Z & p_3 & p_4 \\
			p_2Y^2 & -p_5Y^2 & -p_4Y & p_4X+p_1Y & p_5X-Z & p_3 \\
			-p_5Y^3 & -p_4Y^2 & -p_3Y & p_3X+p_2Y^2 & p_4X+p_1Y & p_5X-Z \
		\end{bmatrix}$$
		as desired. \\

	 $(b)$ It follows by Theorem \ref{teeeo}.\\ 
		
		To prove $(c)$, we can apply Proposition \ref{princi} to obtain
		
		\begin{equation*}
			D(f)=\textbf{V}\left( \dfrac{(h-{g_2}_{\bullet}h)(h-{g_3}_{\bullet}h)(h-{g_4}_{\bullet}h)(h-{g_5}_{\bullet}h)(h-{g_6}_{\bullet}h)}{3(x-y)(x-\zeta y)(x-\zeta^2y)}\right).
		\end{equation*}
		
		Note that 
		
		$$\begin{array}{rcl} 
			h-{g_2}_{\bullet}h & = & (y-x)(-p_1+p_3+p_2(x+y)+p_4(x+y)-p_5(x^2+xy+y^2)). \\  
			h-{g_3}_{\bullet}h & = & p_1(x-\zeta^2x)+p_2(x^2-\zeta x^2)+p_3(y-\zeta y)+p_4(y^2-\zeta^2y^2). \\
			h-{g_4}_{\bullet}h & = & p_1(x-\zeta x)+p_2(x^2-\zeta^2x^2)+p_3(y-\zeta^2y)+p_4(y^2-\zeta y^2). \\
			h-{g_5}_{\bullet}h & = & p_1(x-\zeta y)+p_2(x^2-\zeta^2y^2)+p_3(y-\zeta^2x)+p_4(y^2-\zeta x^2)+p_5(y^3-x^3).\\
			h-{g_6}_{\bullet}h & = & p_1(x-\zeta^2y)+p_2(x^2-\zeta y^2)+p_3(y-\zeta x)+p_4(y^2-\zeta^2x^2)+p_5(y^3-x^3). 
		\end{array}$$
		
		Therefore, a calculation shows that $D(f)=\textbf{V}\left(f_1f_2f_3\right)$.\\ 
		
		The proof of $(d)$ follows by Theorem \ref{teo5.2} and Remark \ref{cormult}.\end{proof}
	
	\begin{remark} A straightforward (but tedious) calculation can be done to present the $Q_{i}'s$ coefficients in Proposition \ref{dihedralprop}(b) explicitly. For instance, 
		
		\begin{center}
			$q_5=h_1+h_2+h_3+h_4+h_5+h_6=3p_5(x^3+y^3)$, and\\
			$q_4=-6xyp_1p_3-3p_1p_2(x^3+y^3)-3p_3p_4(x^3+y^3)-6x^2y^2p_2p_4+3p_5^2(x^3+y^3)^2+3p_5^2x^3y^3$
		\end{center}

		\noindent therefore $Q_5(X,Y)=3Xp_5$ and $Q_4(X,Y)=-6Yp_1p_3-3Xp_1p_2-3Xp_3p_4-6Y^2p_2p_4+3X^2p_5^2+3Y^3p_5^2$. In the same way, we obtain from $q_3,q_2,q_1$ and $q_0$ that\\
		
		$Q_3(X,Y)=-X(-p_1^3-p_3^3-Xp_2^3-Xp_4^3+9Xp_1p_2p_5+3Xp_3p_4p_5-X^2p_5^3)-XY(-3p_2^2p_3-3p_1p_4^2+12p_1p_3p_5+12Y^2p_2p_4p_5-6Y^2p_5^3)-Y^2(-6p_2p_3^2-6p_1^2p_4+2Yp_2^3+2Yp_4^3-12Yp_1p_2p_5+12Yp_3p_4p_5)$. \\
		
		$Q_2(X,Y)=9XYp_1^2p_2p_3+9Y^2p_1^2p_3^2+9X^2p_1p_2p_3p_4+9XYp_1p_3^2p_4+3X^2p_1^3p_5+9Y^3p_1^2p_2^2+9XY^2p_1p_2^2p_4+9XY^2p_2p_3p_4^2+9Y^3p_3^2p_4^2-6Y^3p_1^3p_5+3X^3p_2^3p_5+9X^2Yp_2^2p_3p_5+9XY^2p_2p_3^2p_5+6Y^3p_3^3p_5+9XY^2p_1^2p_4p_5-9X^3p_1p_2p_5^2-9X^2Yp_1p_3p_5^2+9Y^4p_2^2p_4^2-9XY^3p_2^3p_5-18Y^4p_2^2p_3p_5+18Y^4p_1p_4^2p_5+3XY^3p_4^3p_5+18XY^3p_1p_2p_5^2 \linebreak -9X^2Y^2p_2p_4p_5^2-18XY^3p_3p_4p_5^2+3X^2Y^3p_5^4+3Y^6p_5^4 $. \\
		
		%$Q_1(X,Y)=-[3XYp_1^4p_3+3X^2p_1p_2p_3^3+3XYp_1p_3^4+3X^2p_1^3p_3p_4+6Y^3p_1^4p_2+3X^2Yp_1p_2^3p_3+18XY^2p_1p_2^2p_3^2+12Y^3p_1p_2p_3^3+12XY^2p_1^3p_2p_4+12Y^3p_1^3p_3p_4+3X^3p_2^3p_3p_4+9X^2Yp_2^2p_3^2p_4+12XY^2p_2p_3^3p_4+6Y^3p_3^4p_4+9X^2Yp_1^2p_2p_4^2+18XY^2p_1^2p_3p_4^2+3X^3p_1p_2p_4^3+3X^2Yp_1p_3p_4^3-9X^2Yp_1^2p_2p_3p_5-9XY^2p_1^2p_3^2p_5-9X^3p_1p_2p_3p_4p_5\linebreak -9X^2Yp_1p_3^2p_4p_5-3X^3p_1^3p_5^2+3XY^3p_1p_2^4+12Y^4p_1p_2^3p_3+3X^2Y^2p_2^4p_4+12Y^4p_1p_3p_4^3+3X^2Y^2p_2p_4^4+3XY^3p_3p_4^4-9XY^3p_1^2p_2^2p_5-9X^2Y^2p_1p_2^2p_4p_5-9X^2Y^2p_2p_3p_4^2p_5-9XY^3p_3^2p_4^2p_5+9XY^3p_1^3p_5^2-3X^4p_2^3p_5^2-9X^3Yp_2^2p_3p_5^2-9X^2Y^2p_2p_3^2p_5^2-3XY^3p_3^3p_5^2-9X^2Y^2p_1^2p_4p_5^2+3X^4p_1p_2p_5^3+3X^3Yp_1p_3p_5^3-6Y^5p_2^4p_4-6Y^5p_2p_4^4-9XY^4p_2^2p_4^2p_5+12X^2Y^3p_2^3p_5^2+27XY^4p_2^2p_3p_5^2-9XY^4p_1p_4^2p_5^2-3X^2Y^3p_1p_2p_5^3+3X^3Y^2p_2p_4p_5^3+3X^2Y^3p_3p_4p_5^3-6Y^6p_2^3p_5^2-6Y^6p_4^3p_5^2-12Y^6p_1p_2p_5^3+12Y^6p_3p_4p_5^3-3XY^6p_5^5+18Y^5p_2p_3^2p_5^2+18Y^5p_1^2p_4p_5^2$. \\
		
		$Q_1(X,Y)=-[3XYp_1^4p_3+3X^2p_1p_2p_3^3+3XYp_1p_3^4+3X^2p_1^3p_3p_4+6Y^3p_1^4p_2+3X^2Yp_1p_2^3p_3+18XY^2p_1p_2^2p_3^2+12Y^3p_1p_2p_3^3+12XY^2p_1^3p_2p_4+12Y^3p_1^3p_3p_4+3X^3p_2^3p_3p_4+9X^2Yp_2^2p_3^2p_4+12XY^2p_2p_3^3p_4+6Y^3p_3^4p_4+9X^2Yp_1^2p_2p_4^2+18XY^2p_1^2p_3p_4^2+3X^3p_1p_2p_4^3+3X^2Yp_1p_3p_4^3-9X^2Yp_1^2p_2p_3p_5-9XY^2p_1^2p_3^2p_5-9X^3p_1p_2p_3p_4p_5\linebreak -9X^2Yp_1p_3^2p_4p_5-3X^3p_1^3p_5^2+3XY^3p_1p_2^4+12Y^4p_1p_2^3p_3+3X^2Y^2p_2^4p_4+12Y^4p_1p_3p_4^3+3X^2Y^2p_2p_4^4+3XY^3p_3p_4^4-9XY^3p_1^2p_2^2p_5-9X^2Y^2p_1p_2^2p_4p_5-9X^2Y^2p_2p_3p_4^2p_5-9XY^3p_3^2p_4^2p_5+9XY^3p_1^3p_5^2-3X^4p_2^3p_5^2-9X^3Yp_2^2p_3p_5^2-9X^2Y^2p_2p_3^2p_5^2-3XY^3p_3^3p_5^2-9X^2Y^2p_1^2p_4p_5^2+3X^4p_1p_2p_5^3+3X^3Yp_1p_3p_5^3-6Y^5p_2^4p_4-6Y^5p_2p_4^4-9XY^4p_2^2p_4^2p_5+12X^2Y^3p_2^3p_5^2+27XY^4p_2^2p_3p_5^2-9XY^4p_1p_4^2p_5^2-3X^2Y^3p_1p_2p_5^3+3X^3Y^2p_2p_4p_5^3+3X^2Y^3p_3p_4p_5^3-6Y^6p_2^3p_5^2-6Y^6p_4^3p_5^2-12Y^6p_1p_2p_5^3+12Y^6p_3p_4p_5^3-3XY^6p_5^5+18Y^5p_2p_3^2p_5^2+18Y^5p_1^2p_4p_5^2]$. \\
		
		\noindent and
		
		\begin{flushleft}
			$Q_0(X,Y)=X^2p_1^3p_3^3+Y^3p_1^6+3XY^2p_1^3p_2p_3^2-2Y^3p_1^3p_3^3+X^3p_2^3p_3^3+3X^2Yp_2^2p_3^4+3XY^2p_2p_3^5+Y^3p_3^6+3XY^2p_1^5p_4+3XY^2p_1^2p_3^3p_4+3X^2Yp_1^4p_4^2+X^3p_1^3p_4^3-3X^2Yp_1^4p_3p_5-3X^3p_1^3p_3p_4p_5+XY^3p_1^3p_2^3+6Y^4p_1^3p_2^2p_3+3X^2Y^2p_2^4p_3^2+6XY^3p_2^3p_3^3+3Y^4p_2^2p_3^4+3X^2Y^2p_1^2p_2^3p_4+9XY^3p_1^2p_2^2p_3p_4+18Y^4p_1^2p_2p_3^2p_4+3Y^4p_1^4p_4^2+3X^3Yp_1p_2^3p_4^2+9X^2Y^2p_1p_2^2p_3p_4^2+9XY^3p_1p_2p_3^2p_4^2+6Y^4p_1p_3^3p_4^2+6XY^3p_1^3p_4^3+X^4p_2^3p_4^3+3X^3Yp_2^2p_3p_4^3+3X^2Y^2p_2p_3^2p_4^3+XY^3p_3^3p_4^3+3X^2Y^2p_1^2p_4^4-3XY^3p_1^4p_2p_5+6Y^4p_1^4p_3p_5-3X^3Yp_1p_2^3p_3p_5-9X^2Y^2p_1p_2^2p_3^2p_5-12XY^3p_1p_2p_3^3p_5-6Y^4p_1p_3^4p_5-3X^2Y^2p_1^3p_2p_4p_5-3X^4p_2^3p_3p_4p_5-9X^3Yp_2^2p_3^2p_4p_5-9X^2Y^2p_2p_3^3p_4p_5-3XY^3p_3^4p_4p_5-9X^2Y^2p_1^2p_3p_4^2p_5+X^4p_1^3p_5^3+3XY^4p_2^5p_3+3Y^5p_2^4p_3^2-6Y^5p_1^2p_2^3p_4-9XY^4p_1p_2^3p_4^2 -18Y^5p_1p_2^2p_3p_4^2-4X^2Y^3p_2^3p_4^3-9XY^4p_2^2p_3p_4^3-6Y^5p_2p_3^2p_4^3+3Y^5p_1^2p_4^4+3XY^4p_1p_4^5-3X^2Y^3p_1p_2^4p_5-12Y^5p_1^3p_2p_4p_5-3X^3Y^2p_2^4p_4p_5+3X^2Y^3p_2^3p_3p_4p_5+18XY^4p_2^2p_3^2p_4p_5+12Y^5p_2p_3^3p_4p_5-18XY^4p_1^2p_2p_4^2p_5-3X^2Y^3p_1p_2p_4^3p_5-12XY^4p_1p_3p_4^3p_5+9XY^4p_1^2p_2p_3p_5^2+9Y^5p_1^2p_3^2p_5^2+9X^2Y^3p_1p_2p_3p_4p_5^2+9XY^4p_1p_3^2p_4p_5^2-4X^2Y^3p_1^3p_5^3+X^5p_2^3p_5^3+3X^4Yp_2^2p_3p_5^3+3X^3Y^2p_2p_3^2p_5^3+X^2Y^3p_3^3p_5^3+3X^3Y^2p_1^2p_4p_5^3+Y^6p_2^6+2Y^6p_2^3p_4^3+Y^6p_4^6+6Y^6p_1p_2^4p_5+9XY^5p_2^4p_4p_5+12Y^6p_2^3p_3p_4p_5-12Y^6p_1p_2p_4^3p_5-3XY^5p_2p_4^4p_5-6Y^6p_3p_4^4p_5+9Y^6p_1^2p_2^2p_5^2+9XY^5p_1p_2^2p_4p_5^2+9XY^5p_2p_3p_4^2p_5^2+9Y^6p_3^2p_4^2p_5^2+2Y^6p_1^3p_5^3-5X^3Y^3p_2^3p_5^3-12X^2Y^4p_2^2p_3p_5^3-9XY^5p_2p_3^2p_5^3-2Y^6p_3^3p_5^3-9XY^5p_1^2p_4p_5^3+3X^2Y^4p_1p_4^2p_5^3-3X^3Y^3p_1p_2p_5^4-3X^2Y^4p_1p_3p_5^4+9Y^7p_2^2p_4^2p_5^2+5XY^6p_2^3p_5^3+6Y^7p_2^2p_3p_5^3-6Y^7p_1p_4^2p_5^3+XY^6p_4^3p_5^3+9XY^6p_1p_2p_5^4+6Y^7p_1p_3p_5^4-3X^2Y^5p_2p_4p_5^4 -3XY^6p_3p_4p_5^4+6Y^8p_2p_4p_5^4+Y^9p_5^6$.
		\end{flushleft}
		
	\end{remark}
	
	\subsection{Quasihomogeneous reflection maps}
	
	$ \ \ \ \ $ Another application of our results is about quasihomogeneous map germs. A polynomial $p(x_1,\cdots,x_n)$ is \textit{quasihomogeneous} if there are positive coprime integers $b_1,\cdots,b_n$, and an integer $d$ such that
	 
\begin{center}
	 $p(k^{b_1}x_1,\cdots,k^{b_n}x_x)=k^dp(x_1,\cdots,x_n)$. 
\end{center}
	 
	 The number $b_i$ is called the weight of the variable $x_i$ and $d$ is called the weighted degree of $p$. In this case, we say $p$ is of type $(d; b_1,\cdots,b_n)$. This definition extends to polynomial map germs $f:(\mathbb{C}^n,0)\rightarrow (\mathbb{C}^p,0)$ by just requiring each coordinate function $f_i$ to be quasihomogeneous of type $(d_i; b_1,\cdots,b_n)$, for fixed weights $b_1,\cdots,b_n$. In particular, for a quasihomogeneous map germ $f:(\mathbb{C}^2,0)\rightarrow (\mathbb{C}^3,0)$ we say that it is quasihomogeneous of type $(d_1,d_2,d_3; b_1,b_2)$.\\
	
	Marar and Nuño-Ballesteros studied in \cite{[3]} the case where $f:(\mathbb{C}^2,0) \longrightarrow (\mathbb{C}^3,0)$ is a corank $2$ quasihomogeneous and finitely determined map germ. They say that the mere existence of this kind of maps is quite a surprise. Indeed, the three adjectives create tremendous restrictions and examples seem hard to find. They showed that if $f=(x^2,y^2,h(x,y))$ is finitely determined, then $f$ is in fact homogeneous, i.e. $b_1=b_2=1$ (see \cite[Th. 3.4]{[3]}). In particular, there is no finitely determined quasihomogeneous double fold map germ with distinct weights. On the other hand, examples of finitely determined homogeneous (where $b_1=b_2=1$) reflected graph map germs exist, see for instance \cite[Example 3.6]{[3]} and \cite[Example 16]{[5]}. Thus, we can consider the following question:\\ 
	
	\noindent \textbf{Question:} \textit{Is there any corank $2$ reflected graph map germ $f=(w_1,w_2,h)$ from $(\mathbb{C}^2,0)$ to $(\mathbb{C}^3,0)$ such that $f$ is finitely determined and quasihomogeneous with distinct weights?}\\
	
	Since the coordinate functions $w_1$ and $w_2$ to the orbit map $w=(w_1,w_2)$ of $G$ are always homogeneous (see \cite[Ch. 9]{[2]}), we will restrict ourselves to studying this question only for the group $\mathbb{Z}_r \times \mathbb{Z}_s$, where the orbit map $w=(x^r,x^s)$ of $\mathbb{Z}_r \times \mathbb{Z}_s$ is a quasihomogeneous map. If $f(x,y)=(x^r,y^s,h(x,y))$ then the corank $2$ hypothesis implies that $r,s\geq 2$ and $h \in \textbf{m}^2$, where $\textbf{m}$ denotes the maximal ideal of $\mathcal{O}_2$. The following lemma can be viewed as an extension of \cite[Th. 3.4]{[3]} for the reflection group $\mathbb{Z}_r \times \mathbb{Z}_s$. We will consider $\mathbb{Z}_r \times \mathbb{Z}_s$ as a subgroup of $GL_2(\mathbb{C})$ generated by the reflections

	$$ R^{'}= \begin{bmatrix}
		\theta & 0  \\
		0 & 1 \\
	\end{bmatrix}, \ \ \ \ \ S^{'} = \begin{bmatrix} 1 & 0  \\
		0 & \xi \\
	\end{bmatrix}.$$
	
	\noindent where $\theta$ (respectively $\xi$) is a primitive $\textbf{r}$-th (respectively, $\textbf{s}$-th) root of unity.\\
	
	Denote the elements of $\mathbb{Z}_r \times \mathbb{Z}_s$ by $g_{i,j}$ where $i \in \lbrace 1,\cdots, r \rbrace$ and $j \in \lbrace 1,\cdots, s \rbrace$, with $g_{r,s}=Id$, the identity matrix. Note that after an eventual reordering of the indices $i,j$ we have that ${g_{i,j}}_{\bullet}h(x,y)=h(\theta^i x,\xi^j y)$.
	
	\begin{lemma}\label{homo} Let $f:(\mathbb{C}^2,0) \longrightarrow (\mathbb{C}^3,0)$ be a reflected graph map germ given by $f(x,y)=(x^r,y^s,h(x,y))$ with $r,s\geq 2$ and $h \in \textbf{m}^2$. If $f$ is quasihomogeneous and finitely determined then $f$ is homogeneous.
	\end{lemma}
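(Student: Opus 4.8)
The plan is to argue by contradiction: assuming $f$ is finitely determined and quasihomogeneous of type $(rb_1,\,sb_2,\,d_3;\,b_1,b_2)$ with $b_1\neq b_2$, we show that the double point curve $\mathcal{D}(f)$ supplied by Proposition \ref{princi} cannot be reduced. We may assume $\gcd(b_1,b_2)=1$ (if $b_1=b_2$ then $b_1=b_2=1$ and $f$ is already homogeneous), and after possibly renaming coordinates that $b_1<b_2$; note this forces $b_2\ge 2$. We use the criterion (see \cite{[3]}, after Marar--Mond) that a finite, generically one-to-one germ $(\mathbb{C}^2,0)\to(\mathbb{C}^3,0)$ of corank $\le 2$ is finitely determined if and only if $\mathcal{D}^2(f)$, equivalently $\mathcal{D}(f)$, is a reduced curve, together with the fact that a finitely determined corank-$2$ germ is automatically generically one-to-one. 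Writing the elements of $\mathbb{Z}_r\times\mathbb{Z}_s$ as ${g_{i,j}}_{\bullet}(x,y)=(\theta^{i}x,\xi^{j}y)$, with reflecting hyperplanes $\{x=0\}$ (fixed by the $g_{i,1}$) and $\{y=0\}$ (fixed by the $g_{1,j}$), Proposition \ref{princi} expresses $\mathcal{D}(f)$ as $\textbf{V}$ of the product of the Demazure factors $\Delta_{g_{i,1}}(h)$ $(2\le i\le r)$ and $\Delta_{g_{1,j}}(h)$ $(2\le j\le s)$ and the $(r-1)(s-1)$ difference factors $h-{g_{i,j}}_{\bullet}h$ $(i,j\ge 2)$; it therefore suffices to show this product has a repeated irreducible factor or vanishes identically.

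The first step is to read off the shape of $h$ from quasihomogeneity: its Newton support lies on the line $ab_1+bb_2=d_3$, and since $\gcd(b_1,b_2)=1$ any two support points differ by a multiple of $(b_2,-b_1)$, so $h=x^{a_0}y^{b_0}\,\widetilde h(x^{b_2},y^{b_1})$ for some polynomial $\widetilde h$; in particular every $x$-exponent occurring in $h$ lies in $a_0+b_2\mathbb{Z}$. Now case-split. If $x\mid h$, write $h=x\,h'$; a direct computation gives $x\mid\Delta_{g_{1,j}}(h)$ for every $j=2,\dots,s$ and $x\mid(h-{g_{i,j}}_{\bullet}h)$ for every $i,j\ge 2$, so $x^{\,r(s-1)}$ divides the defining polynomial of $\mathcal{D}(f)$, and $r(s-1)\ge 2$, so it is non-reduced. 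The case $y\mid h$ is symmetric. Otherwise $x\nmid h$ and $y\nmid h$, so $a_0=b_0=0$ and $h=c_0\prod_{\nu=1}^{N}(x^{b_2}-\mu_\nu y^{b_1})$ with all $\mu_\nu\neq 0$ and $x$-support of $h$ contained in $b_2\mathbb{Z}$. For each $i=2,\dots,r$, either $h$ is $g_{i,1}$-invariant, whence $\Delta_{g_{i,1}}(h)\equiv 0$ and we are done, or else the constant-in-$x$ term of $h-{g_{i,1}}_{\bullet}h$ is killed, so $x^{b_2}$ divides $h-{g_{i,1}}_{\bullet}h$ and $x^{\,b_2-1}$ divides $\Delta_{g_{i,1}}(h)$. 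Thus, unless some factor vanishes, $x^{(r-1)(b_2-1)}$ divides the product; if $b_2\ge 3$, or if $b_2=2$ and $r\ge 3$, this exponent is $\ge 2$ and $\mathcal{D}(f)$ is non-reduced. The last configuration is $b_2=2$, $r=2$, which forces $b_1=1$; but then $h=c_0\prod_\nu(x^2-\mu_\nu y)$ involves only even powers of $x$, so it is invariant under $g_{2,1}:(x,y)\mapsto(-x,y)$, whence $\Delta_{g_{2,1}}(h)\equiv 0$ and the defining polynomial of $\mathcal{D}(f)$ vanishes identically. In every case $\mathcal{D}(f)$ fails to be a reduced curve, contradicting finite determinacy; hence $b_1=b_2=1$ and $f$ is homogeneous.

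The two points needing care are: (i) the elementary lattice argument pinning $h$ to the form $x^{a_0}y^{b_0}\widetilde h(x^{b_2},y^{b_1})$, and (ii) the bookkeeping of divisibility by powers of $x$ (and symmetrically $y$) through the operators $\Delta_{g_{i,1}}$ and the differences $h-{g_{i,j}}_{\bullet}h$; this is exactly where \cite[Th. 3.4]{[3]} did the computation by hand for $r=s=2$, and the present proof carries out the same idea with the extra roots of unity $\theta,\xi$ in play. The only genuinely delicate branch is $b_2=2,\ r=2$, and the main obstacle is spotting that there quasihomogeneity already forces $h$ to be invariant under the reflection $g_{2,1}$, so that its Demazure factor is identically zero and no sharper estimate is needed; once this is noticed the argument closes uniformly. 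One should also remark at the outset that whenever some $\Delta_{g_k}(h)$ or $h-{g_k}_{\bullet}h$ vanishes identically the map $f$ is not generically one-to-one, hence not finitely determined, which is what licenses the use of the criterion above throughout.
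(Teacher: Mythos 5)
Your proof is correct, and it follows a genuinely different route from the paper's in two of the three steps. Both arguments share the same backbone: Proposition \ref{princi} gives the defining equation of ${\cal D}(f)$ as a product over $G\setminus\{Id\}$, finite determinacy forces this curve to be reduced, and the quasihomogeneity of $h$ (with $\gcd$ of the weights equal to $1$) pins its Newton support to an arithmetic progression, which yields the common estimate that $x^{(r-1)(b-1)}y^{(s-1)(a-1)}$ divides the product (here $b$ is the weight of $y$ and $a$ the weight of $x$). The differences are these. First, to kill a monomial factor $x^{\alpha}y^{\beta}$ of $h$, the paper splits into cases, using the external fact that $f|_{\textbf{V}(y)}$ is $r$-to-$1$ (forcing $r\le 2$) when $r,s\ge 3$, and a $y$-divisibility count when $r=2$; you instead handle $x\mid h$ uniformly by observing that $x$ divides each of the $(s-1)+(r-1)(s-1)=r(s-1)\ge 2$ factors $\Delta_{g_{1,j}}(h)$ and $h-{g_{i,j}}_{\bullet}h$, which is cleaner and needs no auxiliary lemma. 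Second, in the borderline case $r=2$ with the weight of $y$ equal to $2$ (where the divisibility estimate only gives exponent $1$), the paper resorts to Mond's cross-cap formula and a parity argument showing $C(f)\notin\mathbb{Z}$; you instead notice that quasihomogeneity then forces $h$ to contain only even powers of $x$, so $f(-x,y)=f(x,y)$, the factor $\Delta_{g_{2,1}}(h)$ vanishes identically, and $f$ is generically $2$-to-$1$, hence not finitely determined. This is simpler, avoids the external formula, and in fact also explains \emph{why} that case fails. Your closing remark — that an identically vanishing factor already contradicts generic injectivity — is exactly the right safeguard for the degenerate branches. The only stylistic caveat is that your Case 1 count $r(s-1)$ silently discards the factors $\Delta_{g_{i,1}}(h)$, but since $r(s-1)\ge 2$ already suffices, nothing is lost.
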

	
	\begin{proof} The case where $r,s=2$ was considered in \cite[Th. 3.4]{[3]}. Therefore, we can suppose that $(r,s) \neq (2,2)$. Denote the weight of $x$ by $a$ and the weight of $y$ by $b$. By hypothesis we have that $f$ is quasihomogeneous, therefore we can write $h$ in the form
		\begin{equation}\label{eq4}
			h(x,y)=x^{\alpha}y^{\beta}(c_k(x^b)^k+c_{k-1}(x^b)^{k-1}y^a+\cdots+c_1x^b(y^a)^{k-1}+c_0(y^a)^k)
		\end{equation}
		
		\noindent for some non-negative integers $\alpha,\beta,k$, where $c_0,\cdots,c_k \in \mathbb{C}$ and $c_0,c_k \neq 0$. We will show that $\alpha,\beta = 0$. Since $\det(jac(w))=rsx^{r-1}y^{s-1}$ then by Proposition \ref{princi} we obtain that
		\begin{equation}\label{23}
			{D}(f)=\textbf{V}\left( \displaystyle \dfrac{ \prod_{(i,j)\neq (r,s)}\left(h-{g_{i,j}}_{\bullet}h\right) }{x^{r-1}y^{s-1}}\right).
		\end{equation}
		
		Suppose that $\beta \neq 0$, then the restriction of $f$ to $\textbf{V}(y)$ is $r$-to-$1$ (see \cite[Lemma 6.1]{[6b]}). Since $f$ is finitely determined, it follows that $1\leq r \leq 2$. If we suppose that $\alpha \neq 0$, we obtain with a similar argument that $1 \leq s \leq 2$. Hence, if $r,s\geq 3$ then $\alpha=\beta =0$. Let us consider the following remaining cases:\\

		\noindent\textbf{Case a.1:} $r=2$ and $s\geq 3$.\\
		
		Since $s\geq 3$ we have that $\alpha=0$. Suppose that $\beta\geq 1$, then by (\ref{eq4}) and (\ref{23}) we obtain that
		
		\begin{center}
			${D}(f)=\textbf{V}(y^{\beta(2s-1)-(s-1)}\lambda_1(x,y))$
		\end{center}
		
		\noindent for some $\lambda_1(x,y)$ in $\mathcal{O}_2$. Now, note that $\beta(2s-1)-(s-1)=\beta s + \beta(s-1)-(s-1)\geq 3$. Therefore, $D(f)$ is not reduced, thus it follows by \cite[Cor. 3.5]{[11]} that $f$ is not finitely determined, a contradiction. Hence, we obtain that $\beta=0$. \\
		
		\noindent\textbf{Case a.2:} $s=2$ and $r\geq 3$. The proof of this case is similar to the one given to show Case a.1.\\
		
		Finally, we will show that the weights of $f$ are equal, i.e., $a=b=1$. Since $\alpha=\beta=0$ then by (\ref{eq4}) and (\ref{23}) we obtain that
		
		\begin{center}
			$D(f)=\textbf{V}(x^{b(r-1)-(r-1)}y^{a(s-1)-(s-1)}\lambda_2(x,y))$
		\end{center}
		
		\noindent for some $\lambda_2(x,y)$ in $\mathcal{O}_2$. Therefore, since $f$ is finitely determined it follows that 
		
		\begin{equation}\label{eq5}
			0\leq (r-1)(b-1)\leq 1 \ \ \ and \ \ \ 0 \leq (s-1)(a-1)\leq 1.
		\end{equation}
		
		From (\ref{eq5}), we obtain that if $r\geq 3$ then $b=1$. On the other hand, if $s\geq 3$ then $a=1$. Hence, if $r,s\geq 3$ then $a=b=1$. Let us consider the following remaining cases:\\

		\noindent\textbf{Case b.1:} $r=2$ and $s\geq 3$.\\
		
		Note that in this case $f=(f_1,f_2,f_3)=(x^2,y^s,c_k(x^b)^k+\cdots+c_0(y^a)^k)$ with $c_0,c_k \neq 0$. From (\ref{eq5}) we obtain that $a=1$ and $1\leq b \leq 2$. Suppose that $b=2$, then the weighted degree of $f_1,f_2$ and $f_3$ are $2,2s$ and $2k$. By \cite[Prop. 1.4]{mondformulas} we obtain that the number $C(f)$ of cross-caps of $f$ is given by
		
		\begin{equation}\label{eq6}
			C(f)=\dfrac{1}{2}\left((2s-1)(2k-2)+(2s-1) \right)
		\end{equation}
		
		\noindent which is not an integer number, since the numerator of (\ref{eq6}) is an odd integer number. In particular, this implies that $f$ is not finitely determined, a contradiction. Therefore, we obtain that $a=b=1$.\\
		
		\noindent\textbf{Case b.2:} $s=2$ and $r\geq 3$. The proof of this case is similar to the one given to show Case b.1.\end{proof}\\
	
	We note that for the group $\mathbb{Z}_1 \times \mathbb{Z}_d$, where $\mathbb{Z}_1$ denotes the trivial group, it is not hard to find quasihogeneous finitely determined map germs which are not homogeneous. For instance, $f(x,y)=(x,y^4,y^6+xy)$ is an example of a corank $1$ finitely determined map germ which is quasihomogeneous of type $(5,4,6; 1,5)$.
	
	\begin{remark}  All figures used in this work were created by the authors using the software Surfer \rm\cite{surfer}.
	\end{remark}
	
\section*{Acknowlegments}

$ \ \ \ \ $ This work constitutes a part of the first author's Ph.D. thesis at the Federal University of Paraíba under the supervision of Otoniel Nogueira da Silva to whom he would like to express his deepest gratitude. We would like to express our sincere gratitude to the referees for their valuable comments and suggestions, which have significantly improved the quality and clarity of this work. The authors would like to thank Juan José Nuño-Ballesteros and Guillermo Peñafort-Sanchis for many valuable comments on this work. Milena Barbosa Gama acknowledges support by CAPES (Coordenação de Aperfeiçoamento de Pessoal de Nível Superior). Otoniel Nogueira da Silva acknowledges support by CNPq (Conselho Nacional de Desenvolvimento Científico e Tecnológico) grant Universal 407454/2023-3.

%\section*{Conflict of interest statement}

%$ \ \ \ \ $ On behalf of all authors, the corresponding author states that there is no conflict of interest.
	
	\small

	\begin{flushleft}
		$\bullet$ Gama, M.B.\\
		\textit{milena.gama@academico.ufpb.br}\\
		Universidade Federal da Paraíba, 58.051-900, João Pessoa, PB, Brazil.\\
		
		$ \ \ $\\
		
		$\bullet$ Silva, O.N.\\
		\textit{otoniel.silva@academico.ufpb.br}\\
		Universidade Federal da Paraíba, 58.051-900, João Pessoa, PB, Brazil.	
	\end{flushleft}

%\begin{flushleft}
%	\textbf{Data availability statement}
%	\end{flushleft}	
	
%\begin{flushleft}
	
%	Data available with the article or supplementary information:\\

%The authors declare that the data supporting the findings of this study are available within the article itself and in the cited bibliography. Access information for the cited bibliography is described in the bibliography itself. There is no supplementary data that requires a specific link or repository to be cited in this section.
%	\end{flushleft}	
	
\end{document}